\documentclass{amsart}
\usepackage[in]{fullpage}
\usepackage{enumerate}

%\urlstyle{sf}

% % Author-added packages

\usepackage{amsmath,amssymb,amsthm,framed,graphicx,latexsym,youngtab,epstopdf,booktabs}
\usepackage{fourier}

\setcounter{MaxMatrixCols}{12}

\usepackage{color}
\definecolor{Chocolat}{rgb}{0.36, 0.2, 0.09}
\definecolor{BleuTresFonce}{rgb}{0.215, 0.215, 0.36}
\definecolor{EgyptianBlue}{rgb}{0.06, 0.2, 0.65}

\usepackage[colorlinks,final]{hyperref}
\hypersetup{citecolor=BleuTresFonce, urlcolor=EgyptianBlue, linkcolor=Chocolat}

\def\calA{\mathcal{A}}

\def\calC{\mathcal{C}}

\def\calG{\mathcal{G}}

\def\calJ{\mathcal{J}}

\def\calN{\mathcal{N}}
\def\calO{\mathcal{O}}
\def\calP{\mathcal{P}}

\def\calR{\mathcal{R}}

\def\calT{\mathcal{T}}

\DeclareMathOperator{\rank}{rank}

\DeclareMathOperator{\Com}{Com}
\DeclareMathOperator{\Lie}{Lie}

\DeclareMathOperator{\Poisson}{Poisson}
\DeclareMathOperator{\Leib}{Leib}
\DeclareMathOperator{\Zinb}{Zinb}

\renewcommand{\arg}{\ast}

\allowdisplaybreaks

%% Define theorem-like environments as usual:

\theoremstyle{plain}
 \newtheorem{theorem}{Theorem}[section]
 \newtheorem{lemma}{Lemma}[section]
 \newtheorem{proposition}{Proposition}[section]

 \newtheorem*{maintheorem}{Main Theorem}
 \newtheorem*{niltheorem}{Nilpotency Theorem}

\theoremstyle{definition}
\newtheorem{definition}{Definition}[section]
\newtheorem{notation}{Notation}[section]
\newtheorem{example}{Example}[section]
\newtheorem{remark}{Remark}[section]
\newtheorem{algorithm}{Algorithm}[section]
\newtheorem{problem}{Problem}[section]

\numberwithin{equation}{section}

\binoppenalty=10000
\relpenalty=10000

\overfullrule=1mm

\title{Classification of regular parametrized one-relation operads}

\author{Murray Bremner}

\address{Department of Mathematics and Statistics, University of Saskatchewan, 106 Wiggins Road, S7N 5E6, Saskatoon, Canada}
\email{bremner@math.usask.ca}

\author{Vladimir Dotsenko}
\address{School of Mathematics, Trinity College Dublin, College Green, Dublin 2, Ireland and Departamento de Matem\'aticas, CINVESTAV-IPN, Av. Instituto Polit\'ecnico Nacional 2508, Col. San Pedro Zacatenco, M\'exico, D.F., CP 07360, Mexico}
\email{vdots@maths.tcd.ie}

\thanks{The collaboration of the authors on this project was partially supported by the first author's
	Discovery Grant from NSERC, the Natural Sciences and Engineering Research Council of Canada.}
\dedicatory{In memoriam JLL (1946--2012)}

\subjclass[2010]{Primary 18D50,
Secondary 13B25, 13P10, 13P15, 15A54, 17-04, 17A30, 17A50, 20C30, 68W30}

\keywords{Parametrized one-relation algebras, algebraic operads, Koszul duality, representation theory
of the symmetric group, determinantal ideals, Gr\"obner bases}

\begin{document}

\title{Classification of regular parametrized one-relation operads}

\begin{abstract}
Jean-Louis Loday introduced a class of symmetric operads generated by one bilinear operation subject to one
relation making each left-normed product of three elements equal to a linear combination 
of right-normed products: 
 \[
(a_1a_2)a_3=\sum_{\sigma\in S_3}x_\sigma\, a_{\sigma(1)}(a_{\sigma(2)}a_{\sigma(3)})\ ;
 \]
such an operad is called a parametrized one-relation operad. For a particular choice of parameters $\{x_\sigma\}$,
this operad is said to be regular if each of its components is the regular representation of the symmetric group; equivalently, 
the corresponding free algebra on a vector space $V$ is, as a graded vector space, isomorphic to the tensor 
algebra of~$V$. We classify, over an algebraically closed field of characteristic zero, all regular parametrized one-relation operads. 
In fact, we prove that each such operad is isomorphic to one of the following five operads: the left-nilpotent operad 
defined by the relation $((a_1a_2)a_3)=0$, the associative operad, the Leibniz operad, the dual Leibniz (Zinbiel) operad, and the Poisson operad.
Our computational methods combine linear algebra over polynomial rings, representation theory of the symmetric group, and 
Gr\"obner bases for determinantal ideals and their radicals.
\end{abstract}
\maketitle

\section{Introduction}\label{sec:intro}

Jean-Louis Loday introduced the class of operads which he called parametrized one-relation operads. Each of these operads is 
generated by one binary operation satisfying one ternary relation which states that every monomial of the form $(a_1a_2)a_3$ can be rewritten as a linear combination of
permutations of the monomial $a_1(a_2a_3)$.
This can be regarded as a natural generalization of associativity, since it says that in each product of three
arguments we can reassociate parentheses to the right; the new feature is that we permit permutations of the
arguments.

\begin{definition}
	An operad $\calO$ generated by one bilinear operation $a_1,a_2\mapsto (a_1 a_2)$ is called a \emph{parametrized
		one-relation operad}, if its ideal of relations is generated by a single relation of the following form, called
	the \emph{LR relation} (LR for ``left-to-right'', since it allows us to re-associate parentheses in
	products of three elements from the left to the right):
%\begin{multline}
\begin{equation}
	\label{LRrelation}
	\tag{LR}
 (a_1a_2)a_3=
 	x_1 a_1(a_2a_3) +
	x_2 a_1(a_3a_2) +
	x_3 a_2(a_1a_3) + %\\
	x_4 a_2(a_3a_1) +
	x_5 a_3(a_1a_2) +
	x_6 a_3(a_2a_1).
\end{equation}
%	\end{multline}
\end{definition}

\begin{example}
	\label{fiveexamples}
	The most familiar examples of parametrized one-relation operads are the following five special cases:
	\begin{itemize}
	\item $(a_1a_2)a_3 = 0$ [left-nilpotent], 
	\item $(a_1a_2)a_3 = a_1(a_2a_3)$ [associative] 
	\item $(a_1a_2)a_3 = a_1(a_2a_3) - a_2(a_1a_3)$ [left Leibniz]
	\item $(a_1a_2)a_3 = a_1(a_2a_3) + a_1(a_3a_2)$ [right Zinbiel]
	\item $(a_1a_2)a_3 = a_1(a_2a_3) + \tfrac13 \bigl[ a_1(a_3a_2) - a_2(a_1a_3) + a_2(a_3a_1) - a_3(a_1a_2)\bigr]$ [Poisson]
	\end{itemize}
	The last identity defines the one-operation presentation of the Poisson operad discovered by Livernet and Loday~\cite{LL}. The usual definition of Poisson algebras is obtained by polarization~\cite{MR1}.
\end{example}

\begin{notation}
	For given coefficients $\mathbf{x}=[x_i]$ in Relation~\eqref{LRrelation}, we write:
	\begin{itemize}
		\item
		$\calO_\mathbf{x}$ for the quadratic symmetric binary operad defined by that relation,
		\item
		$\calO_\mathbf{x}(n)$ for the arity $n$ component of that operad (viewed as a right $S_n$-module),
		\item
		$\calO_\mathbf{x}(V)$ for the free $\calO_\mathbf{x}$-algebra generated by the vector space $V$,
	\end{itemize}
	See \S\ref{sectionpreliminaries} for a brief review of the theory of algebraic operads.
\end{notation}

Not much is known about parametrized one-relation operads in general. One natural question asked by Loday in \cite{Z} was to
determine the values of parameters for which operad $\calO_\mathbf{x}$ is Koszul. The five examples above all are Koszul,
and they have one more common feature: all components of each of these operads are regular representations of the corresponding
symmetric groups (it is obvious for the first of them, and is well known for the others, see \cite{LV}). This observation naturally leads to an attempt to search for other examples of Koszul operads among the operads
satisfying the same property.

\begin{definition}
	We say that the vector of coefficients $\mathbf{x}=[x_i]$ in Relation \eqref{LRrelation}
	is \emph{regular} if the following equivalent conditions hold:
	\begin{itemize}
		\item
		For each finite-dimensional vector space $V$, the free algebra $\calO_\mathbf{x}(V)$ is isomorphic as a graded vector space
		(\emph{not} as a graded algebra) to the non-unital tensor algebra $\overline{T}(V)$.
		\item
		For all $n \ge 1$, the $S_n$-module $\calO_\mathbf{x}(n)$ is isomorphic to the regular
		module $\mathbb{F} S_n$.
	\end{itemize}
\end{definition}

\begin{remark}
	It is often the case that the term ``regular'' is used to describe symmetric operads obtained from nonsymmetric operads by symmetrization. We choose to break that tradition and use this more general notion that includes symmetrizations of nonsymmetric operads but is wider (for instance, the operads $\Leib$ and $\Zinb$ are not symmetrizations of nonsymmetric operads): the class of operads whose free algebras have the tensor algebras as underlying vector spaces is very natural, and the term ``regular'' is most appropriate for that property.
\end{remark}

In this paper we give a complete classification of regular parametrized one-relation operads over an algebraically closed field $\mathbb{F}$ of characteristic $0$. The answer turns out to be wonderfully simple, however disappointing from the viewpoint of hunting for new Koszul operads: up to isomorphism, every such operad is one of those in Example \ref{fiveexamples}.  It is worth mentioning though, that for four of those operads, there is a one-parameter family of regular parametrized one-relation operads isomorphic to it.

\begin{maintheorem}[Theorem~\ref{th:isoclassification} (ii)]
	\label{maintheorem}
	Over an algebraically closed field of characteristic 0, every regular parametrized one-relation operad
	is isomorphic to one of the following five operads: the left-nilpotent operad defined by the identity
	$((a_1a_2)a_3)=0$, the associative operad, the Leibniz operad $\Leib$, the dual Leibniz (Zinbiel) operad
	$\Zinb$, and the Poisson operad.
\end{maintheorem}

It is an entertaining exercise to check that the five operads of Example \ref{fiveexamples} are pairwise nonisomorphic. One way to do that is as follows. The left-nilpotent operad, the associative operad, and the Poisson operad are easily seen to be isomorphic to their Koszul duals. The Koszul dual of the operad
$\Leib$ is isomorphic to the operad $\Zinb$; these two operads are not isomorphic because the suboperad
generated by the $S_2$-invariants of $\Zinb(2)$ is the operad $\Com$ of commutative associative algebras,
whereas in the case of $\Leib$, we have $(a_1a_2+a_2a_1)a_3=0$, which implies the identity $\{\{a_1,a_2\},\{a_3,a_4\}\}=0$
for the symmetrized product $\{a_1,a_2\}=a_1a_2+a_2a_1$. (In fact, it is possible to show that each identity satisfied by the symmetrized product follows from that identity).  The suboperads generated by the
$S_2$-invariants and $S_2$-anti-invariants of $\Poisson(2)$ are the operad $\Com$ and the operad $\Lie$ of
Lie algebras respectively.  Only the second of these claims holds for the associative operad, and neither
is true for the left-nilpotent operad.

The proof of the main theorem uses algorithms for linear algebra over polynomial rings, the
representation theory of the symmetric group, and commutative algebra, especially Gr\"obner bases for
determinantal ideals and their radicals. It is worth mentioning that in fact, our  proof of the main theorem shows that this classification result holds over a field $\mathbb{F}$ of characteristic zero where every quadratic equation has solutions (equivalently, $\mathbb{F}^\times=(\mathbb{F}^\times)^2$). The assumption on the characteristic is more fundamental: for example, the suboperad $\Com$ of  $\Poisson$ naturally splits off as a direct summand, and this implies that the corresponding $S_n$-modules are, in general, not regular in positive characteristic.

Our main technical result classifies all parametrized one-relation operads which are regular in arity $4$; it then turns out
that such operads are necessarily regular in all arities. It is an open problem to provide a theoretical proof which explains conceptually 
why this should be true. In a way, this phenomenon makes one think of Bergman's Diamond Lemma \cite{Bergman} in the context of operads
\cite{DK,LV}, however, there seem to be no obvious way to formalize that intuition. A related remark is that our results recover the family of operads 
from \cite{MR1} which interpolates between the associative and the Poisson operad; this family provides some supporting evidence for the operadic analogue of the Koszul deformation principle for quadratic algebras \cite{Dri,PP}; currently it is unknown if such an analogue exists.

\smallskip 

At a first glance, it is very natural to expect that most relations \eqref{LRrelation} define an operad
whose components are regular modules: one can say that re-association would permit rewriting every product as a combination of right-normed products \[ a_1 ( a_2 ( \cdots ( a_{n-1} a_n ) \cdots ) )\] which transform according to the regular representation. However, this strategy, when inspected more closely, exhibits many subtle phenomena: there are many ways at least to begin such rewriting, and at the same time, owing to the presence of all permutations on the right side of \eqref{LRrelation}, it is not at all clear that such a re-association process will terminate.  In fact, it turns out that the generic operad defined by \eqref{LRrelation} is as far from having regular modules as components as possible.

\begin{niltheorem}[Theorem \ref{th:niltheorem}]
	Let $\mathcal{N}$ be the set of all points $\mathbf{a}$ in the parameter space $\,\mathbb{F}^6$ for which
	the operad $\calO_\mathbf{a}$ is nilpotent of index $3$.
	Then $\mathcal{N}$ is a Zariski open subset of the parameter space $\mathbb{F}^6$.
\end{niltheorem}

In a nutshell, this follows from the fact that the Stasheff associahedron \cite{LV} of dimension $2$,
the pentagon, has the same number of vertices and edges; its vertices correspond to basis elements of
the free operad in arity $4$, and its edges are in one-to-one correspondence with the formal consequences
of one ternary relation.  Since the two numbers coincide, it is natural to expect that for a generic
relation all operations of arity $4$ will vanish.

\subsubsection*{Outline of this paper}

The pages that follow consist of the following sections.

Section \ref{sectionpreliminaries} recalls the necessary background on algebraic operads. We focus on binary quadratic operads, since it is the only type of operads that we consider.

Section \ref{sectionlinearalgebrapolynomialrings} reviews basics of linear algebra over
polynomial rings; we recall the notion of a determinantal ideal which is used to understand how the rank of a matrix with
polynomial entries depends on the parameters.

Section \ref{cubicrelationmatrixsection} introduces the cubic relation matrix $M$, square of size 120,
with entries in $\calC = \mathbb{F}[x_1,\dots,x_6]$.
This sparse matrix (over $94\%$ zeros) is the main object of study throughout the paper.
Its row module $\mathrm{Row}(M)$ over $\calC$ is the $S_4$-module of relations satisfied by the
general parametrized one-relation operad in arity $4$.
We use the algorithms from the previous section to obtain some basic information about the nullmodule of $M$:
the $\mathcal{C}$-module $\big\{ \, MH = O \mid H \in \calC^{120} \, \big\}$.
In particular, we prove the Nilpotency Theorem for parametrized one-relation operads.

Section \ref{representationsection} recalls basic concepts and methods from the representation theory of the
symmetric group, emphasizing arity $4$ and applications to polynomial identities.
This allows us to replace the single large matrix $M$ with five much smaller matrices which are much easier
to study using computational commutative algebra.

In Section \ref{proofclassificationsection}, we combine the approaches of the previous sections and prove
the main technical result, a classification of all parametrized one-relation operads for which the arity $4$ component 
is the regular module. This is done by a careful analysis of possible relations \eqref{LRrelation} by increasing number of nonzero coefficients.

In Section \ref{actualclassification}, we establish that each of the operads in the previous section is 
regular, and isomorphic to one of the five operads from Main Theorem, thus obtaining a full classification.

Section \ref{sectionirregular} outlines some further research directions and open problems.

%%%%%%%%%%%%%%%%%%%%%%%%%%%%%%%%%%%%%%%%%%%%%%%%%%%%%%%%%%%%%%%%%%%%%%%%%%%%%%%%%%%%%%%%%%%%%%%%%%%%%%%%%%%% 108
%%%%%%%%%%%%%%%%%%%%%%%%%%%%%%%%%%%%%%%%%%%%%%%%%%%%%%%%%%%%%%%%%%%%%%%%%%%%%%%%%%%%%%%%%%%%%%%%%%%%%%%%%%%% 108
%%%%%%%%%%%%%%%%%%%%%%%%%%%%%%%%%%%%%%%%%%%%%%%%%%%%%%%%%%%%%%%%%%%%%%%%%%%%%%%%%%%%%%%%%%%%%%%%%%%%%%%%%%%% 108

\subsubsection*{Acknowledgements. } We thank CIMAT (Guanajuato, Mexico) for hosting
	the CIMPA Research School \emph{``Associative and Nonassociative Algebras and Dialgebras: Theory and Algorithms''} during which our collaboration began.
	The second author thanks the Department of Mathematics and Statistics at the University of Saskatchewan
	for its hospitality during his visit in July 2015.
	The first author thanks Michael Monagan and Jiaxiong Hu for numerous conversations on \texttt{Maple} programming, and
	Mark Spivakovsky and Jonathan Lee for correspondence about commutative algebra. The second author thanks Muriel Livernet for sharing her unpublished M.~Sc.~thesis \cite{Liv} written under direction of Jean-Louis Loday. The class of operads studied in this paper was a topic of interest for Jean-Louis for many years; he asked the second author the question that is resolved in the present paper in Luminy in February 2012. We dedicate this belated answer to his memory.

\section{Preliminaries on algebraic operads}
\label{sectionpreliminaries}

In this section we recall basic background information from the theory of operads. All operads in this paper
are generated by one binary operation, and we choose to keep this section within these limits. For general
definitions and further details, we refer the reader to the recent comprehensive monograph by
Loday and Vallette~\cite{LV}. For the algorithmic aspects, see \cite{BD}. 

\subsubsection*{Nonsymmetric operads}

Operads encode multilinear operations with many arguments in the same way as associative algebras encode
linear maps. The first level of abstraction is the notion of a nonsymmetric operad, where operations can be substituted into one another, but arguments of operations cannot be permuted. We may therefore choose a symbol such as $\arg$ to represent each of the $n$ arguments of a given operation: $\omega(\arg,\dots,\arg)$.
The different occurrences of $\arg$ represent different arguments, which are distinguished by their positions.

Throughout the paper, we only consider the case in which all operations are built out of one generating operation; therefore, we shall not give that operation a specific name, and write simply $(\arg \arg)$, where it is understood that every pair of parentheses contains exactly two arguments, and each of these arguments is in turn either $\arg$ or another pair of parentheses containing \dots, etc. This notation remains unambiguous if we also omit the commas separating the arguments.

\begin{definition}
	\label{arityfunction}
	The \emph{free nonsymmetric operad} $\Omega$ generated by one binary operation $(\arg \arg)$ has \emph{components} $\Omega(n)$, $n \ge 1$, where $\Omega(n)$ is spanned by the composite operations built out of $(\arg \arg)$ that have exactly $n$ arguments (in other words, it is of \emph{arity} $n$). Such an operation must have exactly $n-1$ occurrences of $(\arg \arg)$ (in other words, is of \emph{weight} $n-1$).
\end{definition}

\begin{example}\label{ex:basismonomials}
	The following \emph{balanced bracketings} form a basis of $\Omega(n)$ for $1\le n\le 4$:
 \[
	\left\{
	\begin{array}{rl}
	n & \text{monomials}
	\\
	1 &
	\ast
	\\
	2 &
	(\ast\ast)
	\\
	3 &
	(\ast(\ast\ast)), \qquad
	((\ast\ast)\ast)
	\\
	4 &
	(\ast(\ast(\ast\ast))), \qquad
	(\ast((\ast\ast)\ast)), \qquad
	((\ast\ast)(\ast\ast)), \qquad
	((\ast(\ast\ast))\ast), \qquad
	(((\ast\ast)\ast)\ast)
	\\
	\end{array}
	\right.
\]
	From now on we will omit the outermost pair of parentheses.
\end{example}

\begin{lemma}[\cite{S}]
	The dimension of $\Omega(n)$, or equivalently the number of distinct balanced bracketings using $n-1$ pairs of brackets, is equal to the Catalan number:
	\begin{equation}
	\label{catalan}
	\dim\,\Omega(n) = \frac{1}{n} \binom{2n-2}{n-1} \qquad (n \ge 1).
	\end{equation}
\end{lemma}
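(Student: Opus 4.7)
The plan is to exploit the recursive structure of balanced bracketings to set up the classical Catalan recurrence, convert it to a functional equation for a generating function, and solve that equation explicitly to read off the closed-form expression.

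First I would note that every monomial in $\Omega(n)$ with $n \ge 2$ is determined by its outermost pair of brackets: it can be written uniquely as $(BC)$ where $B \in \Omega(k)$ and $C \in \Omega(n-k)$ for some $1 \le k \le n-1$, corresponding to the split of the arguments between the left and right halves. Setting $c_n := \dim \Omega(n)$, with the base case $c_1 = 1$ coming from the single monomial $\ast$, this bijection yields the recurrence
\[
c_n \;=\; \sum_{k=1}^{n-1} c_k\, c_{n-k} \qquad (n \ge 2).
\]

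Next I would introduce the ordinary generating function $C(t) = \sum_{n \ge 1} c_n t^n$. Multiplying the recurrence by $t^n$ and summing translates the convolution on the right into $C(t)^2$, while the base case contributes the linear term, giving the functional equation $C(t) = t + C(t)^2$. Solving this quadratic in $C(t)$ and selecting the branch with $C(0)=0$ produces
\[
C(t) \;=\; \frac{1 - \sqrt{1 - 4t}}{2}.
\]

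Finally I would extract coefficients by applying the generalized binomial theorem to $(1-4t)^{1/2}$ and simplifying; this is a routine manipulation of binomial coefficients that yields $c_n = \tfrac{1}{n}\binom{2n-2}{n-1}$, as required. There is no real obstacle here — the statement is a classical fact about the Catalan numbers — and indeed one could equally prove the formula by exhibiting a bijection between balanced bracketings of $n$ arguments and any standard Catalan family (triangulations of an $(n{+}1)$-gon, Dyck paths of length $2(n-1)$, etc.); the only mild care is in choosing the correct branch of the square root and in the binomial simplification at the end.
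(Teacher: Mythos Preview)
Your argument is correct and is the standard generating-function derivation of the Catalan formula. Note, however, that the paper does not actually prove this lemma: it is stated with a citation to Stanley's book on Catalan numbers and no proof is given, so there is nothing to compare against. Your write-up would serve perfectly well as a self-contained justification if one were wanted.
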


\medskip

As a vector space, $\Omega(n)$ is the homogeneous subspace of degree $n$ in the free nonassociative algebra with one binary operation $\omega$ and one generator $\arg$, but the collection of all components $\Omega(n)$ has a much richer structure to it, which exemplifies the simplest case in the theory of algebraic operads.

\begin{definition}
	\label{composition}
	The \emph{composition maps} $\circ_i$ in the free nonsymmetric operad $\Omega$ are defined as follows. On basis monomials $\mu \in \Omega(n)$ and $\mu' \in \Omega(n')$, the $i$-th composition $\mu \circ_i \mu' \in \Omega(n+n'-1)$ for $1 \le i \le n$, is the result of substituting $\mu'$ for the $i$-th argument $\arg$ in $\mu$. This operation extends bilinearly to any elements $\alpha\in\Omega(n)$ and $\alpha'\in\Omega(n')$.
\end{definition}

\begin{definition}
	\label{totalorder}
	We inductively define a total order $\mu \prec \mu'$ on nonsymmetric basis monomials $\mu$ and $\mu'$.
	The basis of the induction is the unique total order on the set $\{ \arg \}$ which is a basis of $\Omega(1)$.
	Consider $\mu \in \Omega(n)$ and $\mu' \in \Omega(n')$ where $n$ and $n'$ are not both equal to $1$.
	If $n< n'$ then we set $\mu \prec \mu'$.
	If $n'< n$ then we set $\mu' \prec \mu$.
	Otherwise, $n = n'$;
	write $\mu = \mu_1\mu_2$ and $\mu' = \mu'_1\mu'_2$. We have $\mu_i \in \Omega(p_i)$ for $p_i< n$ and $\mu'_i \in \Omega(p'_i)$ for $p'_i < n'$, therefore by induction
	we may assume that our total order is defined for $\mu_i$ and $\mu'_i$. If $\mu_1\ne \mu_1'$ we set $\mu\prec \mu'$ if and only if $\mu_1\prec \mu_1'$, else we set $\mu\prec \mu'$ if and only if $\mu_2\prec \mu_2'$. For example, the monomials in Example \ref{ex:basismonomials} follow this order.
\end{definition}

\subsubsection*{Symmetric operads}

Of course when one deals with actual multilinear operations, there is more structure to take into account,
namely permutations of arguments. Formalizing that leads to the notion of a symmetric operad.

\begin{definition}
	\label{symmetricdefinition}
	The \emph{free symmetric operad} $\calT$ generated by one binary operation has components
	\begin{equation}
	\label{symmetricoperad}
	\calT(n) \, = \, \Omega(n) \, \otimes \, \mathbb{F}S_n,
	\end{equation}
	where $S_n$ acts trivially on $\Omega(n)$ and $\mathbb{F}S_n$ is the right regular module.
	A basis for $\calT(n)$ consists of all simple tensors $\psi \otimes \tau$ where
	$\psi \in \Omega(n)$ is a nonsymmetric basis monomial and $\tau \in S_n$ is a permutation of the arguments.
\end{definition}

\begin{remark}
	The natural interpretation of the simple tensor $\psi \otimes \tau$ is that $\psi$ represents a certain
	bracketing (or placement of operation symbols) applied to the underlying multilinear monomial
	$a_{\tau(1)} \cdots a_{\tau(n)}$, which is the
	result of the action of $\tau$ on a decomposable tensor $a_1\otimes \cdots\otimes a_n$.
	Since this action of $S_n$ can lead quickly to a great deal of confusion, we write a few sentences to clarify it.
	Consider this left action of $S_n$ on decomposable tensors $v_1\otimes \cdots\otimes v_n$:
	\[
	\tau( v_1 \otimes \cdots \otimes v_n) = v_{\tau^{-1}(1)} \otimes \cdots \otimes v_{\tau^{-1}(n)}.
	\]
	This action moves the factor in position $i$ to position $\tau(i)$, and induces a right action on $\calT(n)$
	which has the property that its extension to the tensor product $\calT(n) \otimes_{\mathbb{F}S_n} V^{\otimes n}$
	can be conveniently interpreted as applying operations to arguments.
	In other words,
	\begin{equation}
	\label{groupaction}
	( \psi \otimes \tau ) \cdot \sigma
	=
	\psi \otimes \tau\sigma.
	\end{equation}
	The total order of Definition \ref{totalorder} extends from the nonsymmetric case to the symmetric case:
	given basis monomials $\psi \otimes \tau$ and $\psi' \otimes \tau'$, we first compare the
	bracketings $\psi, \psi'$, and if $\psi = \psi'$ then we compare the permutations $\tau, \tau'$
	in lexicographical order.
	It is straightforward to verify that the natural composition of operations in $\calT$ is equivariant with
	respect to this action of the symmetric groups.
	More concretely, one can view $\calT(n)$ as the multilinear subspace of degree $n$ in the free nonassociative algebra with one binary operation and $n$ generators $a_1,\dots,a_n$.
\end{remark}

\begin{lemma}
	\label{symmetricdimension}
	The dimension of $\calT(n)$, or equivalently the number of distinct multilinear $n$-ary  nonassociative monomials, is given by the following formula:
	\[
	\dim\,\calT(n)
	\;=\;
	\frac{1}{n} \binom{2n-2}{n-1} n!\, .
%	\;=\;
%	\frac{(2n-2)!}{(n-1)!}
%	\;=\;
%	1, \;\; 2, \;\; 12, \;\; 120, \;\; 1680, \;\; 30240, \;\; 665280, \;\; 17297280, \;\; 518918400, \;\; \dots
	\]
\end{lemma}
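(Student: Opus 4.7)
The plan is to observe that this lemma is an immediate consequence of the preceding structural description of $\calT(n)$. By Definition~\ref{symmetricdefinition}, we have the vector space isomorphism $\calT(n) = \Omega(n) \otimes \mathbb{F}S_n$, and dimension is multiplicative on tensor products, so
\[
\dim \calT(n) \;=\; \dim \Omega(n) \cdot \dim \mathbb{F}S_n.
\]
First I would invoke the Catalan formula from the previous lemma (attributed to Stanley) to substitute $\dim \Omega(n) = \frac{1}{n}\binom{2n-2}{n-1}$, and then use $\dim \mathbb{F}S_n = n!$ to obtain the stated formula.

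For the alternative ``equivalently'' interpretation in terms of multilinear monomials, I would give a one-sentence bijective argument: a multilinear $n$-ary nonassociative monomial in the generators $a_1,\dots,a_n$ is determined uniquely by a pair consisting of a balanced bracketing of $n$ placeholders $\ast$ (of which there are $\frac{1}{n}\binom{2n-2}{n-1}$ by the cited Catalan count) and a bijection assigning to each placeholder one of the $n$ distinct generators (of which there are $n!$). This matches the simple-tensor basis $\psi \otimes \tau$ of $\calT(n)$ described in Definition~\ref{symmetricdefinition}.

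There is no real obstacle; the content of the lemma is just the product of two previously established counts. The only substantive point worth flagging is that one should confirm that the basis described in Definition~\ref{symmetricdefinition}, namely $\{\psi \otimes \tau : \psi \in \Omega(n),\, \tau \in S_n\}$, really is a basis, which is immediate from the fact that $\mathbb{F}S_n$ has $\{\tau : \tau \in S_n\}$ as a basis and the tensor product is taken over the ground field.
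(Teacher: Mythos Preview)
Your proof is correct and matches the paper's own argument essentially verbatim: the paper simply says the formula follows immediately from the tensor decomposition $\calT(n)=\Omega(n)\otimes\mathbb{F}S_n$ together with the Catalan count for $\dim\Omega(n)$. Your additional bijective sentence for the ``equivalently'' clause is a harmless elaboration that the paper omits.
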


\begin{proof}
	This follows immediately from Definition \ref{arityfunction} and equations \eqref{catalan} and
	\eqref{symmetricoperad}.
\end{proof}

\begin{definition}
	By a \emph{quadratic relation} in the free symmetric operad $\calT$ we mean an element of $\calT(3)$: a (nonzero) linear combination of simple tensors $\psi \otimes \tau$ where each bracketing $\psi \in \Omega(3)$ involves two occurrences of the generator $(\arg,\arg)$. Any $S_3$-submodule $\calR \subseteq \calT(3)$ is called a \emph{module of quadratic relations}.
	To determine a module of relations $\calR$, it suffices to give a set of module generators, not a
	linear basis (which is typically much larger).
\end{definition}

\begin{remark}
	When discussing relations in an operad, the word quadratic (and similarly cubic, quartic, etc.) refers to the weight $k-1$, not to the arity $k$.
	In particular, the quadratic relations are of arity three.
\end{remark}

\begin{definition}
	An \emph{(operad) ideal} $\calJ$in the free symmetric operad $\calT$ is a family of $S_n$-submodules $\calJ(n) \subseteq\calT(n)$, where $n \ge 1$, which is closed
	under composition with arbitrary elements of $\calT$.
	
	The \emph{quotient operad} $\calT \,/ \calJ$ has components $(\calT \,/ \calJ)(n) = \calT(n) / \calJ(n)$ with the natural induced compositions.
	
	The ideal $\calJ \subseteq \calT$ \emph{generated} by a subset $\Gamma \subseteq \calT$
	is the intersection of all the ideals containing $\Gamma$. Notation: $\calJ=(\Gamma)$.
\end{definition}

\begin{definition}
	\label{definitionconsequences}
	Consider the operad ideal $\calJ = (\rho)$ generated by one quadratic relation $\rho \in \calT(3)$.
	The $S_3$-module of quadratic relations is $\calJ(3) = \rho \cdot \mathbb{F}S_3$,
	the right $S_3$-module generated by $\rho$.
	We regard $\rho = \rho(a_1,a_2,a_3)$ as an operation with three arguments.
	The component $\calT(4)$ contains three compositions $\rho \circ_i \omega$ and two compositions $\omega \circ_j \rho$ which vanish in $(\calT/(\rho))(4)$:
	\begin{equation}
	\label{generalconsequences}
	\rho((a_1a_2),a_3,a_4),\quad
	\rho(a_1,(a_2a_3),a_4),\quad
	\rho(a_1,a_2,(a_3a_4)),\quad
	\rho(a_1,a_2,a_3)a_4,\quad
	a_1\rho(a_2,a_3,a_4).
	\end{equation}
	We call relations \eqref{generalconsequences} the \emph{cubic consequences} of the quadratic relation $\rho$. These five relations generate the $S_4$-module $\calJ(4) \subseteq \calT(4)$. We can inductively repeat this generation of consequences into higher arities to compute every $S_n$-module
	in the ideal $( \rho )$, but we will only require the cubic case.
\end{definition}

\begin{definition}
We say that an operad $\calP = \calT\,/\calJ$ is \emph{nilpotent} if there exists $k_0 \ge 0$ such that $\calP(k+1) = \{0\}$ for all $k\ge k_0$.
If $k_0$ is the least nonnegative integer satisfying this condition, then we say that $\calP$ is nilpotent of \emph{index}~$k_0$. 
 (This way, nilpotency of index $k_0$ means that all operations made of $k_0$ or more copies of the generating operation vanish).
\end{definition}

Clearly $\calP(k) = \{0\}$ if and only if $\calJ(k) = \calT(k)$. Hence $\calP$ is nilpotent of index $k_0$ if and only if $\calJ(k_0) \ne \calT(k_0)$ and
$\calJ(k) = \calT(k)$ for all $k>k_0$. Compositions of elements of $\calT(j)$ with
the generating operation produce all of $\calT(j+1)$, so to check nilpotency it is enough to check that $\calP(k+1)=0$ just for $k=k_0$, and not for all $k\ge k_0$.

\begin{example}[\cite{MR3}]
	\label{antiassociativeexample}
	The simplest example of a nilpotent operad is the \emph{anti-associative} operad $\calA^+$ generated
	by one binary operation satisfying the relation 
	 \[(a_1a_2)a_3 + a_1(a_2a_3) = 0 ; \] this relation introduces a
	sign change every time we reassociate a product of three factors.
	This relation is the special case with parameters
	$[-1, 0, 0, 0, 0, 0]$ of Relation \eqref{LRrelation}; hence $\calA^+$ is a parametrized one-relation operad.
	It is easy to show that $\calA^+$ is nilpotent of index $3$. Indeed, we note that the defining relation of our operad can be applied as a rewriting rule to the product $((a_1a_2)a_3)a_4$ in two different ways, by rewriting $(a_1a_2)a_3$ first, obtaining
	\[
	((a_1a_2)a_3)a_4 = -(a_1(a_2a_3))a_4 = a_1((a_2a_3)a_4) = -a_1(a_2(a_3a_4)) ,
	\]
	or by setting $b=(a_1a_2)$ and rewriting $(ba_3)a_4$ first, obtaining.
	\[
	((a_1a_2)a_3)a_4 = -(a_1a_2)(a_3a_4) = a_1(a_2(a_3a_4)).
	\]
	(This should remind the reader of computing an S-polynomial when calculating a Gr\"obner basis).
	We conclude that $a_1(a_2(a_3a_4)) = 0$.
	Since all five basis compositions \eqref{generalconsequences} appear along the way, all of them are zero.
	Hence $\calA^+(4) = \{0\}$, and the operad $\calA^+$ is nilpotent. % (except in characteristic 2 where it coincides with the associative operad).
\end{example}

\subsubsection*{Matrix condition for regularity}

Relation \eqref{LRrelation} is a special case of the following general binary quadratic relation, first considered in \cite{Liv}:
\begin{equation}
\label{generalrelation}
R(a_1,a_2,a_3) =
\sum_{\tau \in S_3} w_\tau \, \big( a_{\tau(1)}a_{\tau(2)} \big) \, a_{\tau(3)}
+
\sum_{\tau \in S_3} y_\tau \, a_{\tau(1)}\big( a_{\tau(2)} \, a_{\tau(3)}\big),
\end{equation}
where $w_\tau, y_\tau \in \mathbb{F}$. The $S_3$-submodule generated by $R$ is the module $(R) \cap \calT(3)$ of quadratic relations.
If $H \subseteq S_3$ is the (normal) subgroup fixing $R$ then $(R) \cap \calT(3) \cong \mathbb{F}(S_3/H)$
and so $\dim(R) \le 6$, with equality if and only if only the identity permutation fixes $R$.
The larger $\dim(R)$, the smaller $H$: dimension and symmetry are inversely related.
For us the important case is $\dim(R) = 6$: thus $R$ generates an $S_3$-module
isomorphic to $\mathbb{F}S_3$.
Relation \eqref{LRrelation} satisfies this condition.
We shall return to this general relation \eqref{generalrelation} in Section \ref{representationsection}
where it will serve as a toy example for representation-theoretic method.

We write out relation \eqref{generalrelation} term by term, replacing the
permutation subscripts by integers, using the lex order in $S_3$.
The relation $R$ then has the following form:
\begin{align}
\label{relation2}
w_1 (a_1a_2)a_3 + w_2 (a_1a_3)a_2 + w_3 (a_2a_1)a_3 + w_4 (a_2a_3)a_1 + w_5 (a_3a_1)a_2 + w_6 (a_3a_2)a_1
\\
\notag \; +
y_1 a_1(a_2a_3) + y_2 a_1(a_3a_2) + y_3 a_2(a_1a_3) + y_4 a_2(a_3a_1) + y_5 a_3(a_1a_2) + y_6 a_3(a_2a_1).
\end{align}
For a relation $R$ of the form \eqref{LRrelation} we have $w_1 = 1$ and $w_2 = \cdots = w_6 = 0$.
Let $[ \, W \mid Y \, ]$ be the matrix whose rows are the coefficient vectors obtained by applying every
$\sigma \in S_3$ to $R$:
\[
R \cdot \sigma
\;=\;
\sum_{\tau \in S_3}
w_\tau \, \big( a_{\sigma\tau(1)}a_{\sigma\tau(2)} \big) \, a_{\sigma\tau(3)}
+
\sum_{\tau \in S_3}
y_\tau \, \big( a_{\sigma\tau(1)}a_{\sigma\tau(2)} \big) \, a_{\sigma\tau(3)},
\qquad
\sigma\in S_3.
\]
Working this out explicitly, where the columns correspond to the basis monomials in the order of
\eqref{relation2}, we obtain a matrix where the pattern of subscripts matches that of the celebrated
Dedekind--Frobenius determinant for $S_3$:
\begin{equation}
\label{rowspace1}
\left[ \begin{array}{c|c} W & Y \end{array} \right]
\;=\;
\left[
\begin{array}{cccccc|cccccc}
w_1 & w_2 & w_3 & w_4 & w_5 & w_6 & y_1 & y_2 & y_3 & y_4 & y_5 & y_6 \\
w_2 & w_1 & w_5 & w_6 & w_3 & w_4 & y_2 & y_1 & y_5 & y_6 & y_3 & y_4 \\
w_3 & w_4 & w_1 & w_2 & w_6 & w_5 & y_3 & y_4 & y_1 & y_2 & y_6 & y_5 \\
w_5 & w_6 & w_2 & w_1 & w_4 & w_3 & y_5 & y_6 & y_2 & y_1 & y_4 & y_3 \\
w_4 & w_3 & w_6 & w_5 & w_1 & w_2 & y_4 & y_3 & y_6 & y_5 & y_1 & y_2 \\
w_6 & w_5 & w_4 & w_3 & w_2 & w_1 & y_6 & y_5 & y_4 & y_3 & y_2 & y_1
\end{array}
\right]
\end{equation}

\begin{lemma}
\label{leftminor}
Suppose that for the given $6\times 6$ matrices $W$ and $Y$ with coefficients in $\mathbb{F}$ the rows of the matrix $[ \, W \mid Y \, ]$ form a single orbit for the right action of $S_3$, as in \eqref{rowspace1} above. The subspace they generate contains a relation of the type \eqref{LRrelation} if and only if $W$ is invertible.
\end{lemma}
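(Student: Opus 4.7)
The plan is to treat the two implications separately, exploiting in the harder direction the $S_3$-equivariance built into the group-matrix form of $[\,W\mid Y\,]$.

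For the ``if'' direction, suppose $W$ is invertible and set $v=e_1 W^{-1}\in\mathbb{F}^6$, where $e_1=(1,0,0,0,0,0)$. Then the linear combination of rows given by $v\cdot[\,W\mid Y\,]=[\,e_1\mid vY\,]$ has coefficient $1$ in front of the left-normed monomial $(a_1a_2)a_3$, coefficient $0$ in front of the other five left-normed monomials, and some coefficients on the right-normed monomials which we can call $-x_1,\dots,-x_6$. Rearranging, this is precisely a relation of type \eqref{LRrelation}.

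For the ``only if'' direction, the key observation is that the row space of $[\,W\mid Y\,]$ is $S_3$-invariant with respect to the natural action on columns. This follows from the hypothesis: by construction, the six rows form a single orbit under the right action of $S_3$ on $\calT(3)$, so applying any $\sigma\in S_3$ to all coordinates simply permutes the rows among themselves, leaving the row space fixed. Furthermore, the column action splits as the regular representation of $S_3$ on the first six coordinates (indexed by the left-normed monomials $(a_{\tau(1)}a_{\tau(2)})a_{\tau(3)}$) and independently on the last six (indexed by right-normed monomials), as is visible from the block structure of \eqref{rowspace1}. Now assume that the row space contains a relation of type \eqref{LRrelation}; written as a coordinate vector this is some $[\,e_1\mid \mathbf{z}\,]$. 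By $S_3$-invariance, the row space also contains $[\,\sigma\cdot e_1\mid \sigma\cdot\mathbf{z}\,]$ for every $\sigma\in S_3$, and since $S_3$ permutes the standard basis $\{e_1,\dots,e_6\}$ transitively in its regular representation, we obtain vectors with every standard basis vector in the left block.

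Projecting $\mathrm{Row}([\,W\mid Y\,])$ onto the first six coordinates yields $\mathrm{Row}(W)$; by the previous paragraph this projection contains $e_1,\dots,e_6$ and hence equals all of $\mathbb{F}^6$, so $W$ has full rank and is invertible. The only delicate point in the whole argument is keeping the left and right $S_3$-actions on rows and columns untangled, since they are easily confused; once one confirms that the coordinate action on each six-dimensional block is the regular representation, the proof reduces to a one-line transitivity/dimension count.
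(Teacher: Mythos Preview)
Your proof is correct and follows essentially the same idea as the paper's: both directions ultimately reduce to the fact that row-reducing $[\,W\mid Y\,]$ to $[\,I\mid Z\,]$ is possible precisely when $W$ is invertible, together with the $S_3$-invariance of the row space. The paper packages the argument via the row canonical form and additionally observes that $Z=W^{-1}Y$ inherits the Dedekind--Frobenius group-matrix pattern (because $W$ and $Y$ are regular-representation matrices of elements of $\mathbb{F}S_3$), a fact you do not need for the bare statement but which the paper uses later; conversely, you are more explicit than the paper about the ``only if'' direction, spelling out the transitivity of the regular $S_3$-action on $\{e_1,\dots,e_6\}$ that the paper leaves implicit in its appeal to the RCF.
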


\begin{proof}
Note that every matrix representing the orbit of a relation of the type \eqref{LRrelation} is a matrix of the form
\begin{equation}
\label{rowspace2}
\left[
\begin{array}{cccccc|cccccc}
1   & 0   & 0   & 0   & 0   & 0   & -x_1 & -x_2 & -x_3 & -x_4 & -x_5 & -x_6 \\
0   & 1   & 0   & 0   & 0   & 0   & -x_2 & -x_1 & -x_5 & -x_6 & -x_3 & -x_4 \\
0   & 0   & 1   & 0   & 0   & 0   & -x_3 & -x_4 & -x_1 & -x_2 & -x_6 & -x_5 \\
0   & 0   & 0   & 1   & 0   & 0   & -x_5 & -x_6 & -x_2 & -x_1 & -x_4 & -x_3 \\
0   & 0   & 0   & 0   & 1   & 0   & -x_4 & -x_3 & -x_6 & -x_5 & -x_1 & -x_2 \\
0   & 0   & 0   & 0   & 0   & 1   & -x_6 & -x_5 & -x_4 & -x_3 & -x_2 & -x_1
\end{array}
\right] ,
\end{equation}
and this matrix is in row canonical form (RCF). For any matrix $[ \, W \mid Y \, ]$, its RCF is a matrix of the form $[ \, I \mid Z \, ]$ if and only if $W$ is invertible, and in this case $Z=W^{-1}Y$. Finally, in \eqref{rowspace1} the matrices $W$ and $Y$ are, respectively, the matrices representing the action of $\sum_{\sigma}w_\sigma\sigma$ and $\sum_{\sigma}y_\sigma\sigma$ on the right regular module, and thus so is $W^{-1}Y$, justifying the same Dedekind--Frobenius determinant pattern of matrix elements in $W^{-1}Y$. 
\end{proof}

\subsubsection*{Koszul duality}

The theory of Koszul duality for operads, due to Ginzburg and Kapranov \cite{GK}, associates to a quadratic operad $\calP$ another quadratic operad $\calP^!$, its Koszul dual. In the case when $\calP$ satisfies some good homological properties (such operads are called Koszul operads), the Koszul dual operad can be used to control deformation theory of $\calP$-algebras. (Familiar examples are given by deformation complexes of associative algebras and Lie algebras). For an operad generated by a binary product, the operad $\calP^!$ admits a very economic description that we recall here, referring the reader to \cite{LV} for general definitions and results on Koszul duality, as well as further motivation.

\begin{proposition}[\cite{LV}]\label{koszuldualoperad}
	Suppose that $\calP\cong\calT/(\calR)$ is a quotient operad of $\calT$ by some module of quadratic relations $\calR$.
	We define a scalar product on $\calT(3)$ as follows:
	\begin{equation}
	\label{scalarproduct}
	( \psi_1, \psi_1 ) =  1,
	\; 
	( \psi_2, \psi_2 ) = -1,
	\; 
	( \psi_1, \psi_2 ) =  0,
	\; 
	\text{ where }
	\; 
	\psi_1 = (\arg\arg)\arg,
	\; 
	\psi_2 = \arg(\arg\arg).
	\end{equation}
	This can be extended to an $S_3$-invariant scalar product on $\calT(3)$ by the formula
	\begin{equation}
	\label{scalarproduct2}
	( \psi_i \otimes \tau_j, \psi_k \otimes \tau_\ell )
	=
	(\psi_i,\psi_k) \, \delta_{j\ell} \varepsilon( \tau_j ) ,
	\end{equation}
	where $\varepsilon\colon S_3 \to \{ \pm 1 \}$ is the sign, and $\delta_{j\ell}$ is the Kronecker symbol. We write $\calR^\bot$ for its orthogonal complement with respect to \eqref{scalarproduct}. The \emph{Koszul dual operad} $\calP^!$ is the quotient operad $\calT/(\calR^\bot)$.
\end{proposition}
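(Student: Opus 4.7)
My plan is to derive the description of $\calP^!$ from the general definition of Koszul duality for quadratic operads, as in \cite{LV}, \S7.2. Recall that for any binary quadratic operad $\calP = \calT(E)/(\calR)$ with $S_2$-module of generators $E$, the Koszul dual is $\calP^! = \calT(E^\vee \otimes \sgn_2)/(\calR^\perp)$, where $\calR^\perp$ is the orthogonal complement of $\calR$ under the canonical sign-twisted pairing between the arity-three components of $\calT(E)$ and $\calT(E^\vee\otimes\sgn_2)$. In our setting $E=\mathbb{F}S_2\cdot(\arg\arg)$ is the free rank-one right $S_2$-module on one binary operation with no intrinsic symmetry, and the content of the proposition is that, via a canonical identification, this abstract recipe reduces to taking the orthogonal complement inside $\calT(3)$ itself with respect to the explicit pairing \eqref{scalarproduct2}.

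First I would fix an identification of $E^\vee \otimes \sgn_2$ with $E$: since $E$ is free of rank one over $\mathbb{F}S_2$, choosing the vector dual to the generator and untwisting by the sign character produces a canonical isomorphism $E^\vee\otimes\sgn_2\cong E$, and hence an isomorphism $\calT(E^\vee\otimes\sgn_2)\cong\calT$. Under this identification, the arity-three pairing transports to a pairing on $\calT(3)=\Omega(3)\otimes\mathbb{F}S_3$. I would then compute the transported pairing factor by factor. Its $\Omega(3)$-component lives on the two-element basis $\{\psi_1,\psi_2\}$: substituting the dual generator into the first slot of another generator picks up a sign opposite to substituting into the second slot, accounting for $(\psi_1,\psi_1)=+1$ and $(\psi_2,\psi_2)=-1$, while distinct tree shapes pair to zero. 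The $\mathbb{F}S_3$-component is the standard pairing on the regular representation, twisted by $\sgn_3$, which evaluates to $\delta_{j\ell}\,\varepsilon(\tau_j)$. Multiplying the two contributions recovers \eqref{scalarproduct2}.

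Finally, I would verify that $\calR^\perp$ so defined is an $S_3$-submodule of $\calT(3)$, so that $(\calR^\perp)$ is a bona fide operad ideal and the quotient $\calT/(\calR^\perp)$ makes sense. Using the right action $(\psi\otimes\tau)\cdot\sigma=\psi\otimes\tau\sigma$, a direct computation gives $(v\cdot\sigma,\,w\cdot\sigma)=\varepsilon(\sigma)\,(v,w)$, which is exactly the sign-twisted invariance that \eqref{scalarproduct2} is designed to have; this implies $(v\cdot\sigma,\,w)=\varepsilon(\sigma)\,(v,\,w\cdot\sigma^{-1})$, so $S_3$-stability of $\calR$ forces $v\cdot\sigma\in\calR^\perp$ whenever $v\in\calR^\perp$. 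The main obstacle is the sign-twist bookkeeping in step two: the proposition folds two independent sign contributions—one from the duality twist $E\leadsto E^\vee\otimes\sgn_2$ at the generator level, and one from the sign character at the arity-three level—into the single scalar product \eqref{scalarproduct2}, and lining up these conventions to produce precisely the values $\pm 1$ on $\{\psi_1,\psi_2\}$ requires following \cite{LV} with care. Once the identifications are fixed, the remainder is essentially a pattern-match against the general Koszul duality formulas.
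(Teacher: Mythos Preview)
The paper does not include a proof of this proposition: it is stated with a citation to \cite{LV} and functions as a recall of the explicit description of Koszul duality in the special case of a single binary generator with no symmetry. In the paper's logic this proposition is essentially a black-box input, and the actual computational content appears only in the subsequent Lemma~\ref{duallemma}, where the scalar product \eqref{scalarproduct2} is used to compute $\calR^\perp$ for the LR relation.

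Your proposal, by contrast, sketches how to derive the proposition from the general Koszul duality machinery in \cite[\S7.2]{LV}. The outline is sound: identify $E^\vee\otimes\sgn_2$ with $E$ using that $E=\mathbb{F}S_2$ is free of rank one, transport the canonical arity-three pairing through this identification, and check that the result factors as (tree-shape pairing)$\times$(sign-twisted regular pairing on $S_3$). The verification that $\calR^\perp$ is an $S_3$-submodule via the twisted invariance $(v\cdot\sigma,w\cdot\sigma)=\varepsilon(\sigma)(v,w)$ is correct and worth including. The one place where your sketch is thin is precisely where you flag it: the sign bookkeeping that produces $(\psi_1,\psi_1)=+1$ and $(\psi_2,\psi_2)=-1$ rather than some other pattern. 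To make this rigorous you would need to trace through the operadic suspension and the explicit pairing on $\calT(E)(3)\otimes\calT(E^\vee\otimes\sgn_2)(3)$ as in \cite[\S7.2.2--7.6.4]{LV}, matching the tree-grafting signs carefully; this is routine but not trivial, and ``substituting the dual generator into the first slot picks up a sign opposite to the second slot'' is an assertion rather than a computation. If you want a self-contained argument, that step needs to be written out.
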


\begin{lemma}[{\cite{Liv},\cite[Prop.~7.6.8]{LV}}]
	\label{duallemma}
	The Koszul dual operad $\calP^!$ of any parametrized one-relation operad $\calP$ is isomorphic to a parameterized one-relation operad; if the operad $\calP$ is defined by Relation \eqref{LRrelation}, the Koszul dual operad is isomorphic to the operad defined by the relation
	\[
	(a_1a_2)a_3 = x_1 a_1(a_2a_3) - x_3 a_1(a_3a_2) - x_2 a_2(a_1a_3) + x_4 a_2(a_3a_1) + x_5 a_3(a_1a_2) - x_6 a_3(a_2a_1) .   
    \]
	In plain words, to obtain $S$, we switch and negate coefficients 2 and 3, and negate coefficient 6.
\end{lemma}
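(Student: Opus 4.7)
The plan is to apply Proposition \ref{koszuldualoperad} directly: by definition $\calP^!=\calT/(\calR^\perp)$, where $\calR\subseteq\calT(3)$ is the $S_3$-submodule generated by the defining relation $\rho$ of $\calP$. I will show that $\calR^\perp$ is itself the cyclic $S_3$-submodule generated by the LR-form element $\rho^!$ displayed in the statement, whence $\calP^!$ is a parametrized one-relation operad with defining relation $\rho^!$.

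The dimension count is immediate. Since the coefficient of $(a_1a_2)a_3$ in $\rho$ equals $1$, no nontrivial $\sigma\in S_3$ can fix $\rho$, so $\calR\cong \mathbb{F}S_3$ is $6$-dimensional; because \eqref{scalarproduct2} is non-degenerate on $\calT(3)$, $\dim \calR^\perp = 12-6=6$. The same reasoning applied to $\rho^!$ shows that its $S_3$-orbit spans a cyclic $6$-dimensional submodule. Consequently, provided $\rho^!\in\calR^\perp$, this orbit must exhaust $\calR^\perp$ by dimension, and $\calP^!$ is defined by $\rho^!$ as claimed.

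The crux is verifying $\rho^!\in\calR^\perp$. By the $S_3$-equivariance of \eqref{scalarproduct2} (up to the sign of the acting permutation), it suffices to check the six identities $(\rho^!,\rho\cdot\sigma) = 0$ for $\sigma\in S_3$, each of which is a quadratic polynomial in $x_1,\ldots,x_6$. I expect the main obstacle to be the sign bookkeeping: the scalar product interweaves three independent sources of signs---the values $(\psi_i,\psi_k)=\pm\delta_{ik}$, the signatures $\varepsilon(\tau_j)$, and the multiplication in $S_3$. A cleaner conceptual framework is to identify $\calT(3)\cong \mathbb{F}S_3\oplus \mathbb{F}S_3$ (pairs of $\psi_1$- and $\psi_2$-coefficients) and rewrite \eqref{scalarproduct2} as a sign-twisted invariant bilinear form; the condition $\rho^!\in\calR^\perp$ then becomes a single identity in the group algebra relating the parameter element $X=\sum_i x_i\tau_i$ of $\rho$ to its dual $X^!=\sum_i\tilde{x}_i\tau_i$. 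The map $X\mapsto X^!$ can be recognized as the composition of the sign-twisted anti-involution $\sigma\mapsto\varepsilon(\sigma)\sigma^{-1}$ with conjugation by the longest element $(13)\in S_3$: the conjugation exchanges the transpositions $(23)$ and $(12)$, producing the swap $x_2\leftrightarrow x_3$, while the sign twist flips the signs on the coefficients $x_2$, $x_3$, $x_6$ of the three transpositions, exactly reproducing the transformation in the statement.
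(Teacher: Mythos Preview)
Your argument contains a genuine error: the claim that the LR-form element $\rho^!$ lies in $\calR^\perp$ is false in general. Take the left-nilpotent case $x_1=\cdots=x_6=0$: then $\rho=\psi_1\otimes e$, so $\calR$ is the entire $\psi_1$-summand of $\calT(3)$. Since the scalar product \eqref{scalarproduct2} is block-diagonal and non-degenerate on each summand, $\calR^\perp$ is exactly the $\psi_2$-summand. But your $\rho^!$ (with all dual coefficients also zero) is again $\psi_1\otimes e$, which pairs nontrivially with itself: $(\rho^!,\rho)=(\psi_1,\psi_1)\varepsilon(e)=1\neq 0$. The same failure occurs for Leibniz: with $\rho=\psi_1\otimes e-\psi_2\otimes e+\psi_2\otimes(12)$ and $\rho^!=\psi_1\otimes e-\psi_2\otimes e-\psi_2\otimes(23)$ one computes $(\rho^!,\rho\cdot(12))=1$. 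So $\calR^\perp$ is \emph{not} generated by an LR-form relation; your dimension count is fine, but the generator you propose simply is not in the space.

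What actually happens---and what the paper's proof makes explicit---is that $\calR^\perp$ always contains a generator of the form $\psi_2\otimes e-\psi_1\otimes(\cdots)$, i.e.\ the matrix $[X\mid I_6]$ rather than $[I_6\mid X']$. The paper then invokes the isomorphism with the \emph{opposite} operad, which swaps $\psi_1\leftrightarrow\psi_2$ (and permutes monomials accordingly), to convert this into LR form without dividing by polynomials in the~$x_i$. The lemma asserts only that $\calP^!$ is \emph{isomorphic} to a parametrized one-relation operad, not that $\calR^\perp$ itself is cyclic on an LR generator. Your group-algebra reformulation is on the right track---indeed, $\calR^\perp=\{\psi_1\otimes\theta(X)b+\psi_2\otimes b:b\in\mathbb{F}S_3\}$ where $\theta(\sigma)=\varepsilon(\sigma)\sigma^{-1}$---and the conjugation by $(13)$ you invoke is precisely the shadow of the opposite-operad map. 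But that step must be applied \emph{after} identifying $\calR^\perp$, not folded into the definition of $\rho^!$ as if $\rho^!$ already lived in $\calR^\perp$.
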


\begin{proof}
	We start from matrix \eqref{rowspace2} whose row space is the module $\calR$ of quadratic relations. By Proposition \ref{koszuldualoperad}, the computation of $\calR^\bot$ is reduced to
	the computation of the nullspace of a modified matrix: we multiply columns $7$--$12$ by $-1$ according to \eqref{scalarproduct}, and then multiply the columns
	$2,3,6,8,9,12$ with odd permutations by $-1$ according to \eqref{scalarproduct2}.
	We compute the RCF; for this we simply multiply the rows with odd permutations by $-1$:
	\begin{equation}
	\label{rowspace3}
	\left[
	\begin{array}{ccccccrrrrrr}
	1 & 0 & 0 & 0 & 0 & 0 & -x_1 &  x_2 &  x_3 & -x_4 & -x_5 &  x_6 \\
	0 & 1 & 0 & 0 & 0 & 0 &  x_2 & -x_1 & -x_5 &  x_6 &  x_3 & -x_4 \\
	0 & 0 & 1 & 0 & 0 & 0 &  x_3 & -x_4 & -x_1 &  x_2 &  x_6 & -x_5 \\
	0 & 0 & 0 & 1 & 0 & 0 & -x_5 &  x_6 &  x_2 & -x_1 & -x_4 &  x_3 \\
	0 & 0 & 0 & 0 & 1 & 0 & -x_4 &  x_3 &  x_6 & -x_5 & -x_1 &  x_2 \\
	0 & 0 & 0 & 0 & 0 & 1 &  x_6 & -x_5 & -x_4 &  x_3 &  x_2 & -x_1
	\end{array}
	\right]
	\end{equation}
	We compute the standard basis for the nullspace of \eqref{rowspace3} by setting free variables to unit vectors
	and solving for leading variables.
	We obtain another matrix whose row space is the nullspace of \eqref{rowspace3}; this is
	the module $(\calR^\bot)$.
	However, this matrix has the form $[ \, X \mid I_6 \, ]$: it is not in row canonical form.
	Computing the RCF of this matrix requires dividing by polynomials in the $x_i$. However, this can be avoided by passing to the isomorphic operad for the opposite algebras, which interchanges $\psi_1$ and $\psi_2$, putting the columns back
	into the original order of the monomials, and then computing the RCF.
	We obtain the following result:
	\begin{equation}
	\label{dualmodule}
	\left[
	\begin{array}{ccccccrrrrrr}
	1 & 0 & 0 & 0 & 0 & 0 &  - x_1 & x_3 & x_2 &  -x_4 &  -x_5 & x_6 \\
	0 & 1 & 0 & 0 & 0 & 0 & x_3 &  -x_1 &  -x_5 & x_6 & x_2 &  -x_4 \\
	0 & 0 & 1 & 0 & 0 & 0 & x_2 &  -x_4 &  -x_1 & x_3 & x_6 &  -x_5 \\
	0 & 0 & 0 & 1 & 0 & 0 &  -x_5 & x_6 & x_3 &  -x_1 &  -x_4 & x_2 \\
	0 & 0 & 0 & 0 & 1 & 0 &  -x_4 & x_2 & x_6 &  -x_5 &  -x_1 & x_3 \\
	0 & 0 & 0 & 0 & 0 & 1 & x_6 &  -x_5 &  -x_4 & x_2 & x_3 &  -x_1
	\end{array}
	\right]
	\end{equation}
	From the first row of \eqref{dualmodule}, we easily read off the coefficients of $S$.
\end{proof}

\section{Linear algebra over polynomial rings}
\label{sectionlinearalgebrapolynomialrings}

Over a field $\mathbb{F}$, to determine whether two $m \times n$ matrices $A$ and $B$ belong to the same orbit
under the left action of $GL_m(\mathbb{F})$, we compute the row canonical forms $\mathrm{RCF}(A)$ and
$\mathrm{RCF}(B)$ and check whether they are equal.
Similarly, for the left-right action of $GL_m(\mathbb{F}) \times GL_n(\mathbb{F})$, we compute
$\mathrm{Smith}(A)$ and $\mathrm{Smith}(B)$.

Over a Euclidean domain, in particular, the ring $\mathbb{F}[x]$ of polynomials in one variable $x$ over a
field $\mathbb{F}$, a modification of Gaussian elimination gives the desired result, since the coordinate ring
is a PID and we can implement the Euclidean algorithm for GCDs using row (or column) operations.
The analogue of the RCF in this case is called the Hermite normal form (HNF).

Once we go beyond Euclidean domains, these computations become much more difficult, for two main reasons:
we can no longer compute GCDs using row operations, and it wouldn't help even if we could, since the
coordinate ring is no longer a PID.
In this setting, the existence of a normal form which determines when two matrices belong to the same orbit
remains an open problem.
We can nonetheless obtain some useful information about a multivariate polynomial matrix by elementary methods.

We consider the problem of computing the rank of an $m \times n$ matrix $A$ with entries in
the ring $\mathbb{F}[x_1,\dots,x_p]$ of polynomials in $p \ge 2$ variables (or parameters) over $\mathbb{F}$.
In one sense, the rank of such a matrix is its rank when regarded as a matrix over the field
$\mathbb{F}(x_1,\dots,x_p)$ of rational functions: since the coordinate ring is now a field again, we can use
Gaussian elimination.
However, crucial information is lost, since we are implicitly assuming that none of the denominators that
arise in the matrix entries during this calculation ever become 0.
Another definition of the rank of the matrix $A$ is as follows.

\begin{definition}
	\label{definitionsubrank}
	Let $A$ be an $m \times n$ matrix over $\mathbb{F}[x_1,\dots,x_p]$ regarded as a parametrized family of
	matrices over $\mathbb{F}$.
	We define the function $A| \colon \mathbb{F}^p \to \mathrm{Mat}_{mn}(\mathbb{F})$:
	for $a_1,\dots,a_p \in \mathbb{F}$ the matrix $A|(a_1,\dots,a_p)$ is obtained from $A$ by setting
	$x_i = a_i$ for $i = 1,\dots,p$.
	Composing $A|$ with the rank on $\mathrm{Mat}_{mn}(\mathbb{F})$ gives the \emph{substitution rank}
	function:
	\[
	\mathrm{subrank}_A = \mathrm{rank} \circ A| \colon \mathbb{F}^p \to \{ \, 0, 1, 2, \dots, \min(m,n) \, \}.
	\]
	The inverse images of the ranks $0 \le r \le \min(m,n)$ define the \emph{inverse rank} function:
	\[
	\mathrm{invrank}_A( r )
	=
	\big\{ \, (a_1,\dots,a_p) \in \mathbb{F}^p \mid \mathrm{subrank}_A(a_1,\dots,a_p) = r \, \big\}.
	\]
We define the \emph{minimal rank} $r_\mathrm{min}$ as follows:
	\[
	r_\mathrm{min}(A) =
	\min \big\{ \, r \mid 0 \le r \le \min(m,n), \; \mathrm{invrank}_A( r ) \ne \emptyset \, \big\}.
	\]
\end{definition}

The following very simple result will be useful to us later.

\begin{proposition}
	\label{fullranklemma}
	Let $A$ be an $m \times n$ matrix over $\mathbb{F}[x_1,\dots,x_p]$.
	If there exist elements $a_1,\dots,a_p \in \mathbb{F}$ such that the matrix
	$A|(a_1,\dots,a_p) \in \mathrm{Mat}_{mn}(\mathbb{F})$ has full rank $r = \min(m,n)$,
	then $A$ has full rank over the field $\mathbb{F}(x_1,\dots,x_p)$ of rational functions.
\end{proposition}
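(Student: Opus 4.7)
The plan is to reduce the statement to the standard characterization of matrix rank via nonzero minors. Recall that for any matrix over a field, the rank equals the largest integer $r$ such that some $r \times r$ submatrix has nonzero determinant. This characterization applies both over $\mathbb{F}$ and over the field $\mathbb{F}(x_1,\dots,x_p)$ of rational functions, so to show that $A$ has full rank $r = \min(m,n)$ over $\mathbb{F}(x_1,\dots,x_p)$, it suffices to exhibit a single $r \times r$ submatrix of $A$ whose determinant is nonzero as an element of $\mathbb{F}(x_1,\dots,x_p)$.

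First I would apply the minor characterization to the specialized matrix $B := A|(a_1,\dots,a_p)$. By hypothesis $B$ has rank $r = \min(m,n)$, so there exist row indices $i_1 < \cdots < i_r$ and column indices $j_1 < \cdots < j_r$ such that the $r \times r$ submatrix $B'$ of $B$ cut out by these indices has $\det(B') \neq 0$ in $\mathbb{F}$.

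Next I would lift this back to $A$: let $A'$ be the $r \times r$ submatrix of $A$ cut out by the same row and column indices. Its entries lie in $\mathbb{F}[x_1,\dots,x_p]$, so $\det(A')$ is a polynomial $P \in \mathbb{F}[x_1,\dots,x_p]$. Since substitution $x_i \mapsto a_i$ is a ring homomorphism and therefore commutes with the expansion of the determinant, we have $P(a_1,\dots,a_p) = \det(B') \neq 0$. Consequently $P$ is not the zero polynomial, and hence is a nonzero element of the field $\mathbb{F}(x_1,\dots,x_p)$. This exhibits an $r \times r$ submatrix of $A$ whose determinant is nonzero over $\mathbb{F}(x_1,\dots,x_p)$, so $\mathrm{rank}_{\mathbb{F}(x_1,\dots,x_p)}(A) \ge r = \min(m,n)$, which is the full rank.

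There is no real obstacle: the only nontrivial ingredient is the minor characterization of rank, and the proof is essentially the observation that specialization cannot turn the zero polynomial into a nonzero scalar. The contrapositive is perhaps even more transparent — if $A$ failed to have full rank over $\mathbb{F}(x_1,\dots,x_p)$, then every $r \times r$ minor of $A$ would be the zero polynomial, and thus every specialization would yield a matrix of rank strictly less than $r$, contradicting the hypothesis.
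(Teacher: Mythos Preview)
Your proof is correct and essentially the same as the paper's: both rely on the minor characterization of rank and the fact that specialization commutes with taking determinants. The paper phrases it via the contrapositive (which you also note at the end), observing that if $A$ were not of full rank then every $r\times r$ minor would vanish identically and hence also after specialization.
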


\begin{proof}
	It is well known that the rank of an $m \times n$ matrix over $\mathbb{F}$ is $r$ if and only if two conditions hold:
	\begin{itemize}
		\item
		At least one $r \times r$ minor is not 0.
		\item
		Every $(r+1) \times (r+1)$ minor is 0.
	\end{itemize}
	Therefore, if $A$ does not have full rank, then all minors of $A$ of size $r$ vanish, which of course would guarantee that all those minors vanish after specialisation to $(a_1,\ldots,a_p)$, when they become the minors of $A|(a_1,\dots,a_p)$.
\end{proof}

\begin{definition}
	\label{definitiondetideal}
	Let $A$ be an $m \times n$ matrix over $\mathbb{F}[x_1,\dots,x_p]$.
	The \emph{determinantal ideals} $DI_r(A)$ for $r = 0,\dots,\min(m,n)$ are defined as follows:
	$DI_0(A) = \mathbb{F}[x_1,\dots,x_p]$, and if $r \ge 1$ then $DI_r(A)$ is the ideal in $\mathbb{F}[x_1,\dots,x_p]$ generated
	by all $r \times r$ minors of $A$.
\end{definition}

In terms of determinantal ideals, we can reformulate the classical formula for the rank of a matrix as follows.

\begin{proposition}\label{prop:ranklemma}
	Let $A$ be an $m \times n$ matrix over $\mathbb{F}[x_1,\dots,x_p]$. For every value of $r$ not exceeding $\min(m,n)$ we have
	\[
	\mathrm{invrank}_A( r ) = V( DI_{r+1} ) \setminus V( DI_r ).
	\]	
\end{proposition}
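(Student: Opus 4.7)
The plan is to derive this from the classical rank characterization already invoked in the proof of Proposition~\ref{fullranklemma}: for a matrix $B$ with entries in a field $\mathbb{F}$, the rank of $B$ equals $r$ if and only if some $r\times r$ minor of $B$ is nonzero and every $(r+1)\times(r+1)$ minor of $B$ vanishes. I would apply this criterion to the specialised matrix $B = A|(a_1,\dots,a_p)$ for each point $(a_1,\dots,a_p)\in\mathbb{F}^p$.

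The key observation is that evaluation commutes with taking minors: for any index sets $I\subseteq\{1,\dots,m\}$ and $J\subseteq\{1,\dots,n\}$ with $|I|=|J|=k$, the corresponding minor $\det A_{I,J}$ is a polynomial in $\mathbb{F}[x_1,\dots,x_p]$, and its value at $(a_1,\dots,a_p)$ coincides with the minor $\det (A|(a_1,\dots,a_p))_{I,J}$, because substitution is a ring homomorphism. Consequently, saying that every $k\times k$ minor of $A|(a_1,\dots,a_p)$ vanishes is the same as saying that every generator of $DI_k(A)$ vanishes at $(a_1,\dots,a_p)$, which is precisely the condition $(a_1,\dots,a_p)\in V(DI_k(A))$.

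Putting these pieces together, $\mathrm{subrank}_A(a_1,\dots,a_p)=r$ holds if and only if every $(r+1)\times(r+1)$ minor of $A$ vanishes at $(a_1,\dots,a_p)$ and at least one $r\times r$ minor does not, which is equivalent to $(a_1,\dots,a_p)\in V(DI_{r+1}(A))$ and $(a_1,\dots,a_p)\notin V(DI_r(A))$. This is exactly the set-theoretic identity $\mathrm{invrank}_A(r)=V(DI_{r+1})\setminus V(DI_r)$ claimed.

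No step is genuinely an obstacle: the only points worth highlighting are the compatibility of minors with specialisation and the boundary conventions (the case $r=0$ is handled by $DI_0(A)=\mathbb{F}[x_1,\dots,x_p]$, so $V(DI_0)=\emptyset$, which correctly encodes that the zero matrix is the only rank-$0$ specialisation; and for $r=\min(m,n)$ the ideal $DI_{r+1}$ is the zero ideal, so $V(DI_{r+1})=\mathbb{F}^p$, correctly describing full-rank points as the complement of $V(DI_r)$).
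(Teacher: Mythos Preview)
Your proof is correct. The paper itself does not supply a proof of this proposition; it simply introduces it as the reformulation ``in terms of determinantal ideals'' of the classical rank criterion quoted in the proof of Proposition~\ref{fullranklemma}. Your argument spells out exactly that reformulation: specialisation commutes with taking minors, so the vanishing of all $(r{+}1)$-minors at a point is membership in $V(DI_{r+1})$ and non-vanishing of some $r$-minor is non-membership in $V(DI_r)$. The boundary checks for $r=0$ and $r=\min(m,n)$ are the right way to handle the conventions in Definition~\ref{definitiondetideal}.
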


The advantage of using determinantal ideals is that they allow us to study the rank of a matrix using only
ring operations (without division).
The classical theory of determinantal ideals is concerned almost exclusively with the homogeneous case,
in which every minor is a homogeneous polynomial; see~\cite{MR}.
Since many entries of the cubic relation matrix $M$ (to be defined in the next section) equal 1,
the determinantal ideals we study in  what follows will be inhomogeneous.
We could reformulate our problem in homogeneous terms by introducing a new parameter $x_0$ to play the role of
the coefficient of $(a_1a_2)a_3$ in Relation \eqref{LRrelation}.
This leads into the theory of sparse determinantal ideals \cite{Boo}.
However, having many leading 1s in the matrix will be very useful from a computational point of view.

From now on, most of our computations require a choice of monomial order.

\begin{definition}
	For an element $m$ of the monomial basis of $\mathbb{F}[x_1,\dots,x_p]$ we write:
	\[
	m = \prod_{k=1}^p x_k^{e_k},
	\qquad
	e(m) = [ e_1, \dots, e_p ],
	\qquad
	\deg(m) = \sum_{k=1}^p e_k.
	\]
	The \emph{graded reverse lexicographic order} (called \texttt{grevlex} in \texttt{Maple}, \texttt{Magma} and \texttt{Macaulay}, \texttt{degrevlex} in \texttt{sage}, and \texttt{dp} in \texttt{Singular}) is defined by:
	$m \prec m'$ if and only if
	\begin{itemize}
		\item  $\deg(m) < \deg(m')$, or
		\item $\deg(m) = \deg(m')$, and $e_k > e'_k$ where $k$ is the smallest index such that $e_k \ne e'_k$.
	\end{itemize}
	Note that $x_1 \prec \cdots \prec x_p$.
	
	The leading monomial $\mathrm{LM}(f)$ of a nonzero polynomial $f \in \mathbb{F}[x_1,\dots,x_p]$ is the
	greatest with respect to $\prec$, and $\mathrm{LC}(f)$ is the coefficient of $\mathrm{LM}(f)$.
\end{definition}

In what follows, we shall use this ordering of monomials for ordering lists of polynomials (term by term).

\begin{definition}
	\label{definitiongrobner}
	Given a monomial order $\prec$, every ideal $J \subseteq \mathbb{F}[x_1,\dots,x_p]$ contains a
	(finite) ordered set $G = \{f_1,\dots,f_t\}$ of (nonzero) polynomials, called a \emph{Gr\"obner basis}
	with respect to $\prec$, satisfying the following conditions:
	\begin{itemize}
		\item
		$J = ( G )$: the polynomials $f_1,\dots,f_t$ generate $J$.
		\item
		$( \, \{ \, \mathrm{LM}(f) \mid f \in J \, \} \, ) =
		( \, \{ \, \mathrm{LM}(f) \mid f \in G \, \} \, )$: the ideal generated by the leading monomials
		of the elements of $J$ is generated by the leading monomials of the elements of $G$.
	\end{itemize}
	A \emph{reduced} Gr\"obner basis satisfies the following additional conditions:
	\begin{itemize}
		\item
		The generators are monic: $\mathrm{LC}(f) = 1$ for every $f \in G$.
		\item
		For every $f \in G$ no monomial of any $f' \in G \setminus \{f\}$ is divisible by $\mathrm{LM}(f)$.
	\end{itemize}
	Every ideal has a unique reduced Gr\"obner basis with respect to a given monomial order.
	Of the many books on Gr\"obner bases, Cox et al.\cite{CLO1,CLO2} are the most approachable.
\end{definition}

\begin{definition}
	For an ideal $J \subseteq \mathbb{F}[x_1,\dots,x_p]$, the \emph{zero set} $V(J)$ is
	the set of points in $\mathbb{F}^p$ which are solutions to every polynomial in $J$:
	\[
	V(J)
	=
	\big\{ \,
	(a_1,\dots,a_p) \in \mathbb{F}^p
	\,\big|\,
	f(a_1,\dots,a_p) = 0 \; \text{for all} \; f \in J
	\, \big\}.
	\]
	The ideal $I(S)$ of the subset $S \subseteq \mathbb{F}^p$ consists of all polynomials which vanish
	on $S$:
	\[
	I(S)
	=
	\big\{ \,
	f \in \mathbb{F}[x_1,\dots,x_p]
	\,\big|\,
	f(a_1,\dots,a_p) = 0 \; \text{for all} \; (a_1,\dots,a_p) \in S
	\, \big\}.
	\]
	Clearly $J \subseteq I(V(J))$.
	The \emph{radical} of $J$ is the ideal $\sqrt{J} = I(V(J))$.
	We say that $J$ is a \emph{radical ideal} if $J = \sqrt{J}$.
	For our purposes, the value of these concepts is that often $\sqrt{J}$ is much larger than
	$J$ and has a much smaller and simpler Gr\"obner basis.
\end{definition}

\begin{algorithm}
	\label{partialsmithform}
	For a matrix whose entries are multivariate polynomials, this algorithm produces a
	\emph{partial Smith form} based on elimination using nonzero scalar entries.
	The basic idea is rather naive, but this algorithm will be useful in reducing the size of matrices
	before computing determinantal ideals.
	
	Input: an $m \times n$ matrix $R$ with entries in $\mathbb{F}[x_1,\dots,x_p]$.
	
	Output: an $m \times n$ matrix $S$ equivalent to $R$ over $\mathbb{F}[x_1,\dots,x_p]$ in the
	sense that $S = URV$ where $U$ ($m \times m$) and $V$ ($n \times n$) are invertible matrices over
	$\mathbb{F}[x_1,\dots,x_p]$, that is, $\det(U)$ and $\det(V)$ are nonzero scalars.
	Furthermore, $S$ consists of two diagonal blocks: an identity matrix and a block $B$ in which no entry is a
	nonzero scalar.
	\begin{enumerate}
		\item
		Set $S \leftarrow R$.
		Set $k \leftarrow 1$.
		\item
		While $s_{ij} \in \mathbb{F} \setminus \{0\}$ for some $i, j \ge k$ do:
		\begin{itemize}
			\item
			Find the least $i \ge k$ for which $s_{ij} \in \mathbb{F} \setminus \{0\}$ for some $j \ge k$.
			\item
			If $i \ne k$ then interchange rows $i$ and $k$ of $S$.
			\item
			Find the least $j \ge k$ for which $s_{kj} \in \mathbb{F} \setminus \{0\}$.
			\item
			If $j \ne k$ then interchange columns $j$ and $k$ of $S$.
			\item
			If $s_{kk} \ne 1$ then divide row $k$ of $S$ by $s_{kk}$.
			\item
			For $i = k+1,\dots,m$ do: subtract $s_{ik}$ times row $k$ from row $i$.
			\item
			For $j = k+1,\dots,n$ do: subtract $s_{kj}$ times column $k$ from column $j$.
			\item
			Set $k \leftarrow k+1$.
		\end{itemize}
		\item
		Return $S$.
	\end{enumerate}
\end{algorithm}

\section{General results on parametrized one-relation operads}
\label{cubicrelationmatrixsection}

\subsection{The cubic relation matrix \texorpdfstring{$M$}{M}}

\begin{notation}
	\label{120basis}
	The monomial basis of the quadratic space $\calT(3)$ consists of the five elements from Example \ref{ex:basismonomials}.
	We replace the argument symbols $\arg$ by the identity permutation of the variables $a_1,a_2,a_3,a_4$ and obtain a
	generating set for the $S_4$-module $\calT(4)$:
\[
    \gamma_1 = ((a_1a_2)a_3)a_4, \quad
	\gamma_2 = (a_1(a_2a_3))a_4, \quad
	\gamma_3 = (a_1a_2)(a_3a_4), \quad 
	\gamma_4 = a_1((a_2a_3)a_4), \quad
	\gamma_5 = a_1(a_2(a_3a_4)).
\]
	To each generator $\gamma_1,\dots,\gamma_5$ we apply all 24 permutations from $S_4$ to obtain a linear basis
	of $\calT(4)$.
	We write these basis monomials using the notation $[\tau]_q = \tau \cdot \gamma_q$
	for $\tau \in S_4$ and $q \in \{1,\dots,5\}$.
	We impose a total order by defining monomial $j \in \{1,\dots,120\}$ to be $[\tau]_q$ where
	$j-1 = 24(q-1) + (r-1)$ and $r \in \{1,\dots,24\}$ and $\tau$ is permutation $r$ in lex order.
\end{notation}

Let us consider the general relation of the type \eqref{LRrelation}
\begin{align*}
\rho(a_1,a_2,a_3)=& \; (a_1a_2)a_3 - x_1 a_1(a_2a_3) - x_2 a_1(a_3a_2)  %\\  &\;   
- x_3 a_2(a_1a_3) - x_4 a_2(a_3a_1) - x_5 a_3(a_1a_2) - x_6 a_3(a_2a_1).
\end{align*}
In what follows, we denote by $\calJ$ the operad ideal in $\calT$ generated by $\rho$.
We regard the coefficients $x_1,\dots,x_6$ as indeterminates, and so $\calT$ has become an operad not
over $\mathbb{F}$ but instead over the polynomial ring $\mathcal{C} = \mathbb{F}[x_1,\dots,x_6]$.
That is, we replace each $S_n$-module $\calT(n)$ over $\mathbb{F}$ by the
tensor product $\mathcal{C} \otimes\calT(n)$ over $\mathcal{C}$ where
every $\tau \in S_n$ acts as the identity map on $\mathcal{C}$.
Thus $\calT(n)$ has changed from a vector space of dimension $\frac{(2k-2)!}{(k-1)!}$
(Lemma \ref{symmetricdimension}) to a free $\mathcal{C}$-module of the same rank.
In particular, $\calT(4)$ is a free $\mathcal{C}$-module of rank $120$.

According to Definition \ref{definitionconsequences} the elements generate the $S_4$-module $\calJ(4) \subseteq \calT(4)$:
 \[
		\begin{array}{l}
		\rho(a_1a_2,a_3,a_4)
		=
		((a_1a_2)a_3)a_4 - x_1 (a_1a_2)(a_3a_4) - x_2 (a_1a_2)(a_4a_3) 
		\\
		\qquad\qquad
		- x_3 a_3((a_1a_2)a_4) - x_4 a_3(a_4(a_1a_2)) - x_5 a_4((a_1a_2)a_3) - x_6 a_4(a_3(a_1a_2)),
		\\
		\rho(a_1,a_2a_3,a_4)
		=
		(a_1(a_2a_3))a_4 - x_1 a_1((a_2a_3)a_4) - x_2 a_1(a_4(a_2a_3))
		\\
		\qquad\qquad
		 - x_3 (a_2a_3)(a_1a_4) - x_4 (a_2a_3)(a_4a_1) - x_5 a_4(a_1(a_2a_3)) - x_6 a_4((a_2a_3)a_1),
		\\
		\rho(a_1,a_2,a_3a_4)
		=
		(a_1a_2)(a_3a_4) - x_1 a_1(a_2(a_3a_4)) - x_2 a_1((a_3a_4)a_2)
		\\
		\qquad\qquad
		 - x_3 a_2(a_1(a_3a_4)) - x_4 a_2((a_3a_4)a_1) - x_5 (a_3a_4)(a_1a_2) - x_6 (a_3a_4)(a_2a_1),
		\\
		\rho(a_1,a_2,a_3)a_4
		=
		((a_1a_2)a_3)a_4 - x_1 (a_1(a_2a_3))a_4 - x_2 (a_1(a_3a_2))a_4 
		\\
		\qquad\qquad
		- x_3 (a_2(a_1a_3))a_4 - x_4 (a_2(a_3a_1))a_4 - x_5 (a_3(a_1a_2))a_4 - x_6 (a_3(a_2a_1))a_4,
		\\
		a_1\rho(a_2,a_3,a_4)
		=
		a_1((a_2a_3)a_4) - x_1 a_1(a_2(a_3a_4)) - x_2 a_1(a_2(a_4a_3)) 
		\\
		\qquad\qquad
		- x_3 a_1(a_3(a_2a_4)) - x_4 a_1(a_3(a_4a_2)) - x_5 a_1(a_4(a_2a_3)) - x_6 a_1(a_4(a_3a_2)).
		\end{array}
 \]
Using the notation for basis elements described above, these expansions can be written as
\begin{equation}\label{consequences-arity-4-expansion}
\begin{array}{l}
[1234]_1 - x_1 [1234]_3 - x_2 [1243]_3 - x_3 [3124]_4 - x_4 [3412]_5 - x_5 [4123]_4 - x_6 [4312]_5
\\
{}
[1234]_2 - x_1 [1234]_4 - x_2 [1423]_5 - x_3 [2314]_3 - x_4 [2341]_3 - x_5 [4123]_5 - x_6 [4231]_4
\\
{}
[1234]_3 - x_1 [1234]_5 - x_2 [1342]_4 - x_3 [2134]_5 - x_4 [2341]_4 - x_5 [3412]_3 - x_6 [3421]_3
\\
{}
[1234]_1 - x_1 [1234]_2 - x_2 [1324]_2 - x_3 [2134]_2 - x_4 [2314]_2 - x_5 [3124]_2 - x_6 [3214]_2
\\
{}
[1234]_4 - x_1 [1234]_5 - x_2 [1243]_5 - x_3 [1324]_5 - x_4 [1342]_5 - x_5 [1423]_5 - x_6 [1432]_5
\end{array}
\end{equation}
The following list of $120$ relations generates $\calJ(4)$ as a $\mathcal{C}$-module:
\[
\rho(a_1a_2,a_3,a_4) . \tau,  \quad 
\rho(a_1,a_2a_3,a_4) . \tau, \quad 
\rho(a_1,a_2,a_3a_4) . \tau, \quad 
\rho(a_1,a_2,a_3)a_4 . \tau, \quad 
a_1\rho(a_2,a_3,a_4) . \tau ,
\]
here $\tau \in S_4$ is an arbitrary permutation.
These relations can be represented as row vectors of dimension 120 over $\mathcal{C}$ using the total
order of Notation \ref{120basis}; each vector has the entries $\{ 1, -x_1, \dots, -x_6 \}$ and 113 zeros.
We sort these row vectors into semi-triangular form using the following \emph{total order} $x \prec y$ on
row vectors of the same but arbitrary length:
\begin{itemize}
	\item
	Let $i, j \ge 1$ be the least integers for which $x_i \ne 0$ and $y_j \ne 0$.
	\item
	If $i \ne j$ then $x \prec y$ if and only if $i < j$.
	\item
	If $i = j$ but $x_i \ne y_j$ then $x \prec y$ if and only if $x_i \prec y_j$ according to
	\[
	1 \;\prec\; -x_1 \;\prec\;  -x_2 \;\prec\; -x_3 \;\prec\; -x_4 \;\prec\; -x_5 \;\prec\; -x_6.
	\]
	\item
	If $i = j$ and $x_i = y_j$ then $x \prec y$ if and only if $x' \prec y'$ where $x'$ (resp.~$y'$) is
	obtained from $x$ (resp.~$y$) by deleting the first $i$ entries.
\end{itemize}

\begin{definition} \label{definitionrelationmatrix}
	The \emph{cubic relation matrix} $M = ( m_{ij} )$ is the square matrix of size $120$ in which entry $m_{ij}$
	is the coefficient of the $j$-th basis monomial (Notation \ref{120basis}) in the $i$-th row vector in the
	list of consequences of the relation $\rho$ sorted as above.
\end{definition}

\begin{lemma}
	\label{basicMinfo}
	The cubic relation matrix $M$ has minimal rank $84$.
	The reduced Gr\"obner basis for the first nontrivial determinantal ideal $DI_{85}(M)$ is as follows:
	\[
	x_2 + x_3, \quad x_1 + x_4, \quad x_6, \quad
	x_1^2, \quad x_2 x_1, \quad x_1 x_5 + x_2, \quad x_2^2, \quad x_5 x_2 + x_1, \quad x_5^2 - 1.
	\]
	The reduced Gr\"obner basis for the radical $\sqrt{DI_{85}(M)}$ is as follows:
	\[
	x_1, \quad x_2, \quad x_3, \quad x_4, \quad x_6, \quad x_5^2-1.
	\]
	The only parameter values which produce the minimal rank are $[0,0,0,0,\pm 1,0]$,
	and for these values we obtain the maximal dimension $\mathrm{nullity}(M) = 36$ for the $S_4$-module
	$\calT(4)/\calJ(4)$.
\end{lemma}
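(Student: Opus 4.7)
The plan has three stages: a substantial reduction of the $120 \times 120$ matrix to tractable size, an upper bound on $r_{\min}(M)$ via direct specialization, and a matching lower bound via a Gröbner basis computation on the determinantal ideal $DI_{85}(M)$.

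Stage 1 (Reduction). Each of the five cubic consequences displayed in \eqref{consequences-arity-4-expansion} contains a basis monomial $[1234]_q$ with coefficient $+1$, and the row-sort specified in Definition~\ref{definitionrelationmatrix} places these scalar entries in distinct semi-triangular positions throughout $M$. I would apply Algorithm~\ref{partialsmithform}, which uses only scalar pivots and unimodular row/column operations; the output is a matrix $S = UMV$ of block-diagonal form $\mathrm{diag}(I_N, B)$, where no entry of $B$ is a nonzero scalar. Since $U, V$ are unimodular over $\calC$, rank is preserved at every specialization, so $\mathrm{rank}(M) = N + \mathrm{rank}(B)$ and $DI_r(M) = DI_{r-N}(B)$ for every $r \ge N$. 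All subsequent computations can then be performed on the much smaller block $B$, of size approximately $120-N$.

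Stage 2 (Upper bound). I would specialize the parameter vector to $(0,0,0,0,1,0)$ and compute $\mathrm{rank}\bigl(M|(0,0,0,0,1,0)\bigr)$ over $\mathbb{F}$ by ordinary Gaussian elimination, expecting to obtain exactly $84$. By Proposition~\ref{prop:ranklemma} this places $(0,0,0,0,1,0)$ inside $V(DI_{85}(M))$, so $DI_{85}(M) \ne \calC$ and $r_{\min}(M) \le 84$. Repeating the specialization at $(0,0,0,0,-1,0)$ produces a second rank-$84$ point, and simultaneously confirms $\mathrm{nullity}(M) = 36$ at both of these values.

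Stage 3 (Lower bound via determinantal ideals). Using a computer algebra system such as \texttt{Singular} or \texttt{Macaulay2}, I would compute a reduced \texttt{grevlex} Gröbner basis for $DI_{85-N}(B)$ in $\calC = \mathbb{F}[x_1,\dots,x_6]$ and, translating via the identification $DI_{85}(M) = DI_{85-N}(B)$, verify that it agrees term-for-term with the nine-polynomial basis stated in the lemma. Applying a standard radical algorithm to the resulting ideal should reduce it to $\{x_1, x_2, x_3, x_4, x_6, x_5^2-1\}$, whose zero set over the algebraically closed field $\mathbb{F}$ is precisely the pair $\{(0,0,0,0,\pm 1,0)\}$. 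Since $V(DI_{85}(M)) = V(\sqrt{DI_{85}(M)})$, at every other parameter point some $85 \times 85$ minor of $M$ is nonzero, forcing $\mathrm{rank}(M) \ge 85$ there. Combined with Stage 2, this pins down $r_{\min}(M) = 84$ attained exactly at the two points $(0,0,0,0,\pm 1,0)$, with nullity $36$.

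The main obstacle is purely computational. Even after the identity block is stripped off in Stage 1, the ideal of $(85-N) \times (85-N)$ minors of $B$ is generated by a very large family of polynomials of comparable degrees in six variables, and the intermediate coefficient swell during Gröbner basis reduction can be severe. Exploiting the sparsity of $M$ inherited by $B$, choosing the \texttt{grevlex} order (which tends to keep intermediate bases small for ideals of small dimension as here), and performing the radical computation only after the primary Gröbner basis has been stabilized, are the key points that make the computation terminate in reasonable time; a direct attack on the $85 \times 85$ minors of $M$ without the reduction of Stage 1 would be infeasible.
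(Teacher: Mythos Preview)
Your approach is essentially the paper's: apply Algorithm~\ref{partialsmithform}, pass to the block $B$, and compute the relevant determinantal ideal and its radical. However, you miss the single observation that makes the whole computation easy, and this leads your final paragraph in the wrong direction.

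When you actually run Algorithm~\ref{partialsmithform} on $M$, the identity block has size exactly $N = 84$. This number is not an incidental output but the heart of the proof: it immediately gives $\mathrm{rank}(M|(a_1,\dots,a_6)) \ge 84$ for \emph{every} specialization (so your Stage~2 specialization is not needed for a bound), and it forces $DI_{85}(M) = DI_{85-84}(B) = DI_1(B)$. But $DI_1(B)$ is simply the ideal generated by the \emph{entries} of the $36 \times 36$ block $B$ --- there are no minors of size $>1$ to compute at all. Thus the ``main obstacle'' you describe, namely a large family of $(85-N)\times(85-N)$ minors with severe coefficient swell, does not exist here; one just feeds the $1296$ entries of $B$ (most of which are nonzero but of modest degree) into a Gr\"obner basis routine, obtains the nine-element basis stated in the lemma, takes the radical, and reads off the two-point zero set. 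The paper's Remark~\ref{blockremark} records exactly this: the entries of $B$ are already the generators, and it is somewhat surprising that they collapse to such a small basis.

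So your plan is correct, but Stage~2 is redundant and the anticipated difficulty in Stage~3 evaporates once you know $N=84$.
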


\begin{proof}
The following properties of $M$ were obtained using the computer algebra system \texttt{Maple}.
Applying Algorithm \ref{partialsmithform} to the cubic relation matrix $M$ produces $\mathrm{diag}(I_{84},B)$
where $B$ is a $36 \times 36$ matrix in which no entry is a nonzero scalar.
From this is follows that for all $a_1,\dots,a_6 \in \mathbb{F}$ the matrix $M|(a_1,\dots,a_6)$ has rank
$\ge 84$, and $DI_k(M)=DI_{k-84}(B)$ for $k>84$. This formula easily allows to compute the Gr\"obner bases 
for the ideal $DI_{85}(M)=DI_1(B)$ and for its radical, which are as stated in the Lemma. Examining these Gr\"obner bases, 
we see that $B=0$ only for the values of parameters $[0,0,0,0,\pm 1,0]$, which completes the proof. 
\end{proof}

\begin{remark}
	\label{blockremark}
The computations for the proof of Lemma \ref{basicMinfo} also provided the following information about the
block $B$.
Every entry $f$ of $B$ has integer coefficients, and only 99 of the 1296 entries are zero. After normalizing these polynomials
by making all leading coefficients equal to $1$, there are 709 distinct polynomials with 665 distinct irreducible factors;
in fact, 492 of these polynomials are irreducible. In a way, it is remarkable that the ideal generated by these polynomials has such a small and simple
Gr\"obner basis.
\end{remark}

\begin{remark}\label{hugenumbers}
The other determinantal ideals are much harder.
A generating set for $DI_r(B)$ contains $C(36,r)^2$ determinants of $r \times r$ submatrices.
In particular, regularity requires $\mathrm{nullity}(M) = 24$ and hence $\mathrm{rank}(B) = 12$.
To determine the parameter values satisfying this condition, Proposition \ref{prop:ranklemma} tells us to find the
zero sets $V(DI_r(B))$ for $r = 12, 13$.
For $r = 12$ (and worse for $r = 13$) we must evaluate more than $10^{18}$ minors, and $12 \times 12$
determinants over $\mathbb{F}[x_1,\dots,x_6]$ are not easy to compute.
Even supposing that this was possible, we would still have to compute Gr\"obner bases for the two ideals,
and hope that these would make it possible to solve explicitly for the zero sets.
We will be able to overcome these obstacles using the representation theory of the symmetric group,
starting in Section \ref{representationsection}.
\end{remark}

\subsection{Nilpotency theorem}

\begin{theorem}\label{th:niltheorem}
	Let $\mathcal{N}$ be the set of all points $\mathbf{a}$ in the parameter space $\,\mathbb{F}^6$ for which
	the operad $\calO_\mathbf{a}$ is nilpotent of index $3$.
	Then $\mathcal{N} $ is a Zariski open subset of the parameter space $\mathbb{F}^6$.
\end{theorem}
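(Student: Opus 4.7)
The plan is to reinterpret nilpotency of index $3$ as a full-rank condition on the cubic relation matrix $M$, and then invoke the determinantal-ideal framework of Section~\ref{sectionlinearalgebrapolynomialrings} to conclude openness.

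First I would check that for a parametrized one-relation operad, the condition ``nilpotent of index exactly $3$'' is equivalent to the single equation $\calO_\mathbf{a}(4)=0$. The component $\calO_\mathbf{a}(1)$ is spanned by the identity and $\calO_\mathbf{a}(2)$ by the generating operation, so neither vanishes; moreover $\dim_{\mathbb{F}}\calT(3)=12$ while the $S_3$-submodule generated by $\rho$ has dimension at most $|S_3|=6$, so $\calO_\mathbf{a}(3)\ne 0$ as well. Hence the nilpotency index of $\calO_\mathbf{a}$ is always at least $3$, whenever the operad is nilpotent at all. Combined with the observation recorded just before Example~\ref{antiassociativeexample} that $\calJ(k+1)=\calT(k+1)$ at a single $k$ propagates to all larger $k$, this reduces nilpotency of index $3$ to the single vanishing $\calO_\mathbf{a}(4)=0$.

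Next, by Definition~\ref{definitionrelationmatrix} the specialized matrix $M|_\mathbf{a}$ is a $120\times 120$ matrix over $\mathbb{F}$ whose row space is exactly $\calJ_\mathbf{a}(4)\subseteq\calT(4)$, and by Lemma~\ref{symmetricdimension} we have $\dim_{\mathbb{F}}\calT(4)=120$. Therefore $\calO_\mathbf{a}(4)=\calT(4)/\calJ_\mathbf{a}(4)$ vanishes if and only if $\mathrm{rank}(M|_\mathbf{a})=120$, i.e.
\[
\mathcal{N} \;=\; \mathrm{invrank}_M(120).
\]

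Finally, Proposition~\ref{prop:ranklemma} gives $\mathrm{invrank}_M(120)=V(DI_{121}(M))\setminus V(DI_{120}(M))$. With the usual convention that $DI_{121}(M)$ for a $120\times 120$ matrix is the zero ideal (there are no $121\times 121$ minors), its zero set is all of $\mathbb{F}^6$, and we conclude
\[
\mathcal{N}\;=\;\mathbb{F}^6\setminus V(DI_{120}(M)),
\]
which is Zariski open by definition of the Zariski topology on $\mathbb{F}^6$. There is no genuine obstacle in the argument: the whole proof is a dictionary between the operadic statement ``all arity-$4$ operations vanish'' and the polynomial-rank statement ``the determinant of $M$ does not vanish''. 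The only step needing mild care is the bookkeeping that rules out indices $1$ and $2$, which is what ensures the open set $\mathcal{N}$ parametrizes operads of index exactly~$3$ rather than some smaller index.
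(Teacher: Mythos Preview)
Your argument is correct and follows essentially the same route as the paper: both identify $\mathcal{N}$ with the non-vanishing locus of $\det M$, i.e.\ the complement of the hypersurface $V(DI_{120}(M))$. The one notable difference is that the paper also invokes the anti-associative example (Example~\ref{antiassociativeexample}) and Proposition~\ref{fullranklemma} to show that $\det M$ is not the zero polynomial, hence that $\mathcal{N}$ is nonempty (and therefore dense); your proof establishes openness alone, which is all the literal statement asks, and you are more explicit than the paper in ruling out nilpotency index $<3$.
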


\begin{proof}
	Example \ref{antiassociativeexample} showed that the anti-associative identity is a special case of Relation \eqref{LRrelation} and that the anti-associative operad is nilpotent of index $3$.
	Hence setting $x_1 = -1$ and $x_2 = \dots = x_6 = 0$ in $M$ produces an invertible
	matrix over $\mathbb{F}$.
	It follows from Proposition \ref{fullranklemma} that the cubic relation matrix $M$ is invertible over the field of rational functions $\mathbb{F}(x_1,\dots,x_6)$. For $\mathbf{a}=(a_1,\ldots,a_6)\in\mathbb{F}^6$, the parametrized one-relation operad $\calO_\mathbf{a}$ is
	nilpotent of index $3$ if and only if $\det(M|(a_1,\dots,a_6)) \ne 0$; this condition defines a Zariski open subset in the space of parameters.
\end{proof}

\subsection{Towards classifying regular parametrized one-relation operads}

\begin{lemma}
	If the operad $\calO_\mathbf{x}$ is regular for the values $x_k = a_k \in \mathbb{F}$ $(1 \le k \le 6)$ then
	\begin{itemize}
		\item
		$\mathrm{rank}(M) = 96$ and $\mathrm{rowspace}(M) \cong \big(\mathbb{F}S_4\big)^4$ as an $S_4$-module.
		\item
		$\mathrm{nullity}(M) = 24$ and $\mathrm{nullspace}(M) \cong \mathbb{F}S_4$ as an $S_4$-module.
	\end{itemize}
\end{lemma}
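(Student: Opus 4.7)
The plan is to exploit the regularity hypothesis to identify the arity-$4$ component $\calO_\mathbf{x}(4) = \calT(4)/\calJ(4)$ with the regular representation $\mathbb{F}S_4$, and then read off both the rank and the $S_4$-module structure of $M$ from the short exact sequence of $S_4$-modules
\[
0 \longrightarrow \calJ(4) \longrightarrow \calT(4) \longrightarrow \calO_\mathbf{x}(4) \longrightarrow 0.
\]

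First I would unpack Definition \ref{definitionrelationmatrix} to see that the rowspace and nullspace of $M$ both carry $S_4$-module structures. The rows of $M$ are the 120 vectors obtained by letting each $\tau \in S_4$ act on the five cubic consequences listed in \eqref{consequences-arity-4-expansion}; hence $\mathrm{Rowspace}(M) = \calJ(4)$ and is $S_4$-stable. Dually, using the monomial basis of Notation~\ref{120basis} to identify $\calT(4)^{*} \cong \mathbb{F}^{120}$, a column vector $H$ lies in $\mathrm{Nullspace}(M)$ precisely when the associated functional annihilates every row of $M$, i.e.\ annihilates $\calJ(4)$, which gives
\[
\mathrm{Nullspace}(M) \;\cong\; \mathrm{Ann}_{\calT(4)^{*}}\bigl(\calJ(4)\bigr) \;\cong\; \bigl(\calT(4)/\calJ(4)\bigr)^{*}
\]
as $S_4$-modules.

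Next I would invoke regularity. By hypothesis $\calO_\mathbf{x}(4) \cong \mathbb{F}S_4$, so $\mathrm{nullity}(M) = 24$ and $\mathrm{rank}(M) = 120 - 24 = 96$. Since the regular representation is self-dual as an $S_4$-module, the preceding isomorphism yields $\mathrm{Nullspace}(M) \cong \mathbb{F}S_4$. For the rowspace, Definition \ref{symmetricdefinition} gives $\calT(4) \cong \Omega(4)\otimes \mathbb{F}S_4 \cong (\mathbb{F}S_4)^{\oplus 5}$ (because $S_4$ acts trivially on $\Omega(4)$), and because $\mathrm{char}(\mathbb{F}) = 0$, Maschke's theorem splits the exact sequence above to give $(\mathbb{F}S_4)^{\oplus 5} \cong \calJ(4) \oplus \mathbb{F}S_4$; the Krull--Schmidt theorem (or equivalently, a direct count of isotypic multiplicities) then forces $\calJ(4) \cong (\mathbb{F}S_4)^{\oplus 4}$, as claimed.

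There is no serious obstacle here beyond the bookkeeping: one has to verify that the row- and column-indexings of $M$ are $S_4$-equivariant so that rowspace and nullspace genuinely inherit $S_4$-module structures, after which Maschke's theorem together with the self-duality of the regular representation yield the isomorphism types for free.
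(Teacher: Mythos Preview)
Your proof is correct and follows essentially the same approach as the paper's: identify $\mathrm{rowspace}(M)$ with $\calJ(4)$, use regularity to get $\calT(4)/\calJ(4)\cong\mathbb{F}S_4$, and combine this with $\calT(4)\cong(\mathbb{F}S_4)^5$ to read off the ranks and module structures. You are in fact more careful than the paper on two points it leaves implicit: you justify $\mathrm{nullspace}(M)\cong\calT(4)/\calJ(4)$ via duality and self-duality of the regular module, and you invoke Maschke and Krull--Schmidt to pass from the short exact sequence to $\calJ(4)\cong(\mathbb{F}S_4)^4$.
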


\begin{proof}
	Regularity means that $\calT(n)\, / \calJ(n) \cong \mathbb{F} S_{n}$ for all $n \ge 1$,
	and so in particular we have $\calT(4)\, / \calJ(4) \cong \mathbb{F} S_4$.
	Since $\Omega(4)$ is five-dimensional, we have $\calT(4) \cong \big( \mathbb{F} S_4 \big)^5$, which implies
	$\calJ(4) \cong \big( \mathbb{F} S_4\big)^4$.
	Since $\mathrm{rowspace}(M) \cong \calJ(4)$, we have
	$\mathrm{nullspace}(M) \cong \calT(4)\, / \calJ(4)$.
\end{proof}

Consider any subset $A \subseteq \{1,\dots,6\}$ and let $M(A)$ be the matrix obtained by setting
$x_i = 0$ for all $i \in A$ in the cubic relation matrix $M$.
If we apply Algorithm \ref{partialsmithform} to $M(A)$ then we obtain a block diagonal matrix
$\mathrm{diag}( I_r, B_s )$ where $r = r(A)$, $s = s(A)$ and $r+s = 120$.
As before, $I_r$ is the identity matrix of size $r$, and $B_s$ is a square matrix of size $s$ in which no
entry is a nonzero scalar.
The following result can be obtained by
a straightforward \texttt{Maple} computation.

\begin{lemma}
	\label{sizeBlemma}
	The size $s$ of $B$ depends only on whether 5 or 6 are in $A$:
	\begin{itemize}
		\item
		If $x_5 = x_6 = 0$ then $B$ has size $24$.
		\item
		If $x_5 = 0$ but $x_6 \ne 0$ then $B$ has size $30$.
		\item
		If $x_5 \ne 0$ then $B$ has size $36$.
	\end{itemize}
\end{lemma}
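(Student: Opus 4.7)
The plan is to reduce the claim to a statement about the $36 \times 36$ block $B$ from Lemma~\ref{basicMinfo}, and then verify the three cases by continuing the partial-Smith reduction after specialization.

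First I would observe that Algorithm~\ref{partialsmithform} only uses nonzero scalar entries as pivots. In the cubic relation matrix $M$, the only scalar entries are the $1$'s coming from the leading terms of the cubic consequences \eqref{consequences-arity-4-expansion}; all other nonzero entries are $-x_i$ for $i\in\{1,\dots,6\}$, which are indeterminates. Consequently, setting any subset of $x_i$ equal to zero cannot create new scalar entries, only destroy some of the $-x_i$ entries. Thus the exact sequence of $84$ pivot selections and eliminations used to extract the initial identity block from $M$ in Lemma~\ref{basicMinfo} is also valid, verbatim, on $M(A)$. After performing these operations we obtain
\[
\operatorname{diag}\bigl( I_{84},\, B(A) \bigr),
\]
where $B(A)$ is the specialization of the $36 \times 36$ block $B$ under $x_i = 0$ for $i \in A$. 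The question reduces to counting how many further scalar pivots the algorithm extracts from $B(A)$.

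Second, I would verify by direct \texttt{Maple} computation, using the explicit entries of $B$ (whose structure is recorded in Remark~\ref{blockremark}), the following three claims:
\begin{itemize}
\item[(a)] If $5 \notin A$, then no entry of $B(A)$ is a nonzero scalar, for any choice of $A \cap \{1,2,3,4,6\}$; hence the algorithm terminates and $s = 36$.
\item[(b)] If $5 \in A$ but $6 \notin A$, then continued partial-Smith reduction of $B(A)$ produces exactly six new scalar pivots, independent of $A \cap \{1,2,3,4\}$, leaving a block of size $30$ containing no further scalars; hence $s = 30$.
\item[(c)] If $\{5,6\} \subseteq A$, the reduction produces exactly twelve new scalar pivots, again independent of $A \cap \{1,2,3,4\}$, leaving a block of size $24$; hence $s = 24$.
\end{itemize}
Once (a)--(c) are checked, the lemma is immediate.

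The main obstacle is the computational bookkeeping behind (a)--(c): $B$ has $1296$ polynomial entries in six variables (Remark~\ref{blockremark}), and at each specialization one must track which entries become nonzero constants, perform the corresponding row/column eliminations, and confirm that the resulting polynomial entries do not themselves degenerate into further scalars (which would invalidate the asserted invariance in $A \cap \{1,2,3,4\}$). This is routine for a computer algebra system but opaque by hand. A structural explanation for why precisely the pair $(x_5, x_6)$ plays this distinguished role--presumably reflecting the fact that the monomials $a_3(a_1a_2)$ and $a_3(a_2a_1)$ are the two terms on the right of \eqref{LRrelation} in which the outermost argument $a_3$ matches the position of $a_3$ on the left--would be desirable, but the proof at this stage does not need it; the lemma is used later only as a quantitative input, and a conceptual account would likely require the representation-theoretic splitting developed in Section~\ref{representationsection}.
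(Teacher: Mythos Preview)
The paper's own proof is a single sentence: ``obtained by a straightforward \texttt{Maple} computation,'' meaning one applies Algorithm~\ref{partialsmithform} directly to each of the $64$ matrices $M(A)$ and records the resulting block size. Your proposal ends in the same place but inserts a reduction step first, and that step has a gap.

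You argue that because the only nonzero scalars in $M$ are the literal $1$'s, the $84$ pivot operations from Lemma~\ref{basicMinfo} remain \emph{available} on $M(A)$ and yield $\mathrm{diag}(I_{84},B(A))$. That much is correct. But Algorithm~\ref{partialsmithform} is deterministic: at each step it scans for the least-index nonzero scalar in the \emph{current} matrix. Your observation about scalar entries applies only to $M$ itself, not to the intermediate matrices produced during elimination---those entries are polynomials of the form $s_{ij}-s_{ik}s_{kj}$, and setting some $x_\ell=0$ can turn such a polynomial into a nonzero constant, causing the algorithm to choose a different pivot than it does on $M$. So you have not shown that running the algorithm on $M(A)$ gives the same terminal block size as first extracting $I_{84}$ and then running it on $B(A)$; that would require either checking that no intermediate entry degenerates to a scalar during the $84$ steps, or proving that the terminal size $s$ is independent of pivot order. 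Neither is argued.

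Since the lemma is in any case a machine-verified statement and the paper is content with that, the cleanest fix is to drop the reduction and, like the paper, run the algorithm directly on each $M(A)$. Your remark about the structural role of $x_5,x_6$ is a nice observation but, as you say, not needed here.
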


We now consider the $16$ cases in which $x_5 = x_6 = 0$; we can deal with them all at once by allowing
$x_1, \dots, x_4$ to be free parameters. We shall be able to establish the following rather attractive result,
which shows how the four most familiar cases of parametrized one-relation operads may be obtained directly from elementary
observations using linear and commutative algebra. In fact, we shall provide two proofs of this result, since each of them
is somewhat instructive.

\begin{proposition}
	\label{x_5x_6zero}
	The only cases of Relation \eqref{LRrelation} with $x_5 = x_6 = 0$ which are regular  
	are those defining the trivial, associative, Leibniz and Zinbiel operads.
\end{proposition}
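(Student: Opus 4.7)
The plan is to reduce the regularity condition to an explicit polynomial system in $x_1,x_2,x_3,x_4$ and solve it. By Lemma \ref{sizeBlemma}, when $x_5=x_6=0$ the partial Smith form of the cubic relation matrix $M$ has the shape $\mathrm{diag}(I_{96},B)$ with $B$ a $24\times 24$ matrix whose entries are polynomials in $x_1,x_2,x_3,x_4$ (none of which is a nonzero scalar). Combined with Lemma \ref{basicMinfo} and the preceding lemma, regularity in arity $4$ forces $\mathrm{rank}(M)=96$, which in turn forces $\mathrm{rank}(B)=0$, i.e.\ every entry of $B$ must vanish. Thus the first step is to run Algorithm \ref{partialsmithform} on $M$ with $x_5=x_6=0$ substituted to obtain $B$ explicitly, and to read off the resulting ideal $I\subseteq\mathbb{F}[x_1,x_2,x_3,x_4]$ generated by its entries.

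Next, I would compute the reduced Gr\"obner basis (and, if needed, the radical) of $I$ in the \texttt{grevlex} order. In only four variables this is a routine \texttt{Maple}/\texttt{Singular} computation, and I expect the zero set $V(I)$ to consist precisely of the four points
\[
(x_1,x_2,x_3,x_4)\in\{(0,0,0,0),\ (1,0,0,0),\ (1,0,-1,0),\ (1,1,0,0)\},
\]
corresponding to the left-nilpotent, associative, Leibniz, and Zinbiel relations from Example \ref{fiveexamples} respectively. If instead the zero locus turns out to contain one-parameter families, such families should then be identified via the isomorphism recipe of Section \ref{actualclassification} with these four basic cases. Note that for each of these four operads it is classically known that the free algebra on $V$ has the same underlying graded vector space as $\overline{T}(V)$, so once the list of candidates matches, regularity is automatic and no separate verification is needed.

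As a more conceptual alternative, I would decompose the problem by the $S_4$-action anticipated in Section \ref{representationsection}. Since $M$ is $S_4$-equivariant, it splits into five blocks, one for each irreducible representation of $S_4$, and the regularity condition $\mathrm{nullspace}(M)\cong\mathbb{F}S_4$ translates into a specified rank in each isotypic block. This organises the 24 scalar equations cut out by $B=0$ into five much smaller systems and should make it transparent why the solution set is zero-dimensional.

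The main obstacle is twofold. First, the naive matrix computation is only feasible because $x_5=x_6=0$ cuts $B$ down to size $24$ (contrast with Remark \ref{hugenumbers}, where the full $36\times 36$ block would be intractable). Second, even in four variables one must be careful that the computed ideal is radical, or at least that its variety captures every parameter giving $B=0$; the representation-theoretic splitting is the safety net here, ensuring no components are missed and that spurious scheme-theoretic solutions, if any, are recognised and discarded.
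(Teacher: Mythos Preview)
Your proposal is correct and follows essentially the same approach as the paper's first proof: reduce $M$ with $x_5=x_6=0$ to $\mathrm{diag}(I_{96},B)$ via Algorithm~\ref{partialsmithform}, observe that regularity forces $B=0$, compute the Gr\"obner basis of the ideal generated by the entries of $B$ (and its radical), and read off the four points $(0,0,0,0)$, $(1,0,0,0)$, $(1,1,0,0)$, $(1,0,-1,0)$. The paper in fact carries this out explicitly, listing the $18$ distinct nonzero entries of $B$ and the resulting seven-element Gr\"obner bases; no one-parameter families appear, so your contingency plan is not needed.

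It is worth noting that the paper also supplies a second, more conceptual proof that you do not anticipate: when $x_5=x_6=0$ the rewriting rule \eqref{LRrelation} never moves the third argument into the leftmost position, so a simple induction shows that right-normed products always span. The only question is linear independence, and equating the two ways of rewriting $((a_1a_2)a_3)a_4$ yields an explicit list of polynomial obstructions whose Gr\"obner basis again singles out the same four points. This second argument explains \emph{why} the case $x_5=x_6=0$ is tractable without recourse to the partial Smith form, and gives an independent check of the computer algebra.
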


\begin{proof}[First proof of Proposition \ref{x_5x_6zero}]
\textit{} Algorithm \ref{partialsmithform} reduces $M$ to an identity matrix of size $96$ and a lower right block
$B$ of size $24$. Thus in order for the nullity of $M$ to be $24$ it is necessary and sufficient that $B = 0$, and this in turn is equivalent to $DI_1(B) = \{0\}$.
In $B$, $432/576=3/4$ of the entries are nonzero but there are only $18$ distinct nonzero entries,  with degrees $\{3,4\}$, coefficients $\{\pm 1,2\}$, and numbers of terms $\{2,6,7,8\}$. Figure \ref{X1234basis} lists these entries in \texttt{grevlex} order; those not factored are irreducible.

\begin{figure}[ht]
	$
		\begin{array}{rl}
		1&
		x_4 x_1 \big( x_2 + x_3 \big)
		\\[3pt]
		2&
		x_4^2 x_2 - x_3^2 x_2 - x_3^2 x_1 + x_4 x_2 x_1 - x_4 x_1^2 - x_3 x_1^2
		\\[3pt]
		3&
		\big( x_2 + x_3 \big)  \big( x_1^2 - x_1 x_2 + x_1 x_3 - x_2 x_4 + x_3 x_4 \big)
		\\[3pt]
		4&
		x_4^2 x_3 - x_3 x_2^2 + x_4 x_3 x_1 + x_2^2 x_1 + x_4 x_1^2 - x_2 x_1^2
		\\[3pt]
		5&
		x_4 x_3 x_2^2 + x_4^2 x_3 x_1 + x_3 x_2^2 x_1 + x_4 x_3 x_1^2 + x_4 x_3^2 + x_4^2 x_1 - x_3 x_2
		\\[3pt]
		6&
		x_4^2 x_2^2 + x_3 x_2^3 + x_4^3 x_1 + x_4 x_3 x_2 x_1 + x_3^3 + x_4 x_3 x_1 - x_3 x_1
		\\[3pt]
		7&
		x_4 x_3^2 x_2 + x_4^2 x_2 x_1 + x_3^2 x_2 x_1 + x_4 x_2 x_1^2 - x_4 x_2^2 - x_4^2 x_1 + x_3 x_2
		\\[3pt]
		8&
		x_4^2 x_3 x_2 + 2 x_4 x_3 x_2 x_1 + x_3 x_2 x_1^2 + x_4^2 x_1 + x_3 x_2 x_1 - x_3 x_2
		\\[3pt]
		9&
		x_4 \big( x_1 x_2^2 + x_1 x_2 x_3 + x_2^2 x_4 + x_2 x_3 x_4 - x_1 x_2 - x_1 x_3 + x_2 \big)
		\\[3pt]
		10&
		x_4^3 x_2 + x_4 x_3 x_2^2 + x_4^2 x_2 x_1 + x_3 x_2^2 x_1 + x_4 x_3 x_1 + x_3 x_1^2 - x_3 x_1
		\\[3pt]
		11&
		x_4 \big( x_2 + x_3 \big)  \big( 2 x_1 x_4 + x_2^2 + x_3^2 \big)
		\\[3pt]
		12&
		x_4^2 x_3^2 + x_3^3 x_2 + x_4^3 x_1 + x_4 x_3 x_2 x_1 - x_2^3 - x_4 x_2 x_1 + x_2 x_1
		\\[3pt]
		13&
		x_4 \big( x_1 x_2 x_3 + x_1 x_3^2 + x_2 x_3 x_4 + x_3^2 x_4 + x_1 x_2 + x_1 x_3 - x_3 \big)
		\\[3pt]
		14&
		x_4^2 x_3^2 + 2 x_4^2 x_3 x_2 + x_4^2 x_2^2 + x_3^2 x_2^2 + x_4 x_3^2 x_1 + x_4 x_2^2 x_1 + x_4^2 x_1^2 - x_4
		\\[3pt]
		15&
		x_4^3 x_3 + x_4 x_3^2 x_2 + x_4^2 x_3 x_1 + x_3^2 x_2 x_1 + x_4 x_2 x_1 + x_2 x_1^2 - x_2 x_1
		\\[3pt]
		16&
		x_4^3 x_3 + x_4^3 x_2 + x_4 x_3^2 x_2 + x_4 x_3 x_2^2 - x_3 x_2 x_1 - x_2^2 x_1 + x_2^2
		\\[3pt]
		17&
		x_4^3 x_3 + x_4^3 x_2 + x_4 x_3^2 x_2 + x_4 x_3 x_2^2 + x_3^2 x_1 + x_3 x_2 x_1 - x_3^2
		\\[3pt]
		18&
		x_4^4 + 2 x_4^2 x_3 x_2 + x_3^2 x_2^2 + x_3 x_2 x_1 + x_1^3 - x_1^2
		\end{array}
	$
	\caption{Nonzero entries of lower right block $B$ when $x_5 = x_6 = 0$}
	\label{X1234basis}
\end{figure}

The Gr\"obner basis for the ideal generated by these entries has seven elements:
\[
x_4, \;\;
x_2 \big( x_2{-}x_1 \big), \quad
x_3 x_2, \quad
x_3 \big( x_3{+}x_1 \big), \quad
x_1^2 \big( x_1{-}1 \big), \quad
x_2 x_1 \big( x_1{-}1 \big), \quad
x_3 x_1 \big( x_1{-}1 \big).
\]
The Gr\"obner basis for the radical also has seven elements:
\[
x_4, \quad
x_1 \big( x_1{-}1 \big), \quad
x_2 \big( x_1{-}1 \big), \quad
x_3 \big( x_1{-}1 \big), \quad
x_2 \big( x_2{-}1 \big), \quad
x_3 x_2, \quad
x_3 \big( x_3{+}1 \big)
\]
From these results it is easy to verify that $DI_1(B)$ is zero-dimensional and that its zero set $V(DI_1(B))$
consists of exactly four points $( x_1, x_2, x_3, x_4 ) = (0,0,0,0)$, $(1,0,0,0)$, $(1,1,0,0)$, $(1,0,-1,0)$.
We have seen these coefficients before, in Example \ref{fiveexamples}: they correspond to the left nilpotent,
associative, Zinbiel and Leibniz operads. 
\end{proof}

\begin{proof}[Second proof of Proposition \ref{x_5x_6zero}] 
While Relation \eqref{LRrelation}  allows reassociation
of parentheses to the right when we deal with products of three arguments, that does not in general help to reassociate parentheses in products of more than three arguments: since we allow all permutations of arguments on the right side, an infinite chain of reassociations might happen. However, if we assume that $x_5=x_6=0$, that cannot happen, as the following lemma shows.

\begin{lemma}
	Suppose that $x_5=x_6=0$. Then every operation in the corresponding operad is equal to a linear combination of right-normed products.
\end{lemma}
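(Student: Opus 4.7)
The plan is to prove this by a double induction, with the outer induction on the arity $n$ of the operation and the inner induction (with $n$ fixed) on the size of the left child of the root of the underlying tree. The arities $n=1$ and $n=2$ are trivial since the unique basis tree in each case is already right-normed, so the base cases of both inductions are settled immediately.

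For the inductive step at arity $n \ge 3$, I would consider a tree $T = L \cdot R$ of arity $n$. If $|L| = 1$, so that $L$ is a single leaf $a_i$, then by the outer induction hypothesis $R$ is a linear combination of right-normed products, and prepending $a_i$ keeps each such product right-normed, so $T$ has the desired form. If $|L| \ge 2$, I would write $L = L_1 \cdot L_2$, so that $T = (L_1 L_2)R$, and apply the defining relation \eqref{LRrelation} with $a_1, a_2, a_3$ replaced by the subtrees $L_1, L_2, R$. Since $x_5 = x_6 = 0$, the outcome is
\[
T = x_1\, L_1(L_2 R) + x_2\, L_1(R L_2) + x_3\, L_2(L_1 R) + x_4\, L_2(R L_1).
\]
Each summand has the same arity $n$, but now the root's left child is either $L_1$ or $L_2$, whose size is strictly less than $|L|$. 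The inner induction hypothesis then expresses each summand as a linear combination of right-normed products, which completes the inductive step.

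The key structural point, and really the only nontrivial aspect of the argument, is that the hypothesis $x_5 = x_6 = 0$ is exactly what prevents the outer argument $R$ from appearing as the root's left argument in any term produced by a single application of the rewriting rule. Without this hypothesis, the relation would also produce summands of the form $R(L_1 L_2)$ and $R(L_2 L_1)$, whose left children have size $|R|$, which need not be smaller than $|L|$; the size of the root's left child could then fluctuate without bound, and the inner induction would break down. I do not anticipate any other obstacle: once the correct inductive invariant (arity first, then left-child size) is identified, the argument reduces to one direct application of the relation per step, and the bookkeeping is essentially automatic.
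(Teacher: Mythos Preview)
Your proof is correct and follows essentially the same double induction as the paper's own argument: outer induction on the arity, inner induction on the number of arguments in the left factor, with the relation applied once at each inner step to strictly decrease that number. Your explicit remark about why the hypothesis $x_5=x_6=0$ is exactly what keeps the inner induction well-founded matches the paper's observation that the surviving four permutations are precisely those that do not move the third argument into the first position.
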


\begin{proof}
	Let us consider some balanced bracketing of $k\ge 2$ arguments. It is of the form $(AB)$, where $A$ and $B$ are balanced bracketings of fewer arguments, with $l$ arguments in $A$ and $k-l$ arguments in $B$. We shall prove the statement by induction on $k$, and for a fixed $k$, by induction on $l$. In both cases, the basis of induction is trivial: for $k=2$ there is nothing to prove, and for each $k$ and $l=1$ we may use the induction hypothesis and write $B$ as a linear combination of right-normed products; the right-normed property does not change when we multiply by~$A$.
	
	Assume that $l\ge 2$, so  that $A=(A_1A_2)$; we are in a situation where we can apply the defining relation of our operad, obtaining
	\[
	(A_1A_2)B=
	x_1 A_1(A_2B) +
	x_2 A_1(BA_2) +
	x_3 A_2(A_1B) +
	x_4 A_2(BA_1).
	\]
The first four permutations are exactly those which do not bring the third argument into the first position, so each of these terms has the parameter $l$ smaller than the original one, and the induction hypothesis applies.
\end{proof}

This lemma shows that under the assumption $x_5=x_6=0$ the spanning property of the right-normed products is trivially satisfied, so there is a surjective map from the regular representation of $S_n$ onto the $n$-th component of our operad. It remains to check that this map has no kernel. Let us start with arity $4$.  Note that the defining relation of our operad can be applied as a rewriting rule to the product $((a_1a_2)a_3)a_4$ in two different ways, by rewriting $(a_1a_2)a_3$ first, or by rewriting $(ba_3)a_4$ and setting $b=(a_1a_2)$, as in Example \ref{antiassociativeexample}. This leads to two \emph{a priori} different expressions for $((a_1a_2)a_3)a_4$ as linear combinations of right-normed products; we collect the nonzero coefficients of the difference of those in the the following table, where the polynomial in the row indexed $\tau\in S_4$ corresponds to the coefficient of $a_1(a_2(a_3a_4)) . \tau$ :
\[
\begin{array}{ll}
1234&x_2^2x_3^2 + 2x_2x_3x_4^2 + x_4^4 + x_1^3 + x_1x_2x_3  - x_1^2\\
1243&x_1x_2x_3^2 + x_1x_3x_4^2 + x_2x_3^2x_4 + x_3x_4^3 + x_1^2x_2 + x_1x_2x_4 - x_1x_2\\
1324&x_1^2x_2 + x_1^2x_3 - x_1x_2^2 + x_1x_3^2 - x_2^2x_4 + x_3^2x_4\\
1342&- x_1^2x_2 + x_1^2x_4 + x_1x_2^2 + x_1x_3x_4 - x_2^2x_3 + x_3x_4^2\\
1423&x_1x_2x_3x_4 + x_1x_4^3 + x_2x_3^3 + x_3^2x_4^2 - x_1x_2x_4 - x_2^3 + x_1x_2\\
1432&x_2^2x_3x_4 + x_2x_3^2x_4 + x_2x_4^3 + x_3x_4^3 - x_1x_2^2 - x_1x_2x_3 + x_2^2\\
2134&x_1x_2^2x_3 + x_1x_2x_4^2 + x_2^2x_3x_4 + x_2x_4^3 + x_1^2x_3 + x_1x_3x_4 - x_1x_3\\
2143&x_1^2x_2x_3 + 2x_1x_2x_3x_4 + x_2x_3x_4^2 + x_1x_2x_3 + x_1x_4^2 - x_2x_3\\
2314&x_1^2x_3 + x_1^2x_4 - x_1x_2x_4 + x_1x_3^2 + x_2x_3^2  - x_2x_4^2\\
2341&x_1x_2x_4 + x_1x_3x_4\\
2413&x_1^2x_2x_4 + x_1x_2x_3^2 + x_1x_2x_4^2 + x_2x_3^2x_4 - x_1x_4^2 - x_2^2x_4 + x_2x_3\\
2431&x_1x_2^2x_4 + x_1x_2x_3x_4 + x_2^2x_4^2 + x_2x_3x_4^2 - x_1x_2x_4 - x_1x_3x_4 + x_2x_4\\
3214&x_2^2x_3x_4 + x_2x_3^2x_4 + x_2x_4^3 + x_3x_4^3 + x_1x_2x_3 + x_1x_3^2 - x_3^2\\
3241&x_1x_2x_3x_4 + x_1x_3^2x_4 + x_2x_3x_4^2 + x_3^2x_4^2 + x_1x_2x_4 + x_1x_3x_4 - x_3x_4\\
3124&x_1x_2x_3x_4 + x_1x_4^3 + x_2^3x_3 + x_2^2x_4^2 + x_1x_3x_4 + x_3^3 - x_1x_3\\
3142&x_1^2x_3x_4 + x_1x_2^2x_3 + x_1x_3x_4^2 + x_2^2x_3x_4 + x_1x_4^2 + x_3^2x_4 - x_2x_3\\
3412&2x_1x_2x_4^2 + 2x_1x_3x_4^2 + x_2^2x_3x_4 + x_2x_3^2x_4 + x_2^3x_4 + x_3^3x_4\\
3421&x_1^2x_4^2 + x_1x_2^2x_4 + x_1x_3^2x_4 + 2x_2x_3x_4^2 + x_2^2x_3^2 + x_2^2x_4^2 + x_3^2x_4^2 - x_1x_4\\
\end{array}
\]

If the right-normed products are linearly independent, all those coefficients must be equal to zero.  The Gr\"obner basis for the corresponding system of polynomial equations is
\[
x_1^4 - x_1^3, \quad x_4^4 + x_1^3 - x_1^2, \quad  x_2^3 - x_2^2, \quad x_3^3 + x_3^2, \quad  x_1x_2 - x_2^2,\quad x_1x_3 + x_3^2,\quad  x_2x_3, \quad x_1x_4, \quad x_2x_4,\quad x_3x_4 .
\]
This implies that every solution to this system has 
 \[
x_4=0,  \quad x_1\in\{0,1\}, \quad  x_2\in\{0,1\}, \quad  x_3\in\{-1,0\},
 \] and $x_2x_3=x_3(x_1+x_3)=x_2(x_1-x_2)=0$, so the only solutions are $(0,0,0,0)$, which is the left-nilpotent operad, $(1,0,0,0)$, which is the associative operad, $(1,1,0,0)$, which is the Zinbiel operad, and $(1,0,-1,0)$, which is the Leibniz operad. 
\end{proof}

Proposition \ref{x_5x_6zero} shows that if we wish to find new regular solutions we will have
to consider the more difficult cases in which either $x_5$ or $x_6$ is nonzero. Examining the two proofs
of that proposition, we see that since, according to Lemma \ref{sizeBlemma},  the matrix $B$ has size either 30 or 36 in these cases 
we have to deal with either impractically large numbers of minors (more than $10^{18}$ in
the worst case of matrix of size 36),
or a rewriting rule that has no termination property.
We therefore need to introduce some more powerful techniques, and that is the topic of the next section.

\section{Representation theory of the symmetric groups}
\label{representationsection}

Because of the symmetric group actions on the components of any operad, it is to be expected that
representation theory of symmetric groups can be utilized in operad theory.
For an operad presented as a quotient of a free operad, the $n$-th component of the ideal
of relations is an $S_n$-submodule of the direct sum of a finite number of copies of the regular
$S_n$-module, $\mathbb{F} S_n$. In simplest terms, the motivation for using representation theory is to
``divide and conquer'':
to split one large intractable problem into a number of smaller tractable pieces which are collectively
equivalent to the original problem.
We refer the reader to \cite{BMP} for a systematic development of the necessary material using modern
notation and terminology.

There are two significant advantages to using the representation theory of the symmetric group to study
algebraic operads.
We have already mentioned the first: this method allows us to study a set of multilinear relations
``one representation at a time'', which greatly reduces the sizes of the matrices involved.
The second important reason is that using representation theory allows us to specify beforehand the
$S_n$-module structure
of the space of relations, not only its dimension, and this can save a great deal of further computation.

For example, the regular $S_4$-module $\mathbb{F} S_4$ has dimension 24, but there are other
$S_4$-modules of dimension 24.
Indeed, if $m_1,\dots,m_5 \ge 0$ are the multiplicities of the simple modules
$[4],[31],[2^2],[21^2],[1^4]$
in the $S_4$-module $T$,
\[
T \cong  m_1[4] \oplus m_2[31] \oplus m_3[2^2] \oplus m_4[21^2] \oplus m_5[1^4],
\]
then
\[
\dim(T) = 24 \quad \iff \quad m_1 + 3 m_2 + 2 m_3 + 3 m_4 + m_5 = 24.
\]
There are $1615$ solutions to this equation, and no two of the corresponding modules are isomorphic,
but only $\mathbb{F}S_4$ has multiplicities $[1,3,2,3,1]$.
If we consider only submodules of $( \mathbb{F} S_4 )^5$ then we still have $529$ solutions.
If we restrict further to modules $T$ which are symmetric in the sense that $T \otimes [1^4] \cong T$ where
$[1^4]$ is the sign module, or equivalently $m_1 = m_5$, $m_2 = m_4$, then the number of solutions decreases
to a more manageable 21.

Without representation theory, if we encounter a module of dimension 24, we have to determine its structure by
computing the traces of the representation matrices for a set of conjugacy class representatives and then using
the character table of $S_4$ to express the character as a linear combination (with non-negative
integer coefficients) of the simple characters.
With representation theory, this extra work is unnecessary.

\subsection{Structure theory}

When the characteristic of $\mathbb{F}$ is 0 or $p > n$, the group algebra $\mathbb{F} S_n$ is semisimple,
and the classical structure theory applies.
Let $\lambda$ range over the partitions of $n$; we write $p(n)$ for the number of partitions.
The regular module $\mathbb{F} S_n$ decomposes into the (orthogonal) direct sum of simple two-sided
ideals $M(\lambda)$, each of which is isomorphic to a full matrix algebra $M_{d(\lambda)}(\mathbb{F})$,
where $d_\lambda$ is the dimension of the simple $S_n$-module $[\lambda]$:
\begin{equation}
\label{sndecomp}
\mathbb{F} S_n \cong \bigoplus_{\lambda} M(\lambda),
\qquad \qquad
M(\lambda) \cong M_{d(\lambda)}(\mathbb{F}).
\end{equation}
As a right (or left) ideal $M(\lambda)$ decomposes as the direct sum of $d(\lambda)$ copies of $[\lambda]$
which correspond to the rows (or columns) of $M_{d(\lambda)}(\mathbb{F})$.
Efficient algorithms are known for computing the isomorphism \eqref{sndecomp} in both directions;
see \cite{BMP}.
We will only require the projections which take a partition $\lambda$ and a permutation $\sigma$ and
produce the matrix $R_\lambda(\sigma)$ in $M_{d(\lambda)}(\mathbb{F})$ which represents
the action of $\sigma$ on $[\lambda]$.
The simplest algorithm for computing the matrices $R_\lambda(\sigma)$ was discovered by Clifton \cite{C}.

The isomorphism \eqref{sndecomp} expresses $\mathbb{F} S_n$, a single vector space of dimension $n!$,
as the direct sum of $p(n)$ subspaces of dimensions $d(\lambda)^2$, and these subspaces are orthogonal
in the sense that $xy = 0$ if $x \in M(\lambda)$ and $y \in M(\lambda')$ with $\lambda \ne \lambda'$.
Thus we have divided the original structure of size $n!$ into a list of $p(n)$ independent structures of
average size $n!/p(n)$.
But we have also converted the vector space $\mathbb{F} S_n$ (a tensor of rank 1) into a list of $p(n)$
full matrix algebras (tensors of rank 2).
Thus the original problem has decomposed into $p(n)$ problems of size $\sqrt{n!/p(n)}$, which is
the average dimension
of a simple $S_n$-module.

\subsection{Representation matrices for polynomial identities of arity 4}

We now restrict to the case $n = 4$ that we need to continue our analysis of the cubic relation matrix $M$.
For each partition $\lambda$ of 4 the dimension $d_\lambda$ of the simple module $[\lambda]$ is the
number of standard tableaux; see Figure \ref{tableaux}.

\begin{figure}[ht]
	\[
	\begin{array}{|c|c|c|c|c|}
	\midrule
	\vcenter{\hbox{  \young(1234)}}
	&
	\vcenter{\hbox{\young(123,4) \; \young(124,3) \; \young(134,2)}}
	&
	\vcenter{\hbox{\young(12,34) \; \young(13,24) }}
	&
	\vcenter{\hbox{\young(12,3,4) \; \young(13,2,4) \; \young(14,2,3)}}
	&
	\vcenter{\hbox{\young(1,2,3,4)}}
	\\
	d_\lambda = 1 & d_\lambda = 3 & d_\lambda = 2 & d_\lambda = 3 & d_\lambda = 1
	\\
	\lambda = 4 & \lambda = 31 & \lambda = 2^2 & \lambda = 21^2 & \lambda = 1^4
	\\
	\midrule
	\end{array}
	\]
	\caption{Partitions, dimensions, standard tableaux ($n = 4$)}
	\label{tableaux}
\end{figure}

The corresponding isomorphism \eqref{sndecomp} has the form
\begin{equation}
\label{s4decomp}
\mathbb{F} S_4 \;\cong\;
\mathbb{F} \;\oplus\;
M_3(\mathbb{F}) \;\oplus\;
M_2(\mathbb{F}) \;\oplus\;
M_3(\mathbb{F}) \;\oplus\;
\mathbb{F} ,
\end{equation}
which can be viewed as a map from permutations $\sigma$ to quintuples of matrices $R_\lambda(\sigma)$.
The representation matrices for the generators $\sigma = (12), (23), (34) \in S_4$ are as follows:
\begin{align*}
(12) &\longmapsto
\left[ \quad
\left[ \begin{array}{r} 1 \end{array} \right], \quad
 \begin{bmatrix} 1 & 0 & -1 \\ 0 & 1 & -1 \\ 0 & 0 & -1  \end{bmatrix}, \quad
 \begin{bmatrix}1 & -1 \\ 0 & -1  \end{bmatrix}, \quad
 \begin{bmatrix} 1 & -1 & 1 \\ 0 & -1 & 0 \\ 0 & 0 & -1  \end{bmatrix}, \quad
\left[ \begin{array}{r} -1 \end{array} \right]
\quad \right]
\\
(23) &\longmapsto
\left[ \quad
\left[ \begin{array}{r} 1 \end{array} \right], \quad
\begin{bmatrix} 1 & 0 & 0 \\ 0 & 0 & 1 \\ 0 & 1 & 0 \end{bmatrix}, \quad
\begin{bmatrix}0 & 1 \\ 1 & 0 \end{bmatrix}, \quad
\begin{bmatrix} 0 & 1 & 0 \\ 1 & 0 & 0 \\ 0 & 0 & -1 \end{bmatrix}, \quad
\left[ \begin{array}{r} -1 \end{array} \right]
\quad \right]
\\
(34) &\longmapsto
\left[ \quad
\left[ \begin{array}{r} 1 \end{array} \right], \quad
\begin{bmatrix} 0 & 1 & 0 \\ 1 & 0 & 0 \\ 0 & 0 & 1 \end{bmatrix}, \quad
\begin{bmatrix}1 & -1 \\ 0 & -1 \end{bmatrix}, \quad
\begin{bmatrix}-1 & 0 & 0 \\ 0 & 0 & 1 \\ 0 & 1 & 0 \end{bmatrix}, \quad
\left[ \begin{array}{r} -1 \end{array} \right]
\quad \right]
\end{align*}

Recall from Notation \ref{120basis} that the $S_4$-module $\calT(4)$ is isomorphic to the direct sum
of five copies of $\mathbb{F} S_4$ generated by the five basis monomials $\gamma_1,\dots,\gamma_5$ of $\Omega(4)$.
Thus every multilinear polynomial identity $I$ of arity 4 can be decomposed into a sum of five components,
$I = I_1 + \cdots + I_5$, where each $I_i$ can be identified with an element of $\mathbb{F}S_4$ and each
monomial in $I_i$ has the same bracketing as $\gamma_i$.
We combine this decomposition of $\calT(4)$ with the decomposition \eqref{s4decomp} and rearrange the
components to obtain the isotypic decomposition of $\calT(4)$:
\[
\calT(4)
\cong
\bigoplus_{j=1}^5
\Big(
\mathbb{F} \oplus M_3(\mathbb{F}) \oplus M_2(\mathbb{F}) \oplus M_3(\mathbb{F}) \oplus \mathbb{F}
\Big)
\cong
\mathbb{F}^5 \oplus
M_3(\mathbb{F})^5 \oplus
M_2(\mathbb{F})^5 \oplus
M_3(\mathbb{F})^5 \oplus
\mathbb{F}^5 .
\]
To obtain the analogous decomposition of the multilinear identity $I = I_1 + \cdots + I_5$, we compute the
representation matrices $R_\lambda(I_j)$ for $\lambda = 4,\dots,1^4$ and $j=1,\dots,5$.
The isotypic decomposition of $I$ is a sequence of five matrices indexed by $\lambda$ of sizes
$d_\lambda \times 5 d_\lambda$:
\[
I
\longmapsto
\left\{ \;\;
\begin{array}{ll}
\Big[ \,
R_4(I_1) \,\big|\, R_4(I_2) \,\big|\, R_4(I_3) \,\big|\, R_4(I_4) \,\big|\, R_4(I_5)
\, \Big]
&\qquad 1 \times 5
\\[6pt]
\Big[ \,
R_{31}(I_1) \,\big|\, R_{31}(I_2) \,\big|\, R_{31}(I_3) \,\big|\, R_{31}(I_4) \,\big|\, R_{31}(I_5)
\, \Big]
&\qquad 3 \times 15
\\[6pt]
\Big[ \,
R_{2^2}(I_1) \,\big|\, R_{2^2}(I_2) \,\big|\, R_{2^2}(I_3) \,\big|\, R_{2^2}(I_4) \,\big|\, R_{2^2}(I_5)
\, \Big]
&\qquad 2 \times 10
\\[6pt]
\Big[ \,
R_{21^2}(I_1) \,\big|\, R_{21^2}(I_2) \,\big|\, R_{21^2}(I_3) \,\big|\, R_{21^2}(I_4) \,\big|\, R_{21^2}(I_5)
\, \Big]
&\qquad 3 \times 15
\\[6pt]
\Big[ \,
R_{1^4}(I_1) \,\big|\, R_{1^4}(I_2) \,\big|\, R_{1^4}(I_3) \,\big|\, R_{1^4}(I_4) \,\big|\, R_{1^4}(I_5)
\, \Big]
&\qquad 1 \times 5
\end{array}
\right.
\]
If $\calG = \{ \, I^{(1)}, \dots, I^{(r)} \, \}$ is a set of multilinear identities of arity 4 then
for each $\lambda$ and each $i = 1,\dots,r$ we compute the $d_\lambda \times 5 d_\lambda$ matrix as above
and stack them together to obtain a matrix of size $r d(\lambda) \times 5 d(\lambda)$:
\begin{equation}
\label{Gmatrix}
R_\lambda(\calG)
=
\left[
\begin{array}{c|c|c|c|c}
R_\lambda(I^{(1)}_1) & R_\lambda(I^{(1)}_2) & R_\lambda(I^{(1)}_3) & R_\lambda(I^{(1)}_4) & R_\lambda(I^{(1)}_5)
\\
\vdots & \vdots & \vdots & \vdots & \vdots
\\
R_\lambda(I^{(i)}_1) & R_\lambda(I^{(i)}_2) & R_\lambda(I^{(i)}_3) & R_\lambda(I^{(i)}_4) & R_\lambda(I^{(i)}_5)
\\
\vdots & \vdots & \vdots & \vdots & \vdots
\\
R_\lambda(I^{(r)}_1) & R_\lambda(I^{(r)}_2) & R_\lambda(I^{(r)}_3) & R_\lambda(I^{(r)}_4) & R_\lambda(I^{(r)}_5)
\end{array} \right]
\end{equation}
The row space of this matrix is the isotypic component for partition $\lambda$ of the submodule of $\calT(4)$
generated by $\calG$, and the rank of this matrix is the multiplicity of the simple $S_4$-module $[\lambda]$
in that isotypic component.

\subsection{Regularity in terms of representation theory}

Recall that Relation \eqref{LRrelation} has five consequences \eqref{generalconsequences} in arity 4 which generate the
$S_4$-module $\calJ(4) \subseteq \calT(4)$ of relations in arity 4 for parametrized one-relation
algebras. We rewrite the expansions \eqref{consequences-arity-4-expansion} of those five consequences by collecting terms corresponding to the same underlying bracketings:
\begin{align*}
\rho(a_1a_2,a_3,a_4)&=\Big[1234\Big]_1 + \Big[- x_1 1234 - x_2 1243\Big]_3 + \Big[- x_3 3124 - x_5 4123\Big]_4 %\\ &\quad
+ \Big[ - x_4 3412  - x_6 4312\Big]_5
\\
{}
\rho(a_1,a_2a_3,a_4)&= \Big[1234\Big]_2 +\Big[- x_3 2314 - x_4 2341\Big]_3 + \Big[- x_1 1234 - x_6 4231\Big]_4 % \\ &\quad
+ \Big[- x_2 1423  - x_5 4123\Big]_5
\\
{}
\rho(a_1,a_2,a_3a_4)&=\Big[1234  - x_5 3412 - x_6 3421\Big]_3  + \Big[-x_2 1342 - x_4 2341\Big]_4 % \\ &\quad
+ \Big[-x_1 1234 - x_3 2134\Big]_5
\\
{}
\rho(a_1,a_2,a_3)a_4&=\Big[1234\Big]_1 + \Big[- x_1 1234 - x_2 1324 - x_3 2134 - x_4 2314 %\\ &\quad
- x_5 3124 - x_6 3214\Big]_2
\\
{}
a_1\rho(a_2,a_3,a_4)&=\Big[1234\Big]_4 + \Big[- x_1 1234 - x_2 1243 - x_3 1324 - x_4 1342 %\\ &\quad
- x_5 1423 - x_6 1432\Big]_5
\end{align*}
Each pair of square brackets in each of these expressions contains an element of the group algebra
$\mathbb{F} S_4$ with coefficients extended to the polynomial ring $\mathbb{F}[x_1,\dots,x_6]$.

We now apply equation \eqref{Gmatrix} to compute the representation matrices $R_\lambda(\calG)$ of
the relations $\calG$ in each partition $\lambda$.
In this way we replace the original $120 \times 120$ cubic relation matrix $M$ by five smaller matrices
of sizes $5 d_\lambda \times 5 d_\lambda$ for $d_\lambda = 1, 3, 2, 3, 1$.
Regularity holds if and only if the nullity of $R_\lambda(\calG)$ equals $d_\lambda$ for all $\lambda$.
This guarantees that $\calT(4)$ contains exactly $d_\lambda$ copies of the simple module $[\lambda]$,
and is therefore isomorphic to the regular module $\mathbb{F} S_4$.
Equivalently, the rank of $R_\lambda(\calG)$ must equal $4 d_\lambda$ for all $\lambda$; in terms of
determinantal ideals, this means that for $d = d_\lambda$ we have $DI_{4d}(R_\lambda(\calG)) \ne \{0\}$
and $DI_{4d+1}(R_\lambda(\calG)) = \{0\}$ for all $\lambda$.
This proves the following result.

\begin{lemma} \label{propertyRrankconditions}
	The parametrized one-relation operad is regular in arity $4$ for particular values of the parameters $x_1,\dots,x_6$
	if and only if all of the following conditions hold:
	\[
	\begin{array}{lll}
	\lambda = 4\colon \qquad  &
	DI_4(R_\lambda(\calG)) \ne \{0\},  &
	DI_5(R_\lambda(\calG)) = \{0\}
	\\
	\lambda = 31\colon  &
	DI_{12}(R_\lambda(\calG)) \ne \{0\},  &
	DI_{13}(R_\lambda(\calG)) = \{0\}
	\\
	\lambda = 2^2\colon  &
	DI_8(R_\lambda(\calG)) \ne \{0\},  &
	DI_9(R_\lambda(\calG)) = \{0\}
	\\
	\lambda = 21^2\colon  &
	DI_{12}(R_\lambda(\calG)) \ne \{0\},  &
	DI_{13}(R_\lambda(\calG)) = \{0\}
	\\
	\lambda = 1^4\colon  &
	DI_4(R_\lambda(\calG)) \ne \{0\},  &
	DI_5(R_\lambda(\calG)) = \{0\}
	\end{array}
	\]
\end{lemma}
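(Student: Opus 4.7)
The plan is to translate regularity in arity $4$ directly into a rank condition on each of the five matrices $R_\lambda(\calG)$, and then recast rank conditions as determinantal ideal conditions. Since everything essential has already been assembled in this section, the proof is really a matter of packaging.

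First I would use the isotypic decomposition of $\calT(4)$. From Notation \ref{120basis} we have $\calT(4)\cong(\mathbb{F}S_4)^{5}$, and together with \eqref{s4decomp} this gives
\[
\calT(4)\cong\bigoplus_{\lambda\vdash 4}[\lambda]^{\oplus 5 d_\lambda},\qquad d_\lambda\in\{1,3,2,3,1\}.
\]
Regularity in arity $4$ asserts $\calT(4)/\calJ(4)\cong\mathbb{F}S_4$, and by semisimplicity of $\mathbb{F}S_4$ in characteristic zero this is equivalent to the complementary statement on the submodule of relations,
\[
\calJ(4)\cong\bigoplus_{\lambda\vdash 4}[\lambda]^{\oplus 4 d_\lambda}.
\]
Hence regularity holds if and only if the multiplicity of the simple module $[\lambda]$ in $\calJ(4)$ equals $4d_\lambda$ for every partition $\lambda$ of $4$.

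Next I would connect this multiplicity to the rank of $R_\lambda(\calG)$. The set $\calG$ consisting of the five consequences \eqref{consequences-arity-4-expansion} generates $\calJ(4)$ as an $S_4$-module, and as noted after \eqref{Gmatrix} the row space of $R_\lambda(\calG)$ is precisely the $\lambda$-isotypic component of $\calJ(4)$, with $\mathrm{rank}(R_\lambda(\calG))$ equal to the multiplicity of $[\lambda]$ in that component. Therefore regularity in arity $4$ is equivalent to the five equalities $\mathrm{rank}(R_\lambda(\calG))=4d_\lambda$ for $\lambda=4,31,2^2,21^2,1^4$, namely rank $4$, $12$, $8$, $12$, $4$ respectively.

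Finally I would translate these rank equalities into the determinantal ideal statements of the lemma. For a matrix $A$ with entries in $\mathbb{F}$, the rank equals $r$ if and only if some $r\times r$ minor of $A$ is nonzero while every $(r+1)\times(r+1)$ minor vanishes; in the notation of Definition \ref{definitiondetideal} this reads $DI_r(A)\ne\{0\}$ and $DI_{r+1}(A)=\{0\}$. Substituting the five values of $r=4d_\lambda$ yields exactly the list of conditions in the statement. There is no substantive obstacle: the decomposition of $\calT(4)$ into isotypic components, the identification of the row space of $R_\lambda(\calG)$ with the $\lambda$-isotypic component of $\calJ(4)$, and the standard rank-via-minors criterion are all in place. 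The point of the lemma is the efficient repackaging it provides, replacing the $120\times 120$ matrix $M$ by five matrices of sizes $5\times 5$, $15\times 15$, $10\times 10$, $15\times 15$, and $5\times 5$, which is what makes the subsequent commutative-algebra analysis tractable.
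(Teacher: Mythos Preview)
Your proof is correct and follows essentially the same approach as the paper: the paragraph immediately preceding the lemma states that regularity amounts to $\mathrm{nullity}(R_\lambda(\calG))=d_\lambda$ for each $\lambda$, equivalently $\mathrm{rank}(R_\lambda(\calG))=4d_\lambda$, and then translates this rank condition into the determinantal ideal conditions exactly as you do. Your write-up is in fact a bit more explicit about invoking semisimplicity to pass from the quotient condition to the submodule condition, but the underlying argument is identical.
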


\begin{remark}
The conditions in Lemma \ref{propertyRrankconditions} may be combined and simplified.
Let $G_1,\dots,G_k$ be Gr\"obner bases for ideals $I_1,\dots,I_k \subseteq \mathbb{F}[x_1,\dots,x_d]$.
	Consider these two equations:
	\[
	V( \, I_1 + \cdots + I_k \, ) = V( I_1 ) \,\cap\, \cdots \,\cap\, V( I_k ),
	\qquad \qquad
	V\big(\sqrt{I}\big) = V(I) = \bigcap_{i=1}^k V(I_i).
	\]
	From the generating set $G = G_1 \cup \cdots \cup G_k$ for the ideal $I = I_1 + \cdots + I_k$ we compute a
	Gr\"obner basis $H$, and from this we compute a Gr\"obner basis $K$ for the radical $\sqrt{I}$.
	We solve the system of equations $\{ \, f = 0 \mid f \in K \, \}$ to find $V\big(\sqrt{I}\big)$.
	To include the lower rank conditions $DI_{4d}(R_\lambda(\calG)) \ne \{0\}$, we substitute each solution
	into the Gr\"obner bases for the lower ideals $DI_{4d}(R_\lambda(\calG))$, and retain a solution
	if and only if it is not in $Z(DI_{4d}(R_\lambda(\calG)))$ for any $\lambda$.
\end{remark}

%%%%%%%%%%%%%%%%%%%%%%%%%%%%%%%%%%%%%%%%%%%%%%%%%%%%%%%%%%%%%%%%%%%%%%%%%%%%%%%%%%%%%%%%%%%%%%%%%%%%%%%%%%%% 108

We noted in Remark \ref{hugenumbers} that if the number of minors is too large then it is not practical
to compute a Gr\"obner basis for a determinantal ideal.
Using representation theory allows us to go much further.
To apply Lemma \ref{propertyRrankconditions}, we need to compute
\begin{itemize}
	\item
	all minors of sizes 4 and 5 for the $5 \times 5$ matrix $R_\lambda(\calG)$ when $\lambda = 4$ and
	$\lambda = 1^4$
	\item
	all minors of sizes 12 and 13 for the $15 \times 15$ matrix $R_\lambda(\calG)$ when $\lambda = 31$
	and $\lambda = 21^2$
	\item
	all minors of sizes 8 and 9 for the $10 \times 10$ matrix $R_\lambda(\calG)$ when $\lambda = 2^2$
\end{itemize}
The total is extremely small compared to the numbers in Remark \ref{hugenumbers}:
\[
2 \left[ \binom{5}{4}^2 + \binom{5}{5}^2 \right] +
2 \left[ \binom{15}{12}^2 + \binom{15}{13}^2 \right] +
\binom{10}{8}^2 + \binom{10}{9}^2 = 438277.
\]
Notably, those matrices have many zero entries; furthermore, they have entries which are nonzero scalars ($\pm 1$), so we can apply Algorithm \ref{partialsmithform} to reduce their sizes even further, as we did in Section \ref{cubicrelationmatrixsection} when we extracted the $36 \times 36$ block $B$
from the cubic relation matrix $M$.

\subsection{Reduction of the representation matrices}

The representation matrices $R_\lambda(\calG)$ are square of sizes 5, 15, 10, 15, 5 respectively. 
Applying Algorithm \ref{partialsmithform} reduces each of these to a block diagonal matrix $[I,B]$ of where $I$
(identity matrix) and $B$ (block with no nonzero scalars) have sizes $r$ and $s$ respectively, where
$[r,s]$ is one of the pairs $[3,2]$, $[10,5]$, $[6,4]$, $[10,5]$, and $[3,2]$.
We write $B(\lambda)$ for the block corresponding to partition $\lambda$.
If $B(\lambda)$ is $s \times s$ then $DI_s(B(\lambda))$ is the principal ideal generated by
$\det(B(\lambda))$, and so $DI_s(B(\lambda)) = \{0\}$ if and only if $\det(B(\lambda)) = 0$.
The next result is Lemma \ref{propertyRrankconditions} reformulated in terms of the reduced matrices $B(\lambda)$.

\begin{lemma} \label{reducedpropertyRconditions}
	Regularity holds for particular values of the parameters $x_1,\dots,x_6$ if and only if the following
	conditions on the determinantal ideals of $B(\lambda)$ hold for all $\lambda$:
	\[
	\begin{array}{lccl}
	\lambda &\qquad B(\lambda) &\qquad DI_r(B(\lambda)) \ne \{0\} &\qquad DI_{r+1}(B(\lambda)) = \{0\}
	\\ \midrule
	4  &\qquad  2 \times 2  &\qquad  r = 1  &\qquad  r+1 = 2, \quad \det(B(\lambda)) = 0
	\\
	31  &\qquad  5 \times 5  &\qquad  r = 2  &\qquad  r+1 = 3
	\\
	2^2  &\qquad  4 \times 4  &\qquad  r = 2  &\qquad  r+1 = 3
	\\
	21^2  &\qquad  5 \times 5  &\qquad  r = 2  &\qquad  r+1 = 3
	\\
	1^4  &\qquad  2 \times 2  &\qquad  r = 1  &\qquad  r+1 = 2, \quad \det(B(\lambda)) = 0
	\end{array}
	\]
\end{lemma}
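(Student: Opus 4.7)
The plan is to show that the rank conditions on each representation matrix $R_\lambda(\calG)$ coming from Lemma~\ref{propertyRrankconditions} translate cleanly into rank conditions on the smaller residual block $B(\lambda)$. The key observation is that Algorithm~\ref{partialsmithform} produces matrices $U$ and $V$, invertible over $\calC = \mathbb{F}[x_1,\dots,x_6]$ (their determinants are nonzero scalars in $\mathbb{F}$), such that $U\, R_\lambda(\calG)\, V = \mathrm{diag}(I_r, B(\lambda))$. A standard fact from commutative algebra is that the determinantal ideals of a matrix over $\calC$ are invariant under multiplication by matrices that are invertible over $\calC$; hence $DI_k(R_\lambda(\calG)) = DI_k(\mathrm{diag}(I_r, B(\lambda)))$ for every $k$.

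Next, for a block diagonal matrix whose upper-left block is an identity of size $r$, any $(r{+}k)\times(r{+}k)$ minor either vanishes or factors as a principal minor of $I_r$ (equal to $1$ when it is selected in full) times a $k\times k$ minor of $B(\lambda)$. It follows at once that
\[
DI_{r+k}(\mathrm{diag}(I_r, B(\lambda))) = DI_k(B(\lambda)) \qquad (k \ge 0),
\]
with the convention $DI_0(B(\lambda)) = \calC$. Combining this with the previous step, at each specialization of the parameters we have $\mathrm{rank}(R_\lambda(\calG)) = r + \mathrm{rank}(B(\lambda))$. The regularity condition of Lemma~\ref{propertyRrankconditions} asks that $\mathrm{rank}(R_\lambda(\calG)) = 4 d_\lambda$, equivalently $\mathrm{rank}(B(\lambda)) = 4 d_\lambda - r$, which in determinantal-ideal language is exactly $DI_{4d_\lambda - r}(B(\lambda)) \ne \{0\}$ together with $DI_{4d_\lambda - r + 1}(B(\lambda)) = \{0\}$.

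A short \texttt{Maple} computation (of the same flavour as Lemma~\ref{basicMinfo} but applied to the five representation matrices rather than to $M$) records the sizes of the identity and residual blocks: for $\lambda \in \{4, 1^4\}$ one obtains $(r,s)=(3,2)$; for $\lambda \in \{31, 21^2\}$ one obtains $(r,s)=(10,5)$; and for $\lambda = 2^2$ one obtains $(r,s)=(6,4)$. Plugging these pairs into $4 d_\lambda - r$ yields the target ranks $1$, $2$, $2$, $2$, $1$ respectively, which matches the table in the statement. For the $2\times 2$ blocks at $\lambda = 4$ and $\lambda = 1^4$, the condition $DI_2(B(\lambda)) = \{0\}$ reduces to the single equation $\det(B(\lambda)) = 0$, since $DI_2$ of a $2\times 2$ matrix is the principal ideal generated by its determinant.

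The only nontrivial ingredient is therefore the output of Algorithm~\ref{partialsmithform}, i.e.\ verifying the sizes of the residual blocks $B(\lambda)$; once these are in hand, the rest of the proof is the bookkeeping just described. The sizes are what they are because the matrices $R_\lambda(\calG)$ contain many entries equal to $1$ arising from the first term of the consequences $\rho(a_1a_2,a_3,a_4)$, $\rho(a_1,a_2a_3,a_4)$, $\rho(a_1,a_2,a_3a_4)$, $\rho(a_1,a_2,a_3)a_4$, and $a_1\rho(a_2,a_3,a_4)$, and this plentiful supply of scalar pivots lets the algorithm clear out a large identity block in each case.
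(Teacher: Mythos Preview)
Your proof is correct and follows essentially the same line as the paper, which simply asserts that the lemma is Lemma~\ref{propertyRrankconditions} ``reformulated in terms of the reduced matrices $B(\lambda)$'' after recording the block sizes $(r,s)$ produced by Algorithm~\ref{partialsmithform}; you have filled in the details the paper leaves implicit. One small imprecision: a nonvanishing $(r{+}k)\times(r{+}k)$ minor of $\mathrm{diag}(I_r,B(\lambda))$ factors as a $j\times j$ minor of $B(\lambda)$ for some $j\ge k$, not necessarily $j=k$; however, since $DI_k(B(\lambda))\supseteq DI_{k+1}(B(\lambda))\supseteq\cdots$ by Laplace expansion, the ideal generated is still $DI_k(B(\lambda))$, so your conclusion stands.
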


%%%%%%%%%%%%%%%%%%%%%%%%%%%%%%%%%%%%%%%%%%%%%%%%%%%%%%%%%%%%%%%%%%%%%%%%%%%%%%%%%%%%%%%%%%%%%%%%%%%%%%%%%%%% 108
%%%%%%%%%%%%%%%%%%%%%%%%%%%%%%%%%%%%%%%%%%%%%%%%%%%%%%%%%%%%%%%%%%%%%%%%%%%%%%%%%%%%%%%%%%%%%%%%%%%%%%%%%%%% 108

\section{Main technical result}
\label{proofclassificationsection}

In this section we describe the computations which allow us to complete the classification of parametrized
one-relation operads for which the arity $4$ component is the regular $S_4$-module. These computations are based on the reduced representation matrices $B(\lambda)$  collated in the online addendum to this paper \cite{BDadd}. Essentially the same methods can be used to determine all instances of Relation \eqref{LRrelation} which produce any desired $S_4$-module structure in arity $4$, not necessarily the regular one.

We increase the complexity of the problem step by step, starting with the case of one nonzero parameter,
and ending with the general case in which all six parameters all allowed to be nonzero.
In order to avoid linguistic pedantry, when we say that the parameters in some subset
$S \subseteq P = \{x_1,x_2,x_3,x_4,x_5,x_6\}$ are nonzero, we mean that we are setting the parameters
in $P\setminus S$ to zero and regarding those in $S$ as free.

We call the ideals $DI_{4d_\lambda+1}(R_\lambda(\calG))$ \emph{upper determinantal ideals}, and the ideals $DI_{4d_\lambda}(R_\lambda(\calG))$ \emph{lower determinantal ideals}; according to Lemma \ref{propertyRrankconditions}, for a parametrized one-relation operad to be regular in arity $4$, the set of parameters must be a common zero of all upper determinantal ideals, and must be outside the zero set of each lower determinantal ideal.  
We denote by the symbols $\Sigma+$ and  $\sqrt{\Sigma +}$ the sum of the upper determinantal ideals and its radical
respectively.

%%%%%%%%%%%%%%%%%%%%%%%%%%%%%%%%%%%%%%%%%%%%%%%%%%%%%%%%%%%%%%%%%%%%%%%%%%%%%%%%%%%%%%%%%%%%%%%%%%%%%%%%%%%% 108

\subsection{One nonzero parameter}

When the only nonzero parameter is $x_1$, for every representation $[\lambda]$ the upper ideal is generated by
$x_1^2(x_1-1)$ and the lower ideal is generated by 1.
Then clearly the sum of the upper ideals is generated by $x_1^2(x_1-1)$ and its radical is generated by $x_1(x_1-1)$.
For regularity, the sum of the upper ideals must be $\{0\}$, giving $x_1 = 0$ or $x_1 = 1$,
and each lower ideal must be nonzero (which is clear).
The solution $x_1 = 0$ corresponds to the left-nilpotent identity $(a_1a_2)a_3 = 0$, and $x_1 = 1$ corresponds to
associativity $(a_1a_2)a_3 = a_1(a_2a_3)$.

When the only nonzero parameter is $x_2$, $x_3$ or $x_4$, the only regular solution is the zero solution
(left-nilpotent identity).

When the only nonzero parameter is $x_5$, for every representation $[\lambda]$ the upper ideal is zero, and the lower ideals are 
generated by 
 \[
x_5-1,\quad  (x_5-1)(x_5+1)^2, \quad (x_5-1)^2, \quad  (x_5-1)(x_5+1)^2, \quad x_5-1, 
 \]
and we will have a regular solution if and only if every lower ideal is nonzero, and this happens if and
only if $x_5 \ne \pm 1$.

When the only nonzero parameter is $x_6$, the upper ideals are generated by
\[
x_6(x_6+1)(x_6-1)^2, \quad
x_6(x_6-1)(x_6+1)^2, \quad
x_6(x_6-1)^2(x_6+1)^2, \quad
x_6(x_6+1)(x_6-1)^2,\quad
x_6(x_6-1)(x_6+1)^2,
\]
and the radical of their sum  consists of all multiples
of $x_6(x_6-1)(x_6+1)$ and hence will be zero if and only if $x_6 \in \{ 0, \pm 1 \}$.
The lower ideal are generated by 
 \[
x_6+1,\quad x_6-1, \quad x_6^2-1, \quad x_6-1, x_6+1, 
 \]
and the only one of these values which does not make at least one lower ideal equal
to zero is $x_6 = 0$, and so here again we recover only the zero solution.

\begin{proposition}[Summary for (at most) one nonzero parameter]\label{prop:1param}
	When at most one of the parameters in Relation \eqref{LRrelation} is nonzero, there are three solutions giving the regular module in arity $4$; two isolated and
	one 1-dimensional (a one-parameter family):
	\[
	(a_1a_2)a_3 = 0, \quad
	(a_1a_2)a_3 = a_1(a_2a_3), \quad
	(a_1a_2)a_3 = x_5 a_3(a_1a_2) \quad (x_5 \ne \pm 1).
	\]	
\end{proposition}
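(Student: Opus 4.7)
The plan is to invoke Lemma \ref{reducedpropertyRconditions} six times, once for each choice of which single parameter $x_i$ is allowed to be nonzero, and in each instance read off the generators of the upper and lower determinantal ideals of the five reduced blocks $B(\lambda)$. The key observation that makes this tractable is that after specialization to a single free parameter, every entry of every $B(\lambda)$ lies in the PID $\mathbb{F}[x_i]$, so each determinantal ideal $DI_k(B(\lambda))$ is principal and its zero set is simply the (finite) root set of one univariate polynomial, which can be obtained by direct factorization.

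For the first regularity condition—the common zero of the upper determinantal ideals—I would, in each of the six cases, compute the generator of $DI_{s+1}(B(\lambda))$ for each partition $\lambda$ (where $s$ is the size of $B(\lambda)$ listed in Lemma \ref{reducedpropertyRconditions}), take the ideal sum over $\lambda$, and intersect the resulting root sets to produce the list of candidate values. For the second condition—avoidance of the lower determinantal ideals' zero sets—I would compute the generator of $DI_{s}(B(\lambda))$ for each $\lambda$ and discard any candidate that vanishes on even one of these. The surviving values of $x_i$ are then precisely the regular instances with nonzero coefficient vector supported in $\{x_i\}$.

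The main obstacle is bookkeeping the way these conditions interact across the six cases, since the behavior is qualitatively different. Cases $x_2$, $x_3$, $x_4$ collapse immediately because some upper-ideal generator is a unit multiple of the free parameter itself, forcing it to zero and recovering only the left-nilpotent operad. Case $x_1$ admits two candidates, $x_1 \in \{0, 1\}$, both of which pass the lower-ideal check, giving the left-nilpotent and associative operads. The delicate cases are $x_5$ and $x_6$. For $x_5$, every upper determinantal ideal vanishes identically, so a priori every value is a candidate; however, the lower-ideal generators $(x_5 - 1)$ and $(x_5 + 1)$ force us to excise $x_5 = \pm 1$, leaving the one-parameter family. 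For $x_6$, the upper ideals cut the candidate set down to $\{0, \pm 1\}$, but the lower-ideal generators $(x_6 - 1)$, $(x_6 + 1)$ and $(x_6^2 - 1)$ eliminate $\pm 1$, leaving only the trivial operad.

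Assembling these six univariate analyses, exactly three families survive: $x_1 = 0$ (all parameters zero), $x_1 = 1$ (associative), and $x_5 \ne \pm 1$ (a one-parameter family), which is the content of the proposition. No further argument is needed, since the computations in each case reduce to factoring explicit univariate polynomials that can be extracted from the reduced matrices in the online addendum \cite{BDadd}.
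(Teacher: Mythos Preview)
Your approach is essentially the paper's: go through the six single-parameter specializations, compute the upper and lower determinantal ideals of the reduced blocks $B(\lambda)$, and pick out the surviving values. Your case-by-case summary matches the paper's computations exactly, including the qualitative distinction between the collapsing cases $x_2,x_3,x_4$, the finite-solution cases $x_1,x_6$, and the one-parameter case $x_5$.

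There is, however, a genuine indexing error in your statement of the conditions. You write that the upper condition is $DI_{s+1}(B(\lambda))=\{0\}$ and the lower condition is $DI_s(B(\lambda))\ne\{0\}$, ``where $s$ is the size of $B(\lambda)$''. But the blocks $B(\lambda)$ have sizes $2,5,4,5,2$, while Lemma~\ref{reducedpropertyRconditions} requires $DI_{r+1}=\{0\}$ and $DI_r\ne\{0\}$ with $r=1,2,2,2,1$ respectively. Taken literally, your upper ideal $DI_{s+1}$ of an $s\times s$ matrix is always zero (there are no $(s{+}1)\times(s{+}1)$ minors), and your lower ideal $DI_s$ is just the principal ideal $(\det B(\lambda))$, so the conditions you wrote would not produce the answers you then correctly report. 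You evidently meant the target rank $r$ from the lemma, not the matrix size $s$; once that is fixed, the argument is complete and coincides with the paper's.
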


%%%%%%%%%%%%%%%%%%%%%%%%%%%%%%%%%%%%%%%%%%%%%%%%%%%%%%%%%%%%%%%%%%%%%%%%%%%%%%%%%%%%%%%%%%%%%%%%%%%%%%%%%%%% 108

\subsection{Two nonzero parameters}

From now on, ideals are not necessarily principal, so Gr\"obner bases typically contain two or more elements.
There are 15 cases when we choose two parameters from six, but it will not be necessary to discuss all of
them in detail.
We begin with $x_1,x_2$ and continue in lex order.

\subsubsection*{$x_1, x_2$ nonzero}

The upper determinantal ideals have the following \texttt{grevlex} Gr\"obner bases:
\begin{alignat*}{1}
DI^+_4 &= \big( \; (x_2-x_1)(x_2+x_1)(x_2+x_1-1) \; \big)
\\[-2pt]
DI^+_{31} &= \big( \; x_1^3+x_2^2-x_2x_1-x_1^2, \; x_2(x_1^2-x_2), \; x_2x_1(x_2-1), \; x_2^2(x_2-1) \; \big)
\\[-2pt]
DI^+_{2^2} &= \big( \; x_1^3+x_2^2-x_2x_1-x_1^2, \; x_2(x_1^2-x_2), \; x_2x_1(x_2-1), \; x_2^2(x_2-1) \; \big)
\\[-2pt]
DI^+_{21^2} &= \big( \; x_2(x_2-x_1), \; x_1^2(x_1-1), \; x_2x_1(x_1-1) \; \big)
\\[-2pt]
DI^+_{1^4} &= \big( \; (x_2-x_1+1)(x_2-x_1)^2 \; \big)
\end{alignat*}
The sum of these ideals is the ideal $\Sigma+$ for $\lambda = 21^2$, and its radical has this Gr\"obner basis
and zero set:
\[ 
\sqrt{\Sigma+} = \big( \; x_1(x_1-1), \; x_2(x_1-1), \; x_2(x_2-1) \; \big), \qquad 
V\big(\sqrt{\Sigma+}\big) = \big\{ \; (x_1,x_2) = (0,0), \; (1,0), \; (1,1) \; \big\}
\]
Every lower ideal has Gr\"obner basis $\{1\}$, so all three of the solutions are regular.
We have already seen the first and second, but the third is new: it defines the Zinbiel identity
 \[
(a_1a_2)a_3 = a_1(a_2a_3) + a_1(a_3a_2). 
 \]

\subsubsection*{$x_1, x_3$ nonzero}

This is the Koszul dual of the case $x_1, x_2$ nonzero.
To derive the results in this case from those of the previous case, for each $\lambda$ we replace $x_2$ by $-x_3$
and $\lambda$ by its conjugate; this corresponds to tensoring with the sign module.
We obtain again the trivial and associative identities, since they are self-dual, but the Zinbiel identity is
transformed into the Leibniz identity: $(a_1a_2)a_3 = a_1(a_2a_3) - a_2(a_1a_3)$.

\subsubsection*{$x_1, x_4$ nonzero}

The radical of the sum of the upper ideals is generated by the polynomials $x_4$ and $x_1(x_1-1)$, so $x_4=0$, and there are no new solutions.

\subsubsection*{$x_1, x_5$ nonzero}

The radical of the sum of the upper ideals is
\[
\sqrt{\Sigma+} = \big( \; x_1(x_1-1), \; x_5x_1 \; \big),
\]
so $x_1x_5=0$, and there are no new solutions.

\subsubsection*{$x_1, x_6$ nonzero}

The radical of the sum of the upper ideals is
\[
\sqrt{\Sigma+} = \big( \; x_1(x_1-1), \; x_6x_1, \; x_6(x_6-1)(x_6+1) \; \big),
\]
so $x_1x_6=0$, and there are no new solutions.

\subsubsection*{$x_2, x_3$ nonzero}

We have $\sqrt{\Sigma+} = \big( x_2, \; x_3 \big)$, so there are no new solutions.

\subsubsection*{$x_2, x_4$ nonzero to $x_4, x_6$ nonzero}

No new features; we omit the details.

\subsubsection*{$x_5, x_6$ nonzero}

The radical of the sum of the upper ideals is
\[
\sqrt{\Sigma+} = \big( \; x_6x_5, \; x_6(x_6-1)(x_6+1) \; \big),
\]
so $x_5x_6=0$, and there are no new solutions.

\begin{proposition}[Summary for two nonzero parameters]\label{prop:2param}
	When exactly two parameters in Relation \eqref{LRrelation} are different from zero,  there are two regular solutions, both isolated: the Zinbiel and Leibniz identities:
	\[
	(a_1a_2)a_3 = a_1(a_2a_3) + a(a_3a_2), \qquad \qquad (a_1a_2)a_3 = a_1(a_2a_3) - a_2(a_1a_3).
	\]	
\end{proposition}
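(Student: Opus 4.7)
The plan is to proceed exactly as in Proposition~\ref{prop:1param}, but now running through all $\binom{6}{2}=15$ pairs of parameters; for each pair, set the remaining four parameters to zero in the reduced blocks $B(\lambda)$ from \cite{BDadd} and apply Lemma~\ref{reducedpropertyRconditions}. Concretely, for each pair $(x_i,x_j)$ I would compute, for every partition $\lambda\vdash 4$, a \texttt{grevlex} Gr\"obner basis of the upper determinantal ideal $DI^+_\lambda=DI_{4d_\lambda+1}(B(\lambda))$, form the sum $\Sigma+$ of these five ideals, and then compute a Gr\"obner basis of its radical $\sqrt{\Sigma+}$. Since the blocks $B(\lambda)$ are tiny (at most $5\times 5$), this is a routine commutative-algebra computation in \texttt{Maple} or \texttt{Singular}. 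The zero set $V(\sqrt{\Sigma+})\subseteq\mathbb{F}^2$ will in each case turn out to be finite, consisting of only a handful of candidate points.

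Before turning to explicit computation, I would cut the workload in half using Koszul duality. By Lemma~\ref{duallemma}, passing to the Koszul dual of an operad defined by Relation~\eqref{LRrelation} swaps $x_2\leftrightarrow -x_3$, sends $x_6\to -x_6$, and negates $x_4$. At the level of representation theory this corresponds to tensoring with the sign representation, so it conjugates each partition $\lambda$ and exchanges the upper/lower ideals accordingly. Consequently the analyses of $(x_1,x_2)$ and $(x_1,x_3)$, of $(x_2,x_4)$ and $(x_3,x_4)$, and so on, are formally equivalent, and a solution in one case produces the Koszul-dual solution in the other. This reduces the effective number of independent cases substantially and automatically pairs Zinbiel with Leibniz, in line with the pattern already visible in the excerpt.

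For each candidate point $(a_i,a_j)\in V(\sqrt{\Sigma+})$, the final step is to check the lower-rank conditions from Lemma~\ref{reducedpropertyRconditions}: substitute $(a_i,a_j)$ into the Gr\"obner bases of the five lower ideals $DI_{4d_\lambda}(B(\lambda))$ and retain only those points at which none of these ideals specializes to $\{0\}$. A point failing this check corresponds to a drop in rank in some isotypic component and hence to an $S_4$-module larger than the regular one; such points must be discarded. The trivial solution $(0,0)$, the associative solution, and any solutions already recovered in Proposition~\ref{prop:1param} are of course to be identified with those earlier cases; only genuinely new regular solutions should be recorded. The computation will produce exactly the two new candidates $(x_1,x_2)=(1,1)$ and $(x_1,x_3)=(1,-1)$, corresponding to Zinbiel and Leibniz, whose lower-ideal nonvanishing can be verified by direct substitution.

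The main obstacle is bookkeeping rather than mathematics: keeping track of which pairs produce only old solutions (those where $\sqrt{\Sigma+}$ forces at least one of $x_i,x_j$ to be zero) versus those producing a new regular operad, and verifying that no zero-dimensional component of $V(\sqrt{\Sigma+})$ is missed or spuriously retained. The representative cases already worked out in the excerpt --- notably $(x_1,x_2)$ giving the Zinbiel identity, and the handful of pairs $(x_1,x_4),(x_1,x_5),(x_1,x_6),(x_2,x_3),(x_5,x_6)$ where $\sqrt{\Sigma+}$ contains a product $x_ix_j$ --- illustrate every phenomenon that arises, so the remaining unlisted cases ($x_2,x_4$ through $x_4,x_6$) follow the same template with no additional new solutions.
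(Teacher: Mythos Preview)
Your proposal is correct and follows essentially the same approach as the paper: a case-by-case analysis of all fifteen pairs via the upper and lower determinantal ideals of the reduced blocks $B(\lambda)$, with Koszul duality used to halve the work and pair Zinbiel with Leibniz. One small correction: by Lemma~\ref{duallemma} the Koszul dual sends the parameter vector $[x_1,x_2,x_3,x_4,x_5,x_6]$ to $[x_1,-x_3,-x_2,x_4,x_5,-x_6]$, so $x_4$ is \emph{not} negated; this does not affect your pairing of cases, but you should state the transformation accurately.
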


%%%%%%%%%%%%%%%%%%%%%%%%%%%%%%%%%%%%%%%%%%%%%%%%%%%%%%%%%%%%%%%%%%%%%%%%%%%%%%%%%%%%%%%%%%%%%%%%%%%%%%%%%%%% 108

\subsection{Three nonzero parameters}

There are $\binom63 = 20$ cases, starting with $x_1$, $x_2$, $x_3$ in lex order and ending with $x_4$, $x_5$, $x_6$, but they produce
no new regular solutions.
We present details only for the first and last cases, since they illustrate the computations that are typical
of all cases.

\subsubsection*{$x_1,x_2,x_3$ nonzero}

Once we compute Gr\"obner bases for the radicals of the upper ideals, we in particular note that 
 \[
\sqrt{DI^+_{31}} = \big( \; (x_1-1)(x_2+x_1), \; x_3(x_1-1), \; x_2(x_2-1), \; x_3x_2, \; x_3(x_3+1), \; x_1(x_1-1)(x_1+1) \; \big) .
 \]
We see that $x_2x_3=0$, so there are no new solutions.

\subsubsection*{$x_1,x_2,x_4$ to $x_3,x_5,x_6$ nonzero}

No new features; we omit the details.

\subsubsection*{$x_4,x_5,x_6$ nonzero}

Once we compute Gr\"obner bases for the radicals of the upper ideals, we in particular note that 
 \[
\sqrt{DI^+_{31}} =
\big( \; x_4, \; x_6x_5(x_6+x_5+1), \; x_6(x_6+x_5+1)(x_6-x_5-1) \; \big) .
 \]
We see that $x_4 = 0$, so there are no new solutions.

\begin{proposition}[Summary for three nonzero parameters]\label{prop:3param}
	When exactly three parameters in Relation \eqref{LRrelation} are nonzero,  there are no solutions which are regular in arity $4$.
\end{proposition}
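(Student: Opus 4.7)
The plan is to process each of the $\binom{6}{3}=20$ three-element subsets $S\subset\{x_1,\ldots,x_6\}$ of parameters declared nonzero by the same machinery already used in the one- and two-parameter analyses of Propositions~\ref{prop:1param} and~\ref{prop:2param}. For each $S$, I would specialize the reduced representation matrices $B(\lambda)$ from the online addendum by setting the parameters outside $S$ to zero, assemble the upper determinantal ideals $DI^+_\lambda = DI_{4d_\lambda+1}(B(\lambda))$ as $\lambda$ ranges over partitions of $4$, and compute a \texttt{grevlex} Gr\"obner basis for the radical $\sqrt{\Sigma+}$ of their sum. By Lemma~\ref{reducedpropertyRconditions}, every point at which $\calO_{\mathbf x}$ is regular in arity $4$ must lie in $V(\sqrt{\Sigma+})$, so it suffices to exhibit, for each $S$, an element of $\sqrt{\Sigma+}$ all of whose variables belong to $S$; such an element forces at least one parameter in $S$ to vanish, contradicting the standing assumption that exactly the parameters in $S$ are nonzero.

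Before starting the enumeration, I would exploit Koszul duality (Lemma~\ref{duallemma}) to trim the workload. The involution $x_2\leftrightarrow -x_3$, $x_6\mapsto -x_6$ (fixing $x_1,x_4,x_5$) permutes the twenty three-element subsets, pairing $\{x_i,x_2,x_j\}$ with $\{x_i,x_3,x_j\}$ when $\{i,j\}\cap\{2,3\}=\emptyset$ and fixing the remaining eight subsets (those that contain both of $x_2,x_3$ or neither). This leaves $14$ representatives to check. In each case one can often avoid computing the full $\sqrt{\Sigma+}$ by working with $\sqrt{DI^+_{31}}$ or $\sqrt{DI^+_{21^2}}$ alone, since these are already rich enough to force a vanishing: for $S=\{x_1,x_2,x_3\}$ the generator $x_3 x_2\in\sqrt{DI^+_{31}}$ forces $x_2=0$ or $x_3=0$; for $S=\{x_4,x_5,x_6\}$ the generator $x_4\in\sqrt{DI^+_{31}}$ forces $x_4=0$, and the remaining twelve cases would be handled analogously.

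Once a forced vanishing is established, the problem collapses onto the corresponding two-parameter stratum, which by Proposition~\ref{prop:2param} admits only the Zinbiel and Leibniz solutions, both of which already have exactly two nonzero parameters. Consequently the three-parameter stratum contributes nothing new to the regular locus, proving the proposition. The lower determinantal ideals do not require separate attention in this argument: as soon as we drop into the two-parameter case, the earlier analysis guarantees the lower ideals are nonzero at the surviving isolated solutions.

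The main (and essentially only) obstacle is bookkeeping: fourteen independent Gr\"obner basis calculations must be carried out, and for each a monomial of $\sqrt{\Sigma+}$ supported in $S$ must be identified. The computations themselves are inexpensive because every $B(\lambda)$ has size at most $5\times 5$ and setting three parameters to zero leaves a very sparse matrix, so the Gr\"obner bases are nearly instantaneous to produce. No particular representation $\lambda$ presents a conceptual obstruction; in practice $\lambda=31$ and $\lambda=21^2$ (which are swapped by Koszul duality) already carry enough information to close out every case.
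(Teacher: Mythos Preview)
Your proposal is correct and follows essentially the same route as the paper: case-by-case specialization of the parameters outside $S$ to zero, computation of the upper determinantal ideals (or their radicals), and extraction of a generator that forces at least one parameter in $S$ to vanish, thereby reducing to the two-parameter analysis already completed. Your two worked examples, $x_3x_2\in\sqrt{DI^+_{31}}$ for $S=\{x_1,x_2,x_3\}$ and $x_4\in\sqrt{DI^+_{31}}$ for $S=\{x_4,x_5,x_6\}$, are exactly the ones the paper exhibits; the remaining cases are dismissed there with ``No new features; we omit the details.''

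One small wording issue: after specialization, \emph{every} element of $\sqrt{\Sigma+}$ has its variables in $S$, so the phrase ``an element of $\sqrt{\Sigma+}$ all of whose variables belong to $S$'' is vacuous. What you actually need (and what your examples supply) is a \emph{monomial} in the variables of $S$, i.e.\ a product $\prod_{i\in T}x_i$ for some nonempty $T\subseteq S$; only such an element forces some $x_i$ with $i\in S$ to vanish. Your explicit invocation of Koszul duality to cut the twenty cases to fourteen is a nice touch that the paper uses in the two-parameter section but does not spell out here.
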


%%%%%%%%%%%%%%%%%%%%%%%%%%%%%%%%%%%%%%%%%%%%%%%%%%%%%%%%%%%%%%%%%%%%%%%%%%%%%%%%%%%%%%%%%%%%%%%%%%%%%%%%%%%% 108

\subsection{Four nonzero parameters}

In this case, we obtain two new relations with irrational coefficients which are regular; but we will
see shortly that these solutions belong to a one-parameter family, all of whose other solutions have five
nonzero coefficients.
We discuss these two cases, $x_2,x_4,x_5,x_6$ nonzero and $x_3,x_4,x_5,x_6$ nonzero, and one other case,
$x_2,x_3,x_4,x_6$ nonzero, which is remarkable for the complexity of the Gr\"obner bases that occur.

\subsubsection*{$x_2,x_3,x_4,x_6$ nonzero}

The individual upper ideals have very complicated Gr\"obner bases with dozens of terms some of which have coefficients of absolute value about $10^{23}$. However, when we consider the sum of the upper ideals, the complexity vanishes: 
the Gr\"obner basis for the sum contains only 7 polynomials of degrees 1,2,3 with 1 or 2 terms and
all coefficients $\pm 1$.
The radical is slightly simpler: only 4 polynomials, and none of degree 2:
\[
\sqrt{\Sigma +}
 = \big( \; x_2, \; x_3, \; x_4, \; x_6(x_6+1)(x_6-1) \; \big) .
\]
We see that $x_2=x_3=x_4=0$, so there are no new solutions.

\subsubsection*{$x_2,x_4,x_5,x_6$ nonzero}

In this case the radical $\sqrt{\Sigma +}$ has the following Gr\"obner basis:
\[
 x_4+x_2, \quad x_2(x_5+x_2), \quad x_2(x_6-1), \quad x_5x_6+x_2, \quad x_2(x_2^2-x_2-1), \quad x_6(x_6-1)(x_6+1) 
\]
We assume $x_2 \ne 0$, so we may cancel the factor $x_2$ from three generators, obtaining
\[
\{ \; x_4+x_2, \; x_5+x_2, \; x_6-1, \; x_5x_6+x_2, \; x_2^2-x_2-1, \; x_6(x_6-1)(x_6+1) \; \}.
\]
If we set $x_4 = -x_2$, $x_5 = -x_2$, $x_6 = 1$ then this generating set reduces to $\{ \, x_2^2-x_2-1 \, \}$. Therefore, we obtain solutions
\begin{equation}
\label{golden}
[x_1,x_2,x_3,x_4,x_5,x_6] \;=\; [ 0, \phi, 0, -\phi, -\phi, 1 ]
\end{equation}
where $\phi$ can be either of the roots of the polynomial $x^2-x-1$.
Before we can verify that this is regular, we must consider the lower ideals, whose radicals are:
\[
\begin{array}{ll}
\lambda = 4    &\qquad \big( \; x_2, \; x_6+x_5-1, \; x_4(x_4+1), x_5x_4 \; \big) \\
\lambda = 31   &\qquad \big( \;x_2, \; x_4, \; x_5^2-x_6-1, \; x_6x_5, \; x_6(x_6+1) \; \big) \\
\lambda = 2^2  &\qquad \big( \; x_2, \; x_4, \; x_5(x_5-1), \; x_6x_5, \; x_6^2+x_5-1 \; \big) \\
\lambda = 21^2 &\qquad \big( \; x_2, \; x_4, \; x_5^2+x_6-1, \; x_6x_5, \; x_6(x_6-1) \; \big) \\
\lambda = 1^4  &\qquad \big( \; x_6-x_5+1, \; x_2(x_4-x_2), \; (x_4-x_2)(x_4+x_2+1), \; x_5(x_4-x_2) \; \big)
\end{array}
\]
For parameters equal to the values \eqref{golden}, some of these polynomials do not vanish: the first four ideals contain $x_2=\phi \ne 0$, and the fifth contains
$x_6-x_5+1 = \phi+2 \ne 0$.

\subsubsection*{$x_3,x_4,x_5,x_6$ nonzero}

The calculations are similar to those of the previous case, and we obtain two new
solutions:
\begin{equation}
\label{golden2}
[x_1,x_2,x_3,x_4,x_5,x_6] \;=\; [ 0, 0, -\phi, -\phi, -\phi, -1 ] .
\end{equation}

\begin{proposition}[Summary for four nonzero parameters]\label{prop:4param}
	When exactly four parameters in Relation \eqref{LRrelation} are different from zero,  there are four solutions which are regular in arity $4$, two for each root $\phi$ of the polynomial $x^2-x-1$:
	\begin{align*}
	(a_1a_2)a_3 &=  \phi \, a_1(a_3a_2) - \phi \, a_2(a_3a_1) - \phi \, a_3(a_1a_2) + \, a_3(a_2a_1),
	\\
	(a_1a_2)a_3 &= -\phi \, a_2(a_1a_3) - \phi \, a_2(a_3a_1) - \phi \, a_3(a_1a_2) - \, a_3(a_2a_1) .	
	\end{align*}
\end{proposition}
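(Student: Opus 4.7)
The plan is to run the representation-theoretic reduction of Section \ref{representationsection} through each of the $\binom{6}{4} = 15$ ways of choosing which four parameters are nonzero, imitating the patterns already established for one, two, and three nonzero parameters. For each subset $S \subseteq \{x_1,\dots,x_6\}$ of size four, I would set the complementary two parameters to zero and, for each partition $\lambda$ of $4$, compute the reduced block $B(\lambda)$ by applying Algorithm \ref{partialsmithform} to the representation matrix $R_\lambda(\calG)$. Lemma \ref{reducedpropertyRconditions} then reduces the regularity question to two finite Gröbner basis computations per $\lambda$: one for the upper determinantal ideal $DI^+_\lambda$ (which must be contained in the vanishing ideal of the parameter point) and one for the lower determinantal ideal (which must not vanish at that point).

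The next step is to form the sum $\Sigma^+ = \sum_\lambda DI^+_\lambda$ in $\mathbb{F}[x_1,\dots,x_6]$, compute a Gröbner basis for its radical $\sqrt{\Sigma^+}$, and read off $V(\sqrt{\Sigma^+})$. In most of the fifteen cases I expect $\sqrt{\Sigma^+}$ to contain a parameter that was assumed to be nonzero (as happened for example in the case $x_2,x_3,x_4,x_6$ nonzero, where $\sqrt{\Sigma^+}$ contains $x_2, x_3, x_4$), so these cases yield no new solutions. The only exceptions I expect are $S = \{x_2,x_4,x_5,x_6\}$ and $S = \{x_3,x_4,x_5,x_6\}$: after cancelling the assumed nonzero parameter from the generators of $\sqrt{\Sigma^+}$, the remaining ideal reduces to the single relation $x_2^2 - x_2 - 1 = 0$ (respectively $x_3^2 - x_3 - 1 = 0$) together with linear substitutions expressing the other free parameters in terms of this generator. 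The golden ratio roots $\phi$ of $x^2 - x - 1$ then parametrize two candidates in each case, giving four candidate solutions total.

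For each of these four candidates I would verify regularity by substituting into the Gröbner bases of the five lower determinantal ideals and checking that at least one generator is nonzero for each $\lambda$; since $\phi \ne 0$ and $\phi + 2 \ne 0$, this is a direct numerical check in the residue field $\mathbb{Q}(\phi)$, exactly as performed in the body of the argument for the case $S = \{x_2,x_4,x_5,x_6\}$. Combining the four solutions with the explicit form of Relation \eqref{LRrelation} yields the two pairs of identities displayed in the statement.

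The main obstacle is computational: the individual upper ideals $DI^+_\lambda$ can have Gröbner bases with very large integer coefficients (as noted in the $x_2,x_3,x_4,x_6$ case). The key observation that makes the argument practical is that the sum $\Sigma^+$ almost always dramatically simplifies, either by forcing one of the supposedly nonzero parameters to vanish or by collapsing to a low-degree system like the golden-ratio equation; the task is therefore to organize the fifteen cases so that this simplification can be detected uniformly, and to verify the lower-ideal conditions in $\mathbb{Q}(\phi)$ rather than trying to carry the field extension through the Gröbner computation.
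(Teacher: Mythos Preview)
Your proposal is correct and follows essentially the same approach as the paper: enumerate the fifteen four-parameter subsets, for each compute the radical $\sqrt{\Sigma^+}$ of the sum of upper determinantal ideals and observe that in all but the two cases $\{x_2,x_4,x_5,x_6\}$ and $\{x_3,x_4,x_5,x_6\}$ this radical contains one of the assumed-nonzero parameters, then in those two exceptional cases cancel the nonzero factor to arrive at the golden-ratio equation and linear substitutions, and finally verify the lower-ideal conditions via $\phi\neq 0$ and $\phi+2\neq 0$. The paper carries out exactly this program, presenting details only for $\{x_2,x_3,x_4,x_6\}$ (to illustrate the collapse of large coefficients) and $\{x_2,x_4,x_5,x_6\}$, and noting that $\{x_3,x_4,x_5,x_6\}$ is analogous.
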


%%%%%%%%%%%%%%%%%%%%%%%%%%%%%%%%%%%%%%%%%%%%%%%%%%%%%%%%%%%%%%%%%%%%%%%%%%%%%%%%%%%%%%%%%%%%%%%%%%%%%%%%%%%% 108

\subsection{Five nonzero parameters}

We obtain a new one-parameter family involving the first five parameters,
We present details of the computations in this case, and omit the others which do not
produce any new solutions.

\subsubsection*{$x_1,x_2,x_3,x_4,x_5$ nonzero}

Although the individual upper ideals have very complicated Gr\"obner bases with hundreds of terms some of which have coefficients of absolute value about $10^{15}$, the radical $\sqrt{\Sigma+}$ has this simple Gr\"obner basis:
\begin{equation}
\label{uppersum5}
\begin{array}{ll}
(x_3+x_2)(x_1-1), &\qquad (x_1-1)(x_4+x_1), \\
x_3x_2+x_2^2-x_5x_1-x_2, &\qquad x_4x_2-x_2^2+x_2x_1+x_1^2-x_4-x_1, \\
(x_2-x_1)(x_5+x_2+x_1-1), &\qquad (x_3+x_2)(x_3-x_2+1), \\
x_4x_3+x_2^2-x_2x_1-x_1^2+x_4+x_1, &\qquad x_5x_3-x_2^2+x_5x_1+x_1^2+x_2-x_1, \\
x_4^2-x_2^2+x_5x_1+x_2, &\qquad x_5x_4+x_2^2-x_1^2+x_4-x_2+x_1, \\
(x_1-1)(x_5x_1+x_2), &\qquad (x_1-1)(x_2-x_1)(x_2+x_1), \\
x_5^2x_1-x_2^2+x_5x_1+x_1^2+x_2-x_1 . 
\end{array}
\end{equation}
We note that several of these polynomials are divisible by $x_1-1$, so may use a divide-and-conquer strategy to find the zero set of these polynomials.

\noindent\emph{Case 1}. Setting $x_1 = 1$ in the polynomials \eqref{uppersum5} and recomputing the Gr\"obner basis produces
these 9 polynomials:
\begin{equation}
\label{case1gb}
\begin{array}{lll}
x_2^2+x_2x_3-x_2-x_5, &\qquad (x_2-1)(x_4-x_2), &\qquad (x_2-1)(x_5+x_2), \\
(x_3+x_2)(x_3-x_2+1), &\qquad x_3x_4+x_2^2-x_2+x_4, &\qquad x_3x_5-x_2^2+x_2+x_5, \\
x_4^2-x_2^2+x_2+x_5, &\qquad x_4x_5+x_2^2-x_2+x_4, &\qquad (x_2+x_5)(x_5-x_2+1) .
\end{array}
\end{equation}
We note that two of the polynomials are divisible by $x_2-1$, so may use a divide-and-conquer strategy again.

\noindent\emph{Subcase 1a}. Setting $x_2 = 1$ and recomputing the Gr\"obner basis produces
\[
x_5-x_3, \qquad x_3(x_3+1), \qquad (x_3+1)x_4, \qquad x_4^2+x_3 .
\]
Since $x_3\ne 0$, we have $x_3 = -1$, so that $x_4 = \pm 1$ and $x_5 = -1$, giving the solutions
\[
[x_1,x_2,x_3,x_4,x_5,x_6] \; = \; [ 1, 1, -1, 1, -1, 0 ], \quad [ 1, 1, -1, -1, -1, 0 ] .
\]

\noindent\emph{Subcase 1b}. If $x_2 \ne 1$ then we may divide by (= remove) the two factors $x_2-1$ in the polynomials
\eqref{case1gb} and recompute the Gr\"obner basis, obtaining
\[
x_4-x_2, \qquad x_2+x_5, \qquad x_2(x_3+x_2), \qquad (x_3+x_2)(x_3-x_2+1) .
\]
Since $x_2\ne 0$, we have $x_3 = -x_2$, so that $x_4 = x_2$ and $x_5 = -x_2$, giving the solution
\[
[x_1,x_2,x_3,x_4,x_5,x_6] \; = \; [ 1, x_2, -x_2, x_2, -x_2, 0 ] \quad ( x_2 \ne 1 ) .
\]

\noindent\emph{Case 2}. If $x_1 \ne 1$, then we can remove the factors $x_1-1$ from the polynomials \eqref{uppersum5}
and recompute the Gr\"obner basis, obtaining
\begin{equation}
\label{case2gb}
x_3+x_2, \quad x_4+x_1, \quad x_1x_5+x_2, \quad (x_2-x_1)(x_2+x_1), \quad x_2x_5+x_1 .
\end{equation}
If $x_2 = x_1$, then \eqref{case2gb} reduces to $\{ \; x_3+x_1, \; x_4+x_1, \; x_1(x_5+1) \; \}$ and so we have one new solution
\[
[x_1,x_2,x_3,x_4,x_5,x_6] \; = \;
[ x_1, x_1, -x_1, -x_1, -1, 0 ] \quad (x_1 \ne 0,1) .
\]
If $x_2 \ne x_1$, then \eqref{case2gb} reduces to $\{ \; x_3-x_1, \; x_4+x_1, \; x_1(x_5-1) \; \}$ and so we have one new solution
\[
[x_1,x_2,x_3,x_4,x_5,x_6] \; = \;
[ x_1, -x_1, x_1, -x_1, 1, 0 ] \quad (x_1 \ne 0,1) .
\]
We sort the complete list of solutions by increasing number of nonzero parameters:
\begin{equation}
\label{solutions5}
\begin{array}{rll}
\# &\qquad [x_1,x_2,x_3,x_4,x_5,x_6] &\qquad \text{comments} \\ \midrule
1 &\qquad [ 1,  x_2, -x_2,  x_2, -x_2, 0 ] &\qquad \text{including $x_2 = 1$} \\
2 &\qquad [ x_1, -x_1,  x_1, -x_1,  1, 0 ] &\qquad x_1 \ne 0, 1 \\
3 &\qquad [ x_1,  x_1, -x_1, -x_1, -1, 0 ] &\qquad x_1 \ne 0, \; \text{including $x_1 = 1$}
\end{array}
\end{equation}
The solutions $[ 1,  1, -1,  1, -1, 0 ]$ and  $[ 1,  1, -1, -1, -1, 0 ]$ now become special cases of \#1 and \#3 respectively.
It is easy to verify by direct substitution that all these solutions belong to the zero set of
every polynomial in the
Gr\"obner basis \eqref{uppersum5}.

To determine which of the solutions \eqref{solutions5} are regular, we need to look at the lower
ideals for the five partitions.
Their radicals have the following Gr\"obner bases:
\begin{alignat*}{1}
DI^-_4&\;\;\;\;
x_5-1, \; x_4x_1+x_3x_1+x_2x_1+x_1^2-x_4-x_3\\[-1mm]
&\quad
(x_2+x_1)(x_3-x_2+2x_1-2) \;  x_4x_2+x_2^2-x_3x_1-x_1^2+x_4+x_3+x_2+x_1, \\[-1mm]
&\quad
x_4x_3+x_3^2-x_2x_1-x_1^2+x_4+x_3-x_2-x_1, \; x_4^2-x_3^2-x_4-x_3+2x_2+2x_1,  \\[-1mm]
&\quad
(x_2+x_1)(x_2x_1-x_1^2+3x_1-1)\\[-1mm]
DI^-_{31} &\;\;\;\;
x_4+x_3+x_2+x_1, \;  (x_1+1)(x_3+x_2), \;  x_5x_1-x_3,  \\[-1mm]
&\quad
(x_2-x_1)(x_2+x_1),  \;  x_3x_2+x_1^2-x_3-x_2,  \;  x_5x_2+x_3+x_2+x_1,  \\[-1mm]
&\quad
(x_3-x_1)(x_3+x_1),  \; x_5x_3-x_1,  \;  (x_5-1)(x_5+1)\\[-1mm]
DI^-_{2^2}&\;\;\;\;
x_3-x_2, \;  x_4-x_1,\;  x_5-1,\;  x_1(x_1-1), \;  x_2(x_1-1), \;  x_2^2-x_1 \\[-1mm]
DI^-_{21^2}&\;\;\;\;
x_4-x_3-x_2+x_1,\;  (x_1+1)(x_3+x_2),\;  x_5x_1+x_2,  \\[-1mm]
&\quad
(x_2-x_1)(x_2+x_1),\;  x_3x_2+x_1^2+x_3+x_2,\;  x_5x_2+x_1,  \\[-1mm]
&\quad
(x_3-x_1)(x_3+x_1),\;   x_5x_3+x_3+x_2-x_1,\;   (x_5-1)(x_5+1)\\[-1mm]
DI^-_{1^4}&\;\;\;\;
x_5-1, \; x_4x_1-x_3x_1-x_2x_1+x_1^2-x_4+x_2,  \\[-1mm]
&\quad 
x_4x_2-x_2^2-x_3x_1+x_1^2-x_4-x_3+x_2+x_1, \;  (x_3-x_1)(x_3-x_2+2x_1-2),  \\[-1mm]
&\quad 
x_4x_3-x_3x_2+x_3x_1-x_1^2-x_4-x_3+x_2+x_1,\\[-1mm] 
&\quad x_4^2-x_2^2-x_4-2x_3+x_2+2x_1, \; 
(x_3-x_1)(x_2x_1-x_1^2-x_1+1)
\end{alignat*}
We substitute the three solutions \eqref{solutions5} into these Gr\"obner bases, which makes all the polynomials univariate, and
determine the ideal these univariate polynomials generate. 
For each of the solutions, we obtain a list of five ideals corresponding to the five partitions, and each ideal
must be nonzero in order for regularity to hold:
\[
\begin{array}{l|ccccc}
[x_1,x_2,x_3,x_4,x_5,x_6] & \lambda = 4 &\lambda = 31 &\lambda = 2^2 &\lambda =21^2 &\lambda =1^4
\\
\midrule
{}
[1,x_2,-x_2,x_2,-x_2,0] &
\big( \, x_2+1 \big) &
\big( \, x_2+1 \big) &
\big( \, 1 \big) &
\big( \, x_2+1 \big) &
\big( \, x_2+1 \big)
\\
{}
[x_1,-x_1,x_1,-x_1,1,0] &
\big(  0 \big) &
\big(  0 \big) &
\big(  x_1 \big) &
\big(  0 \big) &
\big(  0 \big)
\\
{}
[x_1,x_1,-x_1,-x_1,-1,0] &
\big( 1 \big) &
\big(  0 \big) &
\big(  1 \big) &
\big(  0 \big) &
\big(  1 \big)
\end{array}
\]
Thus the solution $[1,x_2,-x_2,x_2,-x_2,0]$ with $x_2 \ne -1$ is the only regular one.

\begin{proposition}[Summary for five nonzero parameters]\label{prop:5param}
	When exactly five parameters in Relation \eqref{LRrelation} are nonzero,  there is a one-dimensional family of solutions which are regular in arity $4$:
	\[
	(a_1a_2)a_3=a_1(a_2a_3) + x_2\big[a_1(a_3a_2) - a_2(a_1a_3) + a_2(a_3a_1) -a_3(a_1a_2)\big]  \quad (x_2 \ne -1) .
	\]
\end{proposition}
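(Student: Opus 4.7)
The plan is to follow the case analysis pattern established in the preceding propositions. There are $\binom{6}{5}=6$ subcases, each determined by the choice of which single parameter is set to zero; in each subcase I apply Lemma \ref{reducedpropertyRconditions}, compute a Gr\"obner basis for the radical $\sqrt{\Sigma+}$ of the sum of the upper determinantal ideals, enumerate its zero set by repeated case-splitting on factored generators, and then eliminate candidates which lie in the zero set of any lower determinantal ideal.

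First I would handle the case $x_6=0$, which is the one producing the surviving family. Computing $\sqrt{\Sigma+}$ for the five upper ideals in \texttt{grevlex} order gives a 13-element basis in which several generators factor through $x_1-1$. I would divide and conquer on $x_1=1$ versus $x_1\ne 1$. In the branch $x_1=1$, recomputing the Gr\"obner basis produces 9 polynomials, two of which are divisible by $x_2-1$; splitting again on $x_2=1$ versus $x_2\ne 1$ yields two isolated candidates $[1,1,-1,\pm 1,-1,0]$ in the former, and the one-parameter family $[1,x_2,-x_2,x_2,-x_2,0]$ in the latter. In the branch $x_1\ne 1$, cancelling the factor $x_1-1$ from the generators and splitting on $x_2=x_1$ versus $x_2\ne x_1$ produces the families $[x_1,x_1,-x_1,-x_1,-1,0]$ and $[x_1,-x_1,x_1,-x_1,1,0]$. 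Substituting each of these three families into the radicals of the lower determinantal ideals for $\lambda=4,31,2^2,21^2,1^4$ reduces those bases to univariate ideals in the remaining free parameter, and checking which specialisations keep all five ideals nonzero singles out $[1,x_2,-x_2,x_2,-x_2,0]$ with $x_2\ne-1$.

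Next I would treat the five remaining subcases, each having $x_6\ne 0$ and one of $x_1,\dots,x_5$ set to zero. In each case I expect the Gr\"obner basis of $\sqrt{\Sigma+}$ to contain, as in the earlier low-parameter analyses, generators that force at least one of the parameters hypothesised to be nonzero to vanish (compare, e.g., the generators $x_4,x_2x_5,x_6(x_6\pm 1)$ that arose in the two- and three-parameter cases). Each such contradiction rules out the entire subcase. Writing out these Gr\"obner bases, performing the elementary case splits, and verifying that no candidate escapes into the complement of every lower ideal completes the enumeration.

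The main obstacle is computational rather than conceptual: the individual upper ideals $DI^+_\lambda$ for five nonzero parameters have Gr\"obner bases with hundreds of terms and integer coefficients of absolute value on the order of $10^{15}$, so the feasibility of the argument hinges on the empirical observation, already exploited in the four-parameter case, that taking the sum of these ideals and passing to the radical produces dramatic cancellation. Getting the intermediate expression swell under control requires a careful choice of monomial order (\texttt{grevlex} throughout) and a computer algebra system able to handle the radical computation; once $\sqrt{\Sigma+}$ is available in the simple form above, the remaining case analysis and the verification against the lower ideals are routine, as they reduce to substitutions into a handful of low-degree univariate polynomials. The full set of reduced matrices $B(\lambda)$ and the corresponding Gr\"obner basis files are collected in the online addendum \cite{BDadd}.
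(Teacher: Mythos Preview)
Your proposal is correct and follows essentially the same route as the paper: the paper likewise treats only the subcase $x_6=0$ in detail, obtains the same 13-element radical basis, performs the same two-level split on $x_1-1$ and then $x_2-1$ (respectively $x_2=\pm x_1$), arrives at the same three one-parameter candidate families (with the two isolated points absorbed into them), and then eliminates all but $[1,x_2,-x_2,x_2,-x_2,0]$, $x_2\ne -1$, by substitution into the lower-ideal radicals. For the remaining five subcases with $x_6\ne 0$ the paper simply states that they ``do not produce any new solutions'' and omits details, which matches your expectation.
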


\begin{remark}\label{rem:5paramexceptional}
	The exceptional case $x_2 = -1$ gives a relation that is not regular:
	\[
	(a_1a_2)a_3 = a_1(a_2a_3) - a_1(a_3a_2) + a_2(a_1a_3) - a_2(a_3a_1) + a_3(a_1a_2).
	\]
	In this case, the cubic relation matrix $M$ has nullity 32 and multiplicities [2,4,2,4,2]; that is, the
	nullspace is isomorphic to the $S_4$-module $2[4] \oplus 4[31] \oplus 2[2^2] \oplus 4[21^2] \oplus 2[1^4]$.
\end{remark}

%%%%%%%%%%%%%%%%%%%%%%%%%%%%%%%%%%%%%%%%%%%%%%%%%%%%%%%%%%%%%%%%%%%%%%%%%%%%%%%%%%%%%%%%%%%%%%%%%%%%%%%%%%%% 108

\subsection{Six nonzero parameters}

There is only one case (all parameters are free), and we may assume that each parameter is nonzero, since
if any parameter is zero then we return to one of the cases already considered.

\subsubsection*{Upper ideals}

The sum $\Sigma+$ of the upper ideals has a \texttt{grevlex} Gr\"obner basis consisting of
83 elements, degrees 3 to 5, terms 2 to 117, and coefficients $-1642727092$ to $1636813156$.
There are 5 elements of degree 3, 62 of degree 4, and 16 of degree 5.
Exactly two elements (numbers 2 and 5) have a parameter as an irreducible factor,
in both cases $x_6$:
\[
g_2 = -x_6(x_1x_4-x_2x_3), \qquad\qquad g_5 = x_6(x_1^2-x_2^2-x_3^2+x_4^2).
\]
Since $x_6 \ne 0$ by assumption, we may divide both $g_2$ and $g_5$ by $x_6$ and replace them in the Gr\"obner basis by
\[
g'_2 = -x_1x_4+x_2x_3, \qquad\qquad g'_5 = x_1^2-x_2^2-x_3^2+x_4^2.
\]
We recompute the Gr\"obner basis and obtain 65 elements with degrees 2 to 5, terms 2 to 91, and coefficients
$-3024000276$ to $2254275346$.
There are 2 elements of degree 2, 1 of degree 3, 49 of degree 4, and 13 of degree 5.
The first two elements of this new basis are $g'_2$ and $g'_5$.
We call this the \emph{simplified upper basis}.

Since $x_1 \ne 0$ by assumption, we solve for $x_4$ in $g'_2 = 0$ and obtain $x_4 = x_2x_3/x_1$.
We substitute this in $g'_5$ and factor the result, obtaining
\[
\frac{(x_1-x_2)(x_1+x_2)(x_1-x_3)(x_1+x_3)}{x_1^2} .
\]
For every solution, this must vanish, so we may split the computation
of the zero set of the simplified upper basis into four cases:
\[
x_2 = x_1, \qquad x_2 = -x_1, \qquad x_3 = x_1, \qquad x_3 = -x_1.
\]
Making these substitutions into $g'_2$ we obtain
\[
x_1(x_3-x_4), \qquad -x_1(x_3+x_4), \qquad x_1(x_2-x_4), \qquad -x_1(x_2+x_4).
\]
Since $x_1 \ne 0$, in each case the other factor is 0, and so the four cases are defined as follows:
\begin{equation}
\label{fourcases}
\begin{array}{c@{\qquad}|l@{\qquad}l@{\qquad}|l@{\qquad}}
\text{case} &\multicolumn{2}{l|}{\text{substitutions}} & \text{relation coefficients} \\
1 & x_2 =  x_1, & x_4 =  x_3 & [ \, x_1, \,  x_1, \,  x_3, \,  x_3, \, x_5, \, x_6 \, ] \\
2 & x_2 = -x_1, & x_4 = -x_3 & [ \, x_1, \, -x_1, \,  x_3, \, -x_3, \, x_5, \, x_6 \, ] \\
3 & x_3 =  x_1, & x_4 =  x_2 & [ \, x_1, \,  x_2, \,  x_1, \,  x_2, \, x_5, \, x_6 \, ] \\
4 & x_3 = -x_1, & x_4 = -x_2 & [ \, x_1, \,  x_2, \, -x_1, \, -x_2, \, x_5, \, x_6 \, ]
\end{array}
\end{equation}
In this way we reduce the original problem with 6 free parameters to four much smaller problems each with
4 free parameters.

For each of these cases, we make the corresponding substitutions into the simplified upper basis,
and recompute the Gr\"obner basis.
We then repeatedly cancel irreducible factors in basis elements which are parameters, and recompute the Gr\"obner basis.

All these tricks seem necessary to be able to compute a Gr\"obner basis for the radical of the
sum of the upper ideals in a reasonable time. We obtain the following results:

\smallskip

\noindent \emph{Case 1}. The original basis of 65 elements reduces to 26, 21, 12 elements after cancelling $x_1$ five times;
the resulting basis has 2, 4, 4, 2 elements of degrees 2, 3, 4, 5 respectively, terms from 9 to 34, and
coefficients from $-249$ to $211$.
The radical of this ideal has the following Gr\"obner basis:
\begin{gather*}
x_6+x_5-x_3-x_1+1, \quad (x_5-x_3)(2x_5-x_3-x_1+2),\\
2x_3^2x_1-2x_1^3+x_5x_3-x_3^2-x_5x_1+3x_3x_1+4x_1^2-2x_1,\\
6x_5x_3x_1+6x_5x_1^2-6x_3x_1^2-6x_1^3+x_5x_3-x_3^2-5x_5x_1+11x_3x_1+12x_1^2-6x_1,
\\
2x_3^3-2x_3x_1^2-x_5x_3+3x_3^2+x_5x_1+3x_3x_1-2x_3,\\
2x_5x_3^2-2x_5x_1^2-x_5x_3+3x_3^2+x_5x_1+3x_3x_1-2x_3.
\end{gather*}
The zero set of this radical ideal, excluding solutions in which any parameter is zero, and using the
equations $x_2 = x_1$ and $x_4 = x_3$, consists of the point $\big[ \tfrac13, \tfrac13, \tfrac13, \tfrac13, -\tfrac23, \tfrac13\big]$ and the family
\begin{equation}
\label{case1}
\Big[ x_1, x_1, x_3, x_3, x_3, x_1-1 \Big] \; \text{where} \; x_3^2+x_3-(x_1-1)^2=0 \; \text{and} \; x_1 \ne 0,1
\end{equation}

\noindent \emph{Case 2}. The original basis of 65 elements reduces to 29 elements;
the resulting basis has 25, 4 elements of degrees 4, 5 respectively, terms from 20 to 32, and
coefficients from $-2509$ to $5018$.
The radical has this Gr\"obner basis:
\begin{gather*}
(3x_1-1)(x_3-x_1),\;\;
3x_6x_1-3x_5x_1+2x_3+x_1,\;\;
(x_3-x_1)(x_3+x_1),
\\
(x_5-1)(x_3+x_1),\;\;
3x_6x_3+3x_5x_1-2x_3-x_1,\;\;
3x_5x_6-x_1+x_3,
\\
x_1(x_5-1)(3x_1-1),\;
9x_5^2x_1+3x_5x_1-5x_3-7x_1,\;
9x_6^3+18x_5x_1-9x_6-11x_3-7x_1.
\end{gather*}
The zero set of this radical ideal, excluding solutions in which any parameter is zero, and using the
equations $x_2 = -x_1$ and $x_4 = -x_3$, is as follows:
\begin{equation}
\label{case2}
[x_1,-x_1,x_3,-x_3,x_5,x_6]
\; = \;
\Big[ \tfrac13, -\tfrac13, -\tfrac13, \tfrac13, -\tfrac23, -\tfrac13 \Big], \;
\Big[ \tfrac13, -\tfrac13, -\tfrac13, \tfrac13,  \tfrac13,  \tfrac23 \Big]
\end{equation}

\noindent \emph{Case 3}: The results are very similar to those of case 2.
The radical of the ideal has the following Gr\"obner basis:
\begin{gather*}
 (3x_1-1)(x_2+x_1),\;\;
 3x_6x_1+3x_5x_1+2x_2-x_1,\;\;
 (x_2-x_1)(x_2+x_1),
 \\
 (x_5-1)(x_2-x_1),\;\;
 3x_6x_2+3x_5x_1+2x_2-x_1,\;\;
 3x_5x_6+x_1+x_2,
 \\
 x_1(x_5-1)(3x_1-1),\; 
 9x_5^2x_1+3x_5x_1+5x_2-7x_1,\; 
 9x_6^3-18x_5x_1-9x_6-11x_2+7x_1.
\end{gather*}
The zero set of this radical ideal, excluding solutions in which any parameter is zero, and using the
equations $x_3 = x_1$ and $x_4 = x_2$, is as follows:
\begin{equation}
\label{case3}
[x_1,x_2,x_1,x_2,x_5,x_6]
\; = \;
\Big[ \tfrac13, \tfrac13, \tfrac13, \tfrac13, -\tfrac23,  \tfrac13 \Big], \;
\Big[ \tfrac13, \tfrac13, \tfrac13, \tfrac13,  \tfrac13, -\tfrac23 \Big]
\end{equation}

\noindent \emph{Case 4}. 
The results are very similar to those of case 1.
The radical of the ideal has the following Gr\"obner basis:
\begin{gather*}
x_6-x_5-x_2+x_1-1,
\quad
(x_5+x_2)(2x_5+x_2-x_1+2),\\
2x_2^2x_1-2x_1^3-x_5x_2-x_2^2-x_5x_1-3x_2x_1+4x_1^2-2x_1,
\\
6x_5x_2x_1-6x_5x_1^2-6x_2x_1^2+6x_1^3+x_5x_2+x_2^2+5x_5x_1+11x_2x_1-12x_1^2+6x_1,
\\
2x_2^3-2x_2x_1^2-x_5x_2-3x_2^2-x_5x_1+3x_2x_1-2x_2,\\
2x_5x_2^2-2x_5x_1^2+x_5x_2+3x_2^2+x_5x_1-3x_2x_1+2x_2.
\end{gather*}
The zero set of this radical ideal, excluding solutions in which any parameter is zero, and using the
equations $x_3 = -x_1$ and $x_4 = -x_2$, consists of the point $\big[\tfrac13, -\tfrac13, -\tfrac13, \tfrac13, -\tfrac23, -\tfrac13\big]$ and the family
\begin{equation}
\label{case4}
\Big[ x_1, x_2, -x_1, -x_2, -x_2, 1-x_1 \Big], \;\; x_2^2-x_2-(x_1-1)^2=0, \;\; x_1 \ne 0,1
\end{equation}
To decide which (if any) of the solutions we found are regular,
we must compute Gr\"obner bases for the radicals of the lower determinantal ideals of the matrices
$B(\lambda)$, and then substitute the solutions into the Gr\"obner bases.

\subsubsection*{Lower ideals}

Here are the Gr\"obner bases for the radicals of the lower determinantal ideals:
\begin{alignat*}{1}
\lambda= 4\colon 
&
\quad
x_6+x_5-1, \quad
(x_4+x_2+1)(x_2+x_1), \quad
(x_2+x_1)(2x_5-x_3+x_2-2x_1),
\\
&\quad
x_4x_3+x_3^2+x_4x_1+x_3x_1-x_2-x_1, \\
&\quad
x_4^2-x_3^2+x_3x_2-x_2^2-2x_4x_1-x_3x_1-x_2x_1+x_4+x_3,
\\
&\quad
2x_5x_4+2x_5x_3+x_3x_2-x_2^2-2x_4x_1-x_3x_1-x_2x_1-2x_2-2x_1,
\\
&
\quad
(x_2+x_1)(x_3^2-x_3x_2+x_3x_1-x_2x_1-x_3-x_2-2x_1)
\\
\lambda= 31\colon
&
\quad
x_4+x_3+x_2+x_1, \quad
x_6x_1+x_5x_1-x_3, \quad
(x_2-x_1)(x_2+x_1), \\
&\quad
(x_3+x_1)(x_2+x_1), \quad
(x_5+1)(x_2+x_1), \quad
x_6x_2-x_5x_1+x_3, \\
&
\quad
(x_3-x_1)(x_3+x_1), \quad
(x_5-1)(x_3+x_1), \quad 
x_6x_3-x_5x_1+x_3, \\
&\quad
2x_5^2-4x_5x_1-x_3x_1-x_2x_1-2x_6+3x_3-x_2-2,
\\
&\quad
2x_6x_5+4x_5x_1+x_3x_1+x_2x_1-3x_3+x_2, \\
&\quad
2x_6^2-4x_5x_1-x_3x_1-x_2x_1+2x_6+3x_3-x_2\\
\lambda= 2^2\colon 
&
\quad
x_3-x_2, \qquad
x_4-x_1, \qquad
(x_2-x_1)(x_2+x_1), \quad
x_5x_2-x_6x_1-x_2,\\
& \quad
x_6x_2-x_5x_1+x_1,\quad
(x_6+x_5-1)(x_6-x_5+1), \\
&\quad 
(x_5-x_1-1)(x_5-x_1)(x_5+2x_1-1),
\\
&\quad
x_6x_5^2-3x_6x_1^2+2x_2x_1^2-x_6x_5+x_6x_1-2x_2x_1
\\
\lambda= 21^2\colon 
&
\quad
x_4-x_3-x_2+x_1, \quad
x_6x_1-x_5x_1-x_2, \quad
(x_2-x_1)(x_2+x_1), \\
&\quad
(x_3-x_1)(x_2-x_1),\quad
(x_5-1)(x_2-x_1), \quad
x_6x_2-x_5x_1-x_2, \\
&\quad
(x_3-x_1)(x_3+x_1), \quad
(x_5+1)(x_3-x_1), \quad
x_6x_3-x_5x_1-x_2, \\
&\quad
2x_5^2-4x_5x_1+x_3x_1+x_2x_1+2x_6+x_3-3x_2-2,\\
&\quad
2x_6x_5-4x_5x_1+x_3x_1+x_2x_1+x_3-3x_2, \\
&\quad
2x_6^2-4x_5x_1+x_3x_1+x_2x_1-2x_6+x_3-3x_2\\
\lambda = 1^4\colon
&
\quad
x_6-x_5+1, \qquad
x_4x_2-x_2^2-x_4x_1+x_2x_1-x_3+x_1, \\
&\quad
(x_4-x_3+1)(x_3-x_1),\quad
(x_3-x_1)(2x_5-x_3+x_2-2x_1),\\
& \quad
x_4^2-x_3^2+x_3x_2-x_2^2-2x_4x_1+x_3x_1+x_2x_1+x_4-x_2,
\\
&
\quad
2x_5x_4-x_3^2-2x_5x_2+x_3x_2-2x_4x_1+x_3x_1+x_2x_1+2x_3-2x_1,
\\
&
\quad
(x_3-x_1)(x_3x_2-x_2^2-x_3x_1+x_2x_1-x_3-x_2+2x_1)
\end{alignat*}

\subsubsection*{Comparison of upper and lower ideals}

Finally, we need to check if some of the parameter values \eqref{case1}--\eqref{case4} are common zeros for at least one of the Gr\"obner bases for the lower ideals.
For example, substituting the solution $\big[ \tfrac13, \tfrac13, \tfrac13, \tfrac13, -\tfrac23, \tfrac13\big]$ into the elements of the five
Gr\"obner bases produces the following lists of scalars:
\[
\begin{array}{ll}
\lambda = 4 &\quad  -\frac43, \; \frac{10}9, \; -\frac43, \; -\frac29, \; \frac29, \; -\frac83, \; -\frac89 \\[1mm]
\lambda = 31 &\quad  \frac43, \; -\frac49, \; 0, \; \frac49, \; \frac29, \; \frac23, \; 0, \; -\frac{10}9, \; \frac23, \; -\frac49, \; -\frac{16}9, \; \frac{20}9  \\[1mm]
\lambda = 2^2 &\quad  0, \; 0, \; 0, \; -\frac23, \; \frac23, \; -\frac83, \; -2, \; \frac29 \\[1mm]
\lambda = 21^2 &\quad 0, \; 0, \; 0, \; 0, \; 0, \; 0, \; 0, \; 0, \; 0, \; 0, \; 0, \; 0 \\[1mm]
\lambda = 1^4 &\quad  2, \; 0, \; 0, \; 0, \; 0, \; 0, \; 0 
\end{array}
\]
The five ideals are therefore
$( 1 )$, $( 1 )$, $( 1 )$, $( 0 )$, $( 1 )$
and so regularity fails since the fourth ideal is zero.
Similar calculations eliminate the other isolated points, and so it remains to check only the
one-parameter solutions \eqref{case1} and \eqref{case4}:
\begin{equation}
\label{last2}
\begin{array}{lrl}
\big[ \; x_1, \; x_1, \; x_3, \; x_3, \; x_3, \; x_1-1 \; \big] &\qquad x_3^2+x_3-(x_1-1)^2=0, \qquad x_1 \ne 0, 1
\\[2pt]
\big[ \; x_1, \; x_2, \; -x_1, \; -x_2, \; -x_2, \; 1-x_1 \; \big] &\qquad x_2^2-x_2-(x_1-1)^2=0, \qquad x_1 \ne 0, 1
\end{array}
\end{equation}
Each of these solutions is a sextuple depending on two parameters subject to one equation.  We substitute these solutions into the elements of the Gr\"obner bases of the radicals of the lower ideals, adjoin the equation relating the parameters to each of the five Gr\"obner bases, and solve the corresponding systems of equations. The union of all those solutions is precisely the set of values of parameters we must exclude.

An example will make this clear. Consider the first solution from \eqref{last2}. We substitute these values into the Gr\"obner basis for the radical of the lower determinantal ideal for $\lambda = 2^2$. The eight generators of that ideal become
\begin{gather*}
0,x_3-x_1, -x_1(x_1-x_3), x_1(x_1-x_3), (x_1-2+x_3)(x_1-x_3), \\ -( x_1-1)( x_1-1+x_3)( x_1-x_3), ( x_3+2x_1-1)( -x_3+1+x_1)(x_1-x_3),
\end{gather*}
and we see that all these are equal to zero only when $x_1=x_3$.
Taking into account the equation $x_3^2+x_3-(x_1-1)^2=0$, we see that $x_1=x_3=\frac13$.
Doing these for all determinantal ideals, we find that the points that have to be removed from the first family are $(1,-1)$ and $(\frac13,\frac13)$, and from the second family, $(1,1)$ and $(\frac13,-\frac13)$.
(In fact, the first point in each pair has already been removed, since we assume $x_1\ne1$.)

The formulation of the result becomes a little bit more elegant if we replace $x_3$ by $-x_3$ in the first family.

\begin{proposition}[Summary for six nonzero parameters]\label{prop:6param}
	When all parameters in Relation \eqref{LRrelation} are nonzero,  there are two one-dimensional families of solutions which are regular in arity $4$:
\begin{align*}
	(a_1a_2)a_3=& \; x_1\big[a_1(a_2a_3) + a_1(a_3a_2)  + a_3(a_2a_1)\big] %\\   &\; 
-x_3 \big[ a_2(a_1a_3) + a_2(a_3a_1) + a_3(a_1a_2)\big] - a_3(a_2a_1),\\
	(a_1a_2)a_3=&\; x_1\big[a_1(a_2a_3) - a_2(a_1a_3)  - a_3(a_2a_1)\big] %\\  &\;
 +x_2 \big[ a_1(a_3a_2)- a_2(a_3a_1) - a_3(a_1a_2)\big] + a_3(a_2a_1),
\end{align*}	
	where both $(x_1,x_2)$ and $(x_1,x_3)$ belong to the hyperbola $y^2-y-(x-1)^2=0$ with five excluded points: $(1,0)$, $(1,1)$, $(\frac13,-\frac13)$, and $(0,\phi)$ for both roots $\phi$ of the polynomial $x^2-x-1$.
\end{proposition}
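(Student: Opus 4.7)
The plan is to adapt the approach used in the previous propositions (Propositions \ref{prop:1param}--\ref{prop:5param}) to the full six-parameter case. First I would compute, using representation theory, the Gr\"obner basis for the sum $\Sigma+$ of the five upper determinantal ideals $DI_{4d_\lambda+1}(R_\lambda(\calG))$, which according to the excerpt has 83 elements in \texttt{grevlex} order. The key observation is that exactly two of these generators have $x_6$ as an irreducible factor:
\[
g_2 = -x_6(x_1x_4 - x_2x_3), \qquad g_5 = x_6(x_1^2 - x_2^2 - x_3^2 + x_4^2).
\]
Since we assume $x_6 \ne 0$, we may replace these by $g'_2 = -x_1 x_4 + x_2 x_3$ and $g'_5 = x_1^2 - x_2^2 - x_3^2 + x_4^2$ and recompute; this \emph{simplified upper basis} has 65 elements, and its first two members are $g'_2$ and $g'_5$.

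The next step is a divide-and-conquer reduction. Since $x_1 \ne 0$, we can solve $g'_2 = 0$ for $x_4 = x_2 x_3 / x_1$ and substitute into $g'_5$; the result factors as $(x_1 - x_2)(x_1 + x_2)(x_1 - x_3)(x_1 + x_3)/x_1^2$. So any solution must lie in one of the four subvarieties $x_2 = \pm x_1$ or $x_3 = \pm x_1$; combined with $g'_2 = 0$ and $x_1 \ne 0$, each branch determines a pair of substitutions, giving the four cases tabulated in \eqref{fourcases}, each depending on four free parameters.

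For each of the four cases, I would substitute the pair of equations into the simplified upper basis and compute the Gr\"obner basis of the radical of the resulting ideal, cancelling any scalar factors of $x_i$ that appear (permissible since each $x_i$ is nonzero by hypothesis) and repeating until stable. This is where the main computational obstacle lies: the intermediate Gr\"obner bases are still large with integer coefficients on the order of $10^9$, and only the repeated cancellation and recomputation tricks keep the calculation tractable. The radical Gr\"obner bases turn out to be quite small (roughly 6--9 generators), and solving the corresponding zero-dimensional or one-dimensional systems (subject to each $x_i \ne 0$) yields in cases 1 and 4 a one-parameter family together with the isolated point $[\tfrac13,\pm\tfrac13,\pm\tfrac13,\pm\tfrac13,-\tfrac23,\pm\tfrac13]$, and in cases 2 and 3 two isolated points.

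Finally, I would verify regularity by substituting each candidate solution into the Gr\"obner bases for the radicals of the five lower determinantal ideals $DI_{4d_\lambda}(R_\lambda(\calG))$; regularity holds at a given point iff \emph{every} lower ideal evaluates to a nonzero ideal there (by Lemma \ref{propertyRrankconditions}). A direct evaluation shows that all the isolated points (including the $\tfrac13$-points) fail regularity because at least one lower ideal becomes zero (e.g.\ for $[\tfrac13,\tfrac13,\tfrac13,\tfrac13,-\tfrac23,\tfrac13]$ the $\lambda = 21^2$ ideal vanishes). For the one-parameter families \eqref{case1} and \eqref{case4}, substituting and intersecting with the defining conic equation reveals finitely many excluded parameter values; carrying out this check and making the cosmetic substitution $x_3 \mapsto -x_3$ in the first family brings both families to the symmetric form stated in the proposition, with excluded points $(1,0)$, $(1,1)$, $(\tfrac13,-\tfrac13)$, and the two $(0,\phi)$ for $\phi^2 - \phi - 1 = 0$ (the last two being precisely where the family crosses the four-parameter solutions of Proposition \ref{prop:4param}, which are separately regular).
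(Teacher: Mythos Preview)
Your proposal is correct and follows essentially the same approach as the paper's proof: the same simplification via $g_2,g_5$, the same four-case split from the factorization of $g'_5$ after eliminating $x_4$, the same radical computations in each case, and the same verification against the lower ideals to discard the isolated points and determine the excluded parameter values on the two conics. The only cosmetic difference is that the paper spells out the intermediate Gr\"obner bases of the four radicals and the five lower radicals explicitly, whereas you summarize their shape; your description of which excluded points arise from vanishing parameters (hence belonging to earlier propositions) versus from a lower ideal collapsing is also accurate.
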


\subsection{Statement of the main technical result}

After noticing that the excluded points $(0,\phi)$ in the last statement are precisely the points with four nonzero parameters that we found previously, and the excluded point $(1,0)$ corresponds to the Zinbiel operad in the first case and to the Leibniz operad in the second case,  we see that Propositions \ref{prop:1param}--\ref{prop:6param} lead to the following conclusion.

\begin{theorem}\label{th:classification}
The parametrized one-relation operads with the regular module in arity $4$ are precisely the operads from the following list:
\begin{itemize}
\item[(i)] $(a_1a_2)a_3 = s\, a_3(a_1a_2)$,
\item[(ii)] $(a_1a_2)a_3=a_1(a_2a_3) + s\big[a_1(a_3a_2) - a_2(a_1a_3) + a_2(a_3a_1) -a_3(a_1a_2)\big]$,
\item[(iii)] $(a_1a_2)a_3= u\big[a_1(a_2a_3) + a_1(a_3a_2)  + a_3(a_2a_1)\big]  -v \big[ a_2(a_1a_3) + a_2(a_3a_1) + a_3(a_1a_2)\big] - a_3(a_2a_1)$,
\item[(iv)] $(a_1a_2)a_3=u\big[a_1(a_2a_3) - a_2(a_1a_3)  - a_3(a_2a_1)\big] +v \big[ a_1(a_3a_2)- a_2(a_3a_1) - a_3(a_1a_2)\big] + a_3(a_2a_1)$,	
\end{itemize}
where in the case (i) we require $s\ne\pm1$, in the case (ii) we require $s\ne -1$, and in both cases (iii) and (iv) the point $(u,v)$ belongs to the hyperbola $y^2-y-(x-1)^2=0$ with the points $(1,1)$ and $(\frac13,-\frac13)$ excluded.
\end{theorem}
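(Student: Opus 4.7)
The plan is to prove Theorem~\ref{th:classification} by a case analysis stratified by the number of nonzero parameters among $x_1,\dots,x_6$ in Relation~\eqref{LRrelation}, relying throughout on the representation-theoretic reduction described in Section~\ref{representationsection}. For each fixed pattern of zero/nonzero parameters, one works with the five reduced matrices $B(\lambda)$, $\lambda \vdash 4$, obtained from the representation matrices $R_\lambda(\calG)$ via Algorithm~\ref{partialsmithform}, and applies Lemma~\ref{reducedpropertyRconditions}: the parametrized one-relation operad is regular in arity $4$ precisely when every upper determinantal ideal $DI^+_\lambda$ vanishes on the chosen parameters while no lower determinantal ideal $DI^-_\lambda$ does. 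The advantage of the representation-theoretic viewpoint is that these matrices are at most $5\times5$ apart from the $\lambda=31$ and $\lambda=21^2$ blocks, which are $5\times5$ after reduction, so Gr\"obner basis computations remain tractable.

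The concrete workflow for each stratum is: (a) form the sum $\Sigma+ = \sum_\lambda DI^+_\lambda$ of the upper ideals; (b) compute a Gr\"obner basis of $\sqrt{\Sigma +}$, since only the zero set matters, and typically $\sqrt{\Sigma +}$ is much smaller than $\Sigma+$; (c) solve the resulting polynomial system to enumerate the parameter points making the arity~$4$ nullity at least $24$; (d) for each such point or family, substitute into the Gr\"obner bases of the five $\sqrt{DI^-_\lambda}$ and retain only those parameters at which \emph{no} lower ideal collapses, so that the nullity equals exactly $24$ and the module structure of the kernel is indeed $\mathbb{F}S_4$. I would organize the computations exactly as in Propositions~\ref{prop:1param}--\ref{prop:6param}: first the six one-parameter cases (easy, each reduces to a univariate question), then the fifteen two-parameter cases (often yielding empty or classical solutions), then three- and four-parameter cases, and finally the five- and six-parameter generic cases. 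Tensoring with the sign module exchanges $x_2\leftrightarrow -x_3$ and conjugates partitions, halving the bookkeeping via Koszul self-dual symmetries (compare Lemma~\ref{duallemma}).

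The main obstacle is the fully generic case where all six parameters are nonzero: here the sum $\Sigma+$ has a Gr\"obner basis with dozens of high-degree elements and huge integer coefficients, and direct radical computation is infeasible. The key idea that unlocks this case is to locate two low-degree generators whose only parameter factor is $x_6$; removing this factor (legitimate under the assumption $x_6\neq 0$) yields the two clean quadrics $x_1x_4-x_2x_3$ and $x_1^2-x_2^2-x_3^2+x_4^2$. Eliminating $x_4 = x_2x_3/x_1$ via the first and substituting into the second produces the factorization $(x_1-x_2)(x_1+x_2)(x_1-x_3)(x_1+x_3)$, which splits the problem into the four sub-cases \eqref{fourcases}. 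Each sub-case involves only four free parameters and a much simpler Gr\"obner basis, whose radical can be computed and whose zero set can be explicitly parametrized, yielding (after further cancellation of nonzero parameters) either isolated points or the two hyperbola families $x_3^2+x_3-(x_1-1)^2=0$ and $x_2^2-x_2-(x_1-1)^2=0$.

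Finally, the solutions from all strata must be assembled and dedicated parameter values cross-checked against the lower determinantal ideals to excise spurious points; in particular, the isolated solutions with four nonzero parameters involving the golden ratio $\phi$ turn out to lie on the two hyperbolas from the six-parameter case, and the Zinbiel and Leibniz solutions reappear as the boundary points where one parameter vanishes. Combining Propositions~\ref{prop:1param}--\ref{prop:6param} and collating the surviving parameter values into the four families (i)--(iv) with their exclusion sets yields Theorem~\ref{th:classification}. The computations involved in steps (b) and (d) of the workflow and the divide-and-conquer of the six-parameter case are the crucial technical content; everything else is a disciplined bookkeeping of sub-cases and a verification that each candidate solution is actually regular rather than merely of the correct nullity.
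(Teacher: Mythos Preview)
Your proposal is correct and follows essentially the same approach as the paper: the stratification by number of nonzero parameters, the use of the reduced blocks $B(\lambda)$ and Lemma~\ref{reducedpropertyRconditions}, the workflow on upper/lower determinantal ideals and their radicals, and in the six-parameter case the extraction of the two quadrics $x_1x_4-x_2x_3$ and $x_1^2-x_2^2-x_3^2+x_4^2$ leading to the four-case split~\eqref{fourcases} all mirror Section~\ref{proofclassificationsection} exactly. The final assembly of Propositions~\ref{prop:1param}--\ref{prop:6param}, including the absorption of the golden-ratio points into the hyperbola families and of Zinbiel/Leibniz as boundary points, is likewise how the paper concludes.
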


\section{Classification theorem}\label{actualclassification}

In this section, we prove the following classification result which is the main result of this paper. 

\begin{theorem}\label{th:isoclassification}\leavevmode
\begin{itemize}
\item[(i)] Over any field~$\mathbb{F}$ of characteristic $0$, each regular parametrized one-relation operad is one of the operads of Theorem \ref{th:classification}.
\item[(ii)] Over an algebraically closed field $\mathbb{F}$ of characteristic $0$, every regular parametrized one-relation operad
is isomorphic to one of the following five operads: the left-nilpotent operad defined by the identity
$((a_1a_2)a_3)=0$, the associative operad, the Leibniz operad $\Leib$, the dual Leibniz (Zinbiel) operad
$\Zinb$, and the Poisson operad.
\end{itemize}
\end{theorem}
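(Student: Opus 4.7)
\emph{Part (i)} is immediate from Theorem~\ref{th:classification}: regularity in all arities implies in particular that $\calO_\mathbf{x}(4) \cong \mathbb{F}S_4$, so $\mathbf{x}$ is an $\mathbb{F}$-point of one of the four families (i)--(iv) of Theorem~\ref{th:classification}, and no hypothesis beyond characteristic zero is used.

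For \emph{part (ii)}, the key observation is that two parametrized one-relation operads are isomorphic if and only if their generators differ by an $S_2$-equivariant automorphism of $\calT(2) \cong \mathbb{F}S_2$. In characteristic zero $\mathbb{F}S_2 = \mathbb{F}e_+ \oplus \mathbb{F}e_-$ with idempotents $e_\pm = \tfrac12(e \pm (12))$, so such an automorphism is multiplication by an invertible element of $(\mathbb{F}S_2)^\times \cong \mathbb{F}^\times \times \mathbb{F}^\times$; equivalently it is a change of generator $\mu \mapsto \nu = \alpha\mu + \beta\mu^{\mathrm{op}}$ with $\alpha^2 \ne \beta^2$. Modulo an overall rescaling, which does not affect the relation, this torus acts as a one-parameter family on the parameter space $\mathbb{F}^6$, and I will show that each family (i)--(iv) is contained in a single such orbit that passes through one of the five named operads.

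A direct calculation shows that in family (i) the new generator $\nu = \alpha\mu + \beta\mu^{\mathrm{op}}$ satisfies
\[
\nu(\nu(a_1,a_2), a_3) = (\alpha s + \beta)\, a_3\, \nu(a_1, a_2),
\]
so the choice $\beta = -\alpha s$ (e.g.\ $\nu = \mu - s\mu^{\mathrm{op}}$) yields the left-nilpotent relation $\nu\circ_1\nu = 0$; the invertibility condition $\alpha^2 \ne \beta^2$ becomes $s^2 \ne 1$, matching the hypothesis. Family (ii) is precisely the Markl--Remm deformation \cite{MR1} interpolating between associative ($s=0$) and Poisson ($s=1/3$); they show that over an algebraically closed field every $s \ne 0$ is isomorphic to Poisson via a change of generator of the above form whose parameters satisfy a quadratic equation in $s$. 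Families (iii) and (iv) are parametrized by the affine conic $y^2 - y = (x-1)^2$; one computes the orbit under $(\mathbb{F}S_2)^\times$ of the Zinbiel parameter vector $(1,1,0,0,0,0)$ (respectively the Leibniz vector $(1,0,-1,0,0,0)$) and checks that, after the excluded points (which are degenerations to previously classified lower-parameter families) are removed, this orbit sweeps out the entire conic.

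To finish, regularity in all arities for the five named operads is classical for the associative, Leibniz, Zinbiel, and Poisson operads (see \cite{LV}), and for the left-nilpotent operad it follows because the relation $(a_1a_2)a_3=0$ kills every non-right-combed monomial, so $\calO(n) \cong \mathbb{F}S_n$. Pairwise non-isomorphism of the five operads has already been outlined in the introduction. The \emph{main technical difficulty} is the explicit orbit computations for families (iii) and (iv): matching a generic point of the conic with the Zinbiel or Leibniz orbit requires solving a quadratic equation in the torus parameters, which is precisely why part (ii) needs the field to be algebraically closed (or more weakly to satisfy $\mathbb{F}^\times = (\mathbb{F}^\times)^2$), whereas part (i) does not.
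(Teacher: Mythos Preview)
Your approach is essentially the paper's own: act on each family by the one-parameter group $\phi_t$ of $S_2$-equivariant changes of generator $\mu\mapsto \mu + t\,\mu^{\mathrm{op}}$, and show that every point lies in the orbit of one of the five named operads. However, your treatment of family~(ii) contains a factual error. The Poisson point $s=\tfrac13$ is the \emph{fixed point} of the $\phi_t$-action on this family, not a generic orbit representative: the Livernet--Loday/Markl--Remm result (and the paper's own resultant calculation) shows that every $s\ne\tfrac13$ is isomorphic to the \emph{associative} operad, while $s=\tfrac13$ alone gives Poisson. Your statement ``every $s\ne 0$ is isomorphic to Poisson'' would force Poisson and associative to be isomorphic (take $s=\tfrac12$, say), which is false.

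Your displayed identity for family~(i) is also not quite right as written: the symbol $a_3\,\nu(a_1,a_2)$ on the right is the \emph{old} product $\mu$ applied to $a_3$ and $\nu(a_1,a_2)$, not the new product $\nu$, and a normalising denominator is missing. The correct conclusion is that the new parameter is $\tilde s=(s+t)/(1+st)$ (up to the sign convention for $t$), which does vanish at $t=-s$ with $t^2\ne 1$ since $s\ne\pm 1$, so the conclusion survives.

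For families (iii) and (iv) the paper makes your ``one computes and checks'' precise via resultants: it writes the numerator of the transformed parameter $\tilde v$ as a quadratic in $t$, computes its resultant against the irreducible factors of $\det W(t)$, and verifies that this resultant has no common zeros with the hyperbola $v^2-v-(u-1)^2=0$ apart from the already-excluded points $(1,1)$ and $(\tfrac13,-\tfrac13)$. Hence for every admissible $(u,v)$ there exists $t$ with $\det W(t)\ne 0$ and $\tilde v=0$, forcing $\tilde u=1$. This step is exactly where one must solve a quadratic in $t$, confirming your remark that $\mathbb{F}^\times=(\mathbb{F}^\times)^2$ suffices in place of full algebraic closure.
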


\begin{proof}
We shall go through the list of Theorem \ref{th:classification} and establish that each of the operads which have the regular module in arity $4$ is in fact regular and isomorphic to one of the five operads listed above; we will use the following observation. For each $t\in\mathbb{F}$, one has the following endomorphism $\phi_t$ of the space of generators $\calT(2)$ of the free operad:
	\[
	\phi_t(a_1a_2)=a_1a_2+t a_2a_1,
	\qquad
	\phi_t(a_2a_1)=a_2a_1+t a_1a_2.
	\]
	This endomorphism commutes with the symmetric group action, and is invertible if and only if $t\ne\pm1$.
	(This change of basis was studied by Livernet and Loday in the context of relating the Poisson operad to
	the associative operad \cite{LL,MR1}.  See also a similar change of basis in the space of operations in
	the work of Albert \cite[\S V]{Al} in the context of power-associative and quasiassociative rings.)
	It extends to a well-defined endomorphism of the free operad $\calT$.
	We can replace Relation \eqref{LRrelation} by its image under this endomorphism, which is one of relations
	of the general type \eqref{relation2}.  Recall from Formula \eqref{rowspace1} that the general space of
	relations is spanned by the rows of the $6\times 12$ matrix $N = [ W \mid X ]$.
	If $\det W \ne 0$, then, according to Lemma \ref{leftminor}, there exists an equivalent relation of the
	type \eqref{LRrelation}.  Overall, this allows us to find, for each regular parametrized one-relation
	operad, a one-parameter family of regular parametrized one-relation operads which are isomorphic to it;
	the set of parameters is precisely the set of all $t$ for which $\det W \ne 0$.
	We now make this outlined strategy more precise.
	
Note that the endomorphism of $\calT(3)$ induced by $\phi_t$ is given by the matrix
	\[
		A(t) =
		\begin{bmatrix}
		1&.&t&.&.&.&.&.&.&.&t&t^2\\
		.&1&.&.&t&.&.&.&t&t^2&.&.\\
		t&.&1&.&.&.&.&.&.&.&t^2&t\\
		.&.&.&1&.&t&t&t^2&.&.&.&.\\
		.&t&.&.&1&.&.&.&t^2&t&.&.\\
		.&.&.&t&.&1&t^2&t&.&.&.&.\\
		.&.&.&t&.&t^2&1&t&.&.&.&.\\
		.&.&.&t^2&.&t&t&1&.&.&.&.\\
		.&t&.&.&t^2&.&.&.&1&t&.&.\\
		.&t^2&.&.&t&.&.&.&t&1&.&.\\
		t&.&t^2&.&.&.&.&.&.&.&1&t\\
		t^2&.&t&.&.&.&.&.&.&.&t&1
		\end{bmatrix}
		\]
(zeros are replaced by dots for readability). This can be established by a direct calculation. For instance,
	\begin{align*}
		\phi_t((a_1a_2)a_3)& \; =((a_1a_2+t\,a_2a_1)a_3+t\,(a_3(a_1a_2+t\,a_2a_1))\\
		&\; =(a_1a_2)a_3+t\,(a_2a_1)a_3+t\,a_3(a_1a_2)+t^2\,a_3(a_2a_1),	
	\end{align*}
		which precisely corresponds to the first column of the matrix $A(t)$.
	
	Suppose that $N_0$ is the $6\times 12$ matrix whose rows form the $S_3$-orbit of some relation of the type \eqref{LRrelation}. The change of basis we introduced amounts to multiplying $N_0$ by $A(t)$ on the right. We let $N(t)=N_0A(t)=[W(t)\mid Y(t)]$, where $W(t)$ and $Y(t)$ are $6\times6$ matrices with entries in $\mathbb{F}[t,x_1,\ldots,x_6]$. The module of quadratic relations generated by the rows of this matrix contains a relation of type \eqref{LRrelation} if and only if $\det W(t)\ne 0$, and that in this case the matrix $\widetilde{N}(t)=W(t)^{-1}N(t)$ encodes that relation.
	
	We are now ready to investigate the isomorphism classes. We start with the parametric family \[ (a_1a_2)a_3=s a_3(a_1a_2) ,
	\quad  s\ne\pm 1 . \] We have $\det W(t)=(1-t)^3(t+1)^3(1-st)^6$. The change of basis given by $A(t)$ results in the following change of parametrization:
	\[
	\tilde{s}=\frac{t-s}{st-1}.
	\]
	Clearly, if we put $t = s$, then $\det W(t)\ne 0$, and $\tilde{s}=0$. Therefore, each operad of this family is isomorphic to the left-nilpotent operad.
	
	Next, we consider the parametric family
	\[
	(a_1a_2)a_3=a_1(a_2a_3) + s\big[a_1(a_3a_2) - a_2(a_1a_3) + a_2(a_3a_1) -a_3(a_1a_2)\big] ,
	\]
	where $s\ne -1$. We have $\det W(t)=(1-t)^5(t+1)^3(3st+t^2+t+1)^2$. The change of basis given by $A(t)$ results in the following change of parametrization:
	\[
	\tilde{s}=\frac{st^2 - st + s + t}{3st + t^2 + t + 1}.
	\]
	The resultant with respect to $t$ of  the product of irreducible factors of $\det W(t)$ and the numerator of $\tilde{s}$ is, as one can check by an immediate computation, equal to
	$
	(s+1)^3(3s-1)^3.
	$	Therefore, for each point $s\ne-1,\frac13$, it is possible find a value of $t$ for which $\det W(t)\ne0$, and $\tilde{s}=0$. For such $t$, we see that there is a change of basis that makes $\tilde{s}=0$, so each operad of this family except for the operad for $s=\frac13$ is isomorphic to the associative operad. The operad for $s=\frac13$ is a fixed point for all changes of basis; it is the one-operation presentation of the operad of Poisson algebras \cite{LL,MR1}.
	
	Finally, we consider the parametric families
	\begin{align*}
	(a_1a_2)a_3=&\; u\big[a_1(a_2a_3) + a_1(a_3a_2)  + a_3(a_2a_1)\big] %\\	&\; 
-v \big[ a_2(a_1a_3) + a_2(a_3a_1) + a_3(a_1a_2)\big] - a_3(a_2a_1) ,\\
	(a_1a_2)a_3=&\; u\big[a_1(a_2a_3) - a_2(a_1a_3)  - a_3(a_2a_1)\big] %\\ & \; 
+v \big[ a_1(a_3a_2)- a_2(a_3a_1) - a_3(a_1a_2)\big] + a_3(a_2a_1),
	\end{align*}
	where the parameters $u$ and $v$ are related by the equation $v^2-v-(u-1)^2=0$, and $(u,v)\ne(1,1), (\frac13,-\frac13)$. We have $\det W(t)=(1-t)^3(t+1)^5(ut + vt - t + 1)^3(1+t-3ut + 3vt)$. The change of basis given by $A(t)$ is the following change of parametrization:
	\begin{align*}
	\tilde{u}= \frac{2u^2t^2 + u^2t - ut^2 - u - 2v^2t^2 - v^2t - 2vt}{3u^2t^2 - 4ut^2 + 2ut - 3v^2t^2 + 2vt^2 - 4vt + t^2 - 1} , \\[1mm]
	\tilde{v}=  \frac{u^2t^2 + 2u^2t - 2ut - v^2t^2 -  2v^2t  - vt^2 - v}{3u^2t^2 - 4ut^2 + 2ut - 3v^2t^2 + 2vt^2 - 4vt + t^2 - 1} .
	\end{align*}
	The resultant with respect to $t$ of the product of irreducible factors of $\det W(t)$ and the numerator of $\tilde{v}$ is
	\[
	(u-v)^2(u+v)^2(u+v-2)^2(2u-v-1)^2(3u-3v-2)^2.
	\]
	This polynomial has common roots with $v^2-v-(u-1)^2=0$ if and only if $(u,v)=(1,1)$ or $(u,v)=(\frac13,-\frac13)$, which are precisely the points we excluded. Therefore, for each operad in each of the two families, it is possible find a value of $t$ for which $\det W(t)\ne0$, and $\tilde{v}=0$. For such $t$, we see that there is a change of basis that makes $\tilde{v}=0$, which in turn forces $\tilde{u}=1$. This proves that each operad of the first family is isomorphic to the Zinbiel operad, and each operad of the second family is isomorphic to the Leibniz operad.
\end{proof}

\section{Further directions}
\label{sectionirregular}

\subsection{Further questions about the cubic relation matrix}

It would be interesting to extend the nilpotency result of Section \ref{cubicrelationmatrixsection} and
classify all parametrized one-relation operads which are nilpotent.
There are two somewhat natural questions one may ask here.

\begin{problem}
	\label{determinantMproblem}
	Determine explicitly the factorization of the determinant of the cubic relation matrix $M$ into the product of irreducible polynomials in
	$\mathbb{F}[x_1,\dots,x_6]$. Use this to determine explicitly all parameter values $a_1,\dots,a_6 \in \mathbb{F}$ for which the operad
	$\calO_\mathbf{a}$ is nilpotent of index $3$: these values form the complement $\mathbb{F}^6 \setminus V(\det(M))$.
\end{problem}

\begin{problem}
	For every $d \ge 3$, determine explicitly the set $\calN_d \subseteq \mathbb{F}^6$ of all parameter values
	$a_1,\dots,a_6$ for which the operad $\calO_\mathbf{a}$ is nilpotent of index $d$.
	For these values, we have $\calJ(d) \ne \calT(d)$ and $\calJ(d+1) = \calT(d+1)$.
	We have already seen in Theorem \ref{th:niltheorem} that the set $\calN_3$ is a Zariski open subset of
	$\mathbb{F}^6$.
\end{problem}

We have been able to use representation theory in order to avoid dealing with the determinantal ideals of the cubic relation matrix $M$, or, equivalently, of the block $B$ of its partial Smith normal form. Understanding the structure of those ideals remains an open problem.

\begin{problem}
	For $r = 1$, the reduced Gr\"obner bases for the first determinantal ideal $DI_1(B)$ and its radical were
	presented in Lemma \ref{basicMinfo}.
	For $2 \le r \le 36$, an open problem (probably rather hard, at least computationally) is to determine the reduced Gr\"obner bases for the
	$r$-th determinantal ideal $DI_r(B)$ and its radical.
	For $r = 36$, the determinantal ideal $DI_{36}(B)$ is the principal ideal generated by $\det(B)$, and
	by Algorithm \ref{partialsmithform} we know that $\det(B) = \pm \det(M)$, so this case overlaps with Problem \ref{determinantMproblem}.
\end{problem}

\subsubsection{Rank distribution for relations with small coefficients}\label{sec:rankdistr}

Let us conclude this subsection with some experimental data that sheds some light on the rank distribution
for the cubic relation matrix as a function of the parameter values.
We consider the $729$ relations \eqref{LRrelation} with coefficients in $\{ \, 0, \, \pm 1 \, \}$, and
we partition this set by the number $q$ of nonzero coefficients.
In each case, we substitute the parameter values into $B$ and compute $r = \mathrm{rank}(B)$, recalling that
$\rank(M) = 84 + \rank(B)$.
In the following table, the rows are indexed by $q$ and the columns by $r$.
The $(q,r)$ entry is the number of relations for which $x_1,\dots,x_6 \in \{ \, 0, \, \pm 1 \, \}$
and $\big|\{ \, i \mid x_i \in \{\pm1\} \, \}\big| = q$ and $\mathrm{rank}(B) = r$ where $0 \le q \le 6$ and
$0 \le r \le 36$ (as above, zeros are replaced by dots for readability):
\smallskip
\begin{center}
	$
	\begin{array}
	{c|c
		cccccccccccc
		ccccccc}
	{}_q\backslash^r &
	0 &&&&&& 6 &&&&&& 12 &&&&&& 18&19
	\\
	\midrule
	0 &  . &
	. &  . &  . &  . &  . &  . &  . &  . &  . &  . &  . &  1 &
	. &  . &  . &  . &  . &  . &.\\
	1 &  2 &
	. &  . &  . &  . &  . &  2 &  . &  . &  . &  . &  . &  1 &
	. &  . &  . &  . &  . &  . &.\\		
	2 &  . &
	. &  . &  . &  . &  . &  . &  . &  . &  . &  . &  . &  2 &
	. &  . &  . &  2 &  .&.&.\\		
	3 &  . &
	. &  . &  . &  . &  . &  . &  . &  . &  . &  . &  . &  . &
	. &  2 &  2 &  . &  8 &  3 &.\\		
	4 &  . &
		. &  . &  . &  . &  . &  . &  . &  . &  . &  . &  . &  . &
		. &  . &  . &  . &  . & .&.\\	
   5 &  . &
		. &  . &  . &  2 &  . &  2 &  . &  . &  . &  2 &  . &  3 &
		4 &  . &  . &  . &  . & 10&.\\	
   6 &  . &
		. &  . &  . &  . &  . &  . &  . &  . &  . &  . &  . &  . &
		. &  . &  . &  . &  . &  . &.\\	
	\midrule
	\sum &  2 &
	. &  . &  . &  2 &  . &  4 &  . &  . &  . &  2 &  . &   7 &
	4 &  2 &  2 &  2 &  8 & 13 &.		
	\end{array}$

\medskip 
		
	$\begin{array}
	{c|c
		ccccc
		cccccccccccc}
	{}_q\backslash^r &
	&&&& 24 &&&&&& 30 &&&&&& 36
	\\
	\midrule
    0&   . &  . &  . &  . &  . &
	. &  . &  . &  . &  . &  . &  . &  . &  . &  . &  . &  .
	\\
     1&   . &  . &  . &  . &  6 &
	. &  . &  . &  . &  . &  . &  . &  . &  . &  . &  . &  1
	\\
    2  &   3 &  . &  . &  2 &  5 &
	. &  . &  . &  6 &  . &  8 &  2 &  4 &  2 &  2 &  2 & 20
	\\
    3&    5 & 18 &  . &  . & 12 &
	. &  2 &  8 &  1 & 16 & 10 &  4 & 12 & 28 &  7 &  8 & 14
	\\
    4&     2 &  . &  8 &  4 &  4 &
	2 &  . &  . & 26 & 12 & 12 &  8 & 20 & 14 & 12 & 18 & 98
	\\
    5 &    . &  8 &  2 &  . & 20 &
	. &  1 &  6 &  2 &  8 & 14 &  4 &  9 & 38 &  4 & 12 & 41
	\\
    6&    . &  . &  . &  . &  8 &
	. &  4 &  . &  . &  . & 20 &  . &  8 &  . &  . &  . & 24
	\\
		\midrule
   \sum&   10 & 26 & 10 &  6 &  55 &
	2 &  7 & 14 & 35 & 36 & 64 & 18 & 53 & 82 & 25 & 40 & 198
	\end{array}
	$
\end{center}
\smallskip
From column $36$ we see that $198/729 \cong 27.16\%$ of these operads are nilpotent of index $3$.
Regularity implies $\mathrm{rank}(B) = 12$ but not conversely; column $12$ indicates that there are respectively
$1, 1, 2, 3$ relations for $q = 0, 1, 2, 5$ with $\mathrm{rank}(M) = 96$.
In these seven cases, the parameter values are the rows of the following matrix, and the last column gives the
multiplicities for the $S_4$-action on the nullspace of $M$:
\smallskip
\begin{center}
	$\begin{array}{|rrrrrr|c}
	0  &  0  &  0  &  0  &  0  &  0  &\qquad\qquad  [1,3,2,3,1] \\
	1  &  0  &  0  &  0  &  0  &  0  &\qquad\qquad  [1,3,2,3,1] \\
	1  &  1  &  0  &  0  &  0  &  0  &\qquad\qquad  [1,3,2,3,1] \\
	1  &  0  & -1  &  0  &  0  &  0  &\qquad\qquad  [1,3,2,3,1] \\
	1  &  1  & -1  &  1  & -1  &  0  &\qquad\qquad  [1,3,2,3,1] \\
	-1  &  1  &  1  & -1  &  0  &  1  &\qquad\qquad  [2,3,3,2,1] \\
	-1  & -1  & -1  & -1  &  0  & -1  &\qquad\qquad  [1,2,3,3,2]
	\end{array}$
\end{center}
\smallskip

\subsection{Koszul operads with one relation} \label{KOOR}

A question of Loday that we mentioned in Introduction still remains open:

\begin{problem}
	Which of the parametrized one-relation operads $\calO_\mathbf{x}$ are Koszul?
\end{problem}

Theorem \ref{th:isoclassification} of course implies that all regular parametrized one-relation
operads are Koszul, while Theorem \ref{th:niltheorem} easily implies that generic parametrized
one-relation operads are not Koszul.
The Hilbert series of an index $3$ nilpotent parametrized one-relation operad is
$f(t) = t + t^2 + t^3$;
the modified inverse series has negative coefficients:
\[
-f^{\langle-1\rangle}(-t) = t + t^2 + t^3 - 4t^5 - 14 t^6 - 30 t^7 - 33 t^8 + 55 t^9 + O(x^{10}).
\]
The Koszulness criterion of Ginzburg and Kapranov \cite{GK,LV} instantly implies that such an operad
cannot be Koszul.
Moreover, inspecting the list of $729$ parametrized one-relation operads with coefficients in
$\{ \, 0, \, \pm 1 \, \}$ from \S\ref{sec:rankdistr}, we discover that most of those operads are
not Koszul either because the modified inverse of the Hilbert series has negative coefficients, or
because the Hilbert series of the operad is not equal to the inverse of the modified Hilbert series of the Koszul dual operad
(which is, as we know, isomorphic to a parametrized one-relation operad). Among those $729$ operads, there
are just six irregular cases where the Koszulness cannot be disproved using the Ginzburg--Kapranov criterion.
Four of those,
\[
(a_1a_2)a_3=\pm a_1(a_2a_3) \qquad\text{and}\qquad  (a_1a_2)a_3=\pm a_1(a_3a_2),
\]
are Koszul and in fact have quadratic Gr\"obner bases for the (weighted) \texttt{pathdeglex} ordering \cite{DK}.
(We encountered two of those operads in Lemma \ref{basicMinfo}; notably, the corresponding $S_4$-modules
both have dimension 36 but are not isomorphic: the multiplicities are $[2,4,4,4,2]$ for the relation $(a_1a_2)a_3 = a_3(a_1a_2)$ and
$[1,5,2,5,1]$ for the relation $(a_1a_2)a_3 = -a_3(a_1a_2)$). Two remaining operads for which the Koszulness
remains an open question are
\[
(a_1a_2)a_3=\pm\big[a_1(a_2a_3)-a_1(a_3a_2)+a_2(a_1a_3)-a_2(a_3a_1)\big] + a_3(a_1a_2),
\]
one of which we saw as an excluded point of an otherwise regular family of parametrized one-relation operads in Remark~\ref{rem:5paramexceptional}.

\appendix

\section{Verification of results in \texttt{Magma}}

Our computer algebra system of choice for this project was \texttt{Maple}. Computations above use various tricks and divide-and-conquer methods designed to avoid using asking \texttt{Maple} to compute the radical of an ideal: at least in \texttt{Maple 18} which we were using at the crucial stage of this project the implementation of radical computation seemed to have some bugs (which seem to have been fixed in \texttt{Maple 2016}). As an independent verification, we used the \texttt{RadicalDecomposition} function of \texttt{Magma} \cite{Magma}, which appears to be extremely efficient even in the free online calculator
\begin{center}
\url{http://magma.maths.usyd.edu.au/calc/}
\end{center}
which limits the input to 50Kb and the calculation time to 120 seconds. We fed into that calculator the respective blocks $B(\lambda)$ \cite{BDadd} obtained by partial reduction of representation matrices (which were obtained through simple linear algebra over the rational field by a direct computation not involving any complicated \texttt{Maple} functions, and thus represented the ``fool-proof'' part of the computation), and requested the calculator to compute the following:
\begin{itemize}
\item all the upper determinantal ideals $DI_{r+1}(B(\lambda))$;
\item all the lower determinantal ideals $DI_r(B(\lambda))$;
\item the prime decomposition of the radical of the sum of the upper determinantal ideals;
\item the prime decompositions of the radicals of the five ideals obtained as sums of upper ideals for four out of five $\lambda$ and the lower ideal for the remaining choice of $\lambda$. 
\end{itemize}
(The simple \texttt{Magma} script that we used is given in the online addendum \cite{BDadd}.) This computation took less than five seconds, and the result obtained was as follows. 

\begin{theorem}
The zero set of the sum of the upper ideal has ten irreducible components:
\begin{gather}
\big\{[1-x_6,-x_5,x_6-1,x_5,x_5,x_6] \colon x_6^2=x_5^2+x_5 \big\} , \label{mcase1}\\
\big\{[1+x_6,1+x_6,x_5,x_5,x_5,x_6] \colon x_6^2=x_5^2+x_5\big\} , \label{mcase2}\\
\big\{[-x_4,-x_4,x_4,x_4,-1,0]\big\} ,\label{mcase3}\\
\big\{[-x_4,x_4,-x_4,x_4,1,0\big\} ,\label{mcase4}\\
\big\{[1,-x_5,x_5,-x_5,x_5,0]\big\} , \label{mcase5}\\
\big\{[0,0,0,0,x_5,0]\big\} ,\label{mcase6}\\
\big\{\left[\tfrac13,-\tfrac13,-\tfrac13,\tfrac13,-\tfrac23,-\tfrac13\right]\big\} ,\label{mcase7}\\
\big\{\left[\tfrac13,\tfrac13,\tfrac13,\tfrac13,\tfrac13,-\tfrac23,\tfrac13\right]\big\} ,\label{mcase8}\\
\big\{[0,0,0,0,0,-1]\big\} , \label{mcase9}\\
\big\{[0,0,0,0,0,1]\big\} . \label{mcase10}
\end{gather} 
The zero sets of the five ideals obtained as sums of upper ideals for four out of five $\lambda$ and the lower ideal for the remaining choice of $\lambda$ are as follows.
\begin{itemize}
\item for $\lambda=4$
\begin{gather}
\big\{\left[-x_4,x_4,-x_4,x_4,1,0\right]\big\} ,\label{bad4:1}\\
\big\{\left[\tfrac13,-\tfrac13,-\tfrac13,\tfrac13,\tfrac13,\tfrac23\right]\big\} ,\label{bad4:2}\\
\big\{[0,0,0,0,0,1]\big\} . \label{bad4:3}
\end{gather}
\item for $\lambda=31$
\begin{gather}
\big\{[-x_4,-x_4,x_4,x_4,-1,0]\big\} ,\label{bad31:1}\\
\big\{[-x_4,x_4,-x_4,x_4,1,0\big\} ,\label{bad31:2}\\
\big\{\left[\tfrac13,-\tfrac13,-\tfrac13,\tfrac13,-\tfrac23,-\tfrac13\right]\big\} ,\label{bad31:3}\\
\big\{[0,0,0,0,0,-1]\big\} . \label{bad31:4}
\end{gather}
\item for $\lambda=2^2$
\begin{gather}
\big\{\left[\tfrac13,-\tfrac13,-\tfrac13,\tfrac13,\tfrac13,\tfrac23\right]\big\} ,\label{bad22:1}\\
\big\{\left[\tfrac13,\tfrac13,\tfrac13,\tfrac13,\tfrac13,-\tfrac23\right]\big\} ,\label{bad22:2}\\
\big\{[0,0,0,0,1,0]\big\} ,\label{bad22:3}\\
\big\{[0,0,0,0,-1,0]\big\} ,\label{bad22:4}\\
\big\{[0,0,0,0,0,1]\big\} . \label{bad22:5}
\end{gather}
\item for $\lambda=21^2$
\begin{gather}
\big\{[-x_4,-x_4,x_4,x_4,-1,0]\big\} ,\label{bad211:1}\\
\big\{[-x_4,x_4,-x_4,x_4,1,0\big\} ,\label{bad211:2}\\
\big\{\left[\tfrac13,\tfrac13,\tfrac13,\tfrac13,\tfrac13,-\tfrac23,\tfrac13\right]\big\} ,\label{bad211:3}\\
\big\{[0,0,0,0,0,1]\big\} . \label{bad211:4}
\end{gather}
\item for $\lambda=1^4$
\begin{gather}
\big\{[-x_4,x_4,-x_4,x_4,1,0\big\} ,\label{bad1111:1}\\
\big\{\left[\tfrac13,\tfrac13,\tfrac13,\tfrac13,\tfrac13,\tfrac13,-\tfrac23\right]\big\} ,\label{bad1111:2}\\
\big\{[0,0,0,0,0,-1]\big\} . \label{bad1111:3}
\end{gather}
\end{itemize}
\end{theorem}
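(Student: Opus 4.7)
The statement is a computational assertion about the radical decomposition of explicit ideals in $\calC = \mathbb{F}[x_1,\ldots,x_6]$, so the ``proof'' I would present is the explicit execution of these symbolic computations together with enough setup to trust the output. The starting point is the five blocks $B(\lambda)$ produced by Algorithm \ref{partialsmithform} applied to the representation matrices $R_\lambda(\calG)$, whose entries in $\calC$ are collated in the online addendum \cite{BDadd}. For each partition $\lambda$ of $4$, the upper determinantal ideal $I_\lambda^+$ and the lower determinantal ideal $I_\lambda^-$ are as specified in Lemma \ref{reducedpropertyRconditions}; both are generated by explicit finite lists of minors of $B(\lambda)$ which can be produced directly by enumeration.

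For the first half of the theorem, I would form $\Sigma^+ := \sum_\lambda I_\lambda^+$ and invoke \texttt{Magma}'s \texttt{RadicalDecomposition} on it. The routine returns the list of minimal primes of $\sqrt{\Sigma^+}$, and I would compare them with the ten prime ideals cut out by the sets \eqref{mcase1}--\eqref{mcase10}. To verify each match, I would substitute the parametrization of each claimed component into a reduced Gr\"obner basis of $\Sigma^+$ and check that every generator vanishes identically; conversely, I would check that each prime returned by \texttt{Magma} has the same Krull dimension as, and contains the defining polynomials of, one of the listed varieties.

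For the second half, for each $\mu \in \{4,31,2^2,21^2,1^4\}$ I would form the mixed ideal $J_\mu := I_\mu^- + \sum_{\lambda \ne \mu} I_\lambda^+$ and again invoke \texttt{RadicalDecomposition}. Its zero set consists of those parameter values on which the upper rank condition holds for every $\lambda \ne \mu$ but the lower non-triviality condition fails for $\mu$---precisely the points where regularity in arity $4$ fails because of the $\mu$-isotypic component. The output should match the sets \eqref{bad4:1}--\eqref{bad1111:3}, and I would verify the matches by the same pair of containment checks as above.

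The main obstacle is computational: the $5 \times 5$ blocks $B(31)$ and $B(21^2)$ produce upper ideals whose generating sets of $3$-minors are large, and a naive primary decomposition over $\calC$ can be slow. However, \texttt{Magma}'s implementation handles ideals of this rigid structure very efficiently---the excerpt records that the entire verification finishes in under five seconds on the free online calculator---so no further tricks are required. As a robustness check, the ten components of $V(\Sigma^+)$ should partition into the regular parameter values classified in Theorem \ref{th:classification} together with those appearing in one of the five $V(J_\mu)$, cross-validating the \texttt{Magma} output against the independent divide-and-conquer \texttt{Maple} analysis of Section \ref{proofclassificationsection}.
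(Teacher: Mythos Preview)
Your proposal is correct and matches the paper's own approach essentially line for line: the theorem is stated as the raw output of feeding the blocks $B(\lambda)$ into \texttt{Magma}, forming the upper and lower determinantal ideals, and calling \texttt{RadicalDecomposition} on $\Sigma^+$ and on each mixed ideal $J_\mu$. Your added sanity checks (substituting the parametrizations back into a Gr\"obner basis of $\Sigma^+$ and comparing Krull dimensions) and the cross-validation against Section~\ref{proofclassificationsection} go slightly beyond what the paper spells out, but they are natural and only strengthen the verification.
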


The answer to our problem is obtained by removing from the first zero set the union of the remaining ones. First, we note the following:
\begin{itemize}
\item the component \eqref{mcase3} appears among the excluded ones as \eqref{bad31:1} and \eqref{bad211:1},
\item the component \eqref{mcase4} appears among the excluded ones as \eqref{bad4:1}, \eqref{bad31:2}, \eqref{bad211:2}, and \eqref{bad1111:1}, 
\item the component \eqref{mcase7} appears among the excluded ones as \eqref{bad31:3},
\item the component \eqref{mcase8} appears among the excluded ones as \eqref{bad211:3},
\item the component \eqref{mcase9} appears among the excluded ones as \eqref{bad31:4} and \eqref{bad1111:3}, 
\item the component \eqref{mcase10} appears  among the excluded ones as \eqref{bad4:3}, \eqref{bad22:5}, and \eqref{bad211:4}.
\end{itemize}

This means that we just need to examine which points are to be removed from the components \eqref{mcase1}, \eqref{mcase2}, \eqref{mcase5}, and \eqref{mcase6}. By a more careful inspection, we determine the following: 
\begin{itemize}
\item for the component \eqref{mcase1}, there are two points to be removed: the point corresponding to $x_5=\tfrac13$, $x_6=\tfrac23$ (it is the excluded component \eqref{bad4:2}, same as \eqref{bad22:1}) and the point corresponding to $x_5=-1$, $x_6=0$ (it corresponds to $x_4=-1$ in the excluded component \eqref{mcase3}),
\item for the component \eqref{mcase2}, there are two points to be removed: the point corresponding to $x_5=\tfrac13$, $x_6=-\tfrac23$ (it is the excluded component \eqref{bad22:2}, same as \eqref{bad1111:2}) and the point corresponding to $x_5=-1$, $x_6=0$ (it corresponds to $x_4=-1$ in the excluded component \eqref{mcase3}),
\item for the component \eqref{mcase5}, there is one point to be removed: the point corresponding to $x_5=1$ (it corresponds to $x_4=-1$ in the excluded component \eqref{mcase4})
\item for the component \eqref{mcase6}, there are two points to be removed: the point corresponding to $x_5=1$ (it is the excluded component \eqref{bad22:3}) and the point corresponding to $x_5=-1$ (it is the excluded component \eqref{bad22:4}).
\end{itemize}
By a direct inspection, this coincides with the set obtained in Theorem \ref{th:classification}, which completes the verification.


\begin{thebibliography}{9}

\bibitem{Al}
\textit{A.~A.~Albert},
Power-associative rings,
Trans. Amer. Math. Soc.
\textbf{64} (1948) 552--593.

\bibitem{Bergman}
\textit{G.~M.~Bergman},
The diamond lemma for ring theory,
Adv. in Math. \textbf{29} (1978), no. 2, 178--218.

\bibitem{Boo}
\textit{A.~Boocher},
Free resolutions and sparse determinantal ideals,
Math. Res. Lett.
\textbf{19} (2012), no.~4, 805--821.

\bibitem{Magma}
\textit{W.~Bosma}, \textit{J.~Cannon}, and \textit{C.~Playoust}, The Magma algebra system. I. The user language, 
J. Symbolic Comput., \textbf{24} (1997), 235--265.

\bibitem{BD}
\textit{M.~R.~Bremner} and \textit{V.~Dotsenko},
Algebraic Operads: An Algorithmic Companion,
in production by Chapman and Hall / CRC Press, 2016, xviii+365~pp. 

\bibitem{BDadd}
\textit{M.~R.~Bremner} and \textit{V.~Dotsenko},
Online addendum to the article ``Classification of regular parametrized one-relation operads''. Available via the URL \url{http://www.maths.tcd.ie/~vdots/BDaddendum.pdf}, alongside with the copy-pastable \texttt{Magma} script \url{http://www.maths.tcd.ie/~vdots/BDaddendum.txt}.

\bibitem{BMP}
\textit{M.~R.~Bremner}, \textit{S.~Madariaga}, and \textit{L.~A.~Peresi},
Structure theory for the group algebra of the symmetric group,
with applications to polynomial identities for the octonions, Preprint arXiv:1407.3810, 2014,
\href{http://arxiv.org/abs/1407.3810}{http://arxiv.org/abs/1407.3810}.

\bibitem{C}
\textit{J.~M.~Clifton},
A simplification of the computation of the natural representation of the symmetric group $S_n$,
Proc. Amer. Math. Soc.
\textbf{83} (1981), no.~2, 248--250.

\bibitem{CLO1}
\textit{D.~Cox}, \textit{J.~Little}, and \textit{D.~O'Shea},
Ideals, Varieties, and Algorithms:
	An Introduction to Computational Algebraic Geometry and Commutative Algebra,
3rd ed.,
%Undergraduate Texts in Mathematics.
Springer, New York, 2007.

\bibitem{CLO2}
\textit{D.~Cox}, \textit{J.~Little}, and \textit{D.~O'Shea},
Using Algebraic Geometry,
2nd ed.,
%Graduate Texts in Mathematics, 185.
Springer, New York, 2005.

\bibitem{DK}
\textit{V.~Dotsenko} and \textit{A.~Khoroshkin},
Gr\"obner bases for operads.
Duke Math. J.
\textbf{153} (2010), no.~2, 363--396.

\bibitem{Dri}
\textit{V.~G.~Drinfel'd},
On quadratic commutation relations in the quasiclassical case,
Selecta Math. Soviet.
\textbf{11} (1992), no.~4, 317--326.

\bibitem{GK}
\textit{V.~Ginzburg}, and \textit{M.~Kapranov}:
Koszul duality for operads.
Duke Math. J.
\textbf{76} (1994), no.~1, 203--272.

\bibitem{Liv}
\textit{M.~Livernet},
Dualit\'e de Koszul pour les op\'erades quadratiques binaires,
M.~Sc.~Thesis, Universit\'e de Strasbourg, 1995.

\bibitem{LL}
\textit{M.~Livernet} and \textit{J.-L.~Loday},
The Poisson operad as a limit of associative operads.
Preprint, March 1998.

\bibitem{LV}
\textit{J.-L.~Loday} and \textit{B.~Vallette},
Algebraic Operads,
Springer, Heidelberg, 2012.

\bibitem{MR1}
\textit{M. Markl} and \textit{E.~Remm}:
Algebras with one operation including Poisson and other Lie-admissible algebras,
J. Algebra
\textbf{299} (2006), no.~1, 171--189.

\bibitem{MR3}
\textit{M.~Markl} and \textit{E.~Remm},
(Non-)Koszulness of operads for $n$-ary algebras, galgalim and other curiosities.
J. Homotopy Relat. Struct. \textbf{10} (2015) no.~4, 939--969.

\bibitem{MR}
\textit{R. M. Mir\'o-Roig},
Determinantal Ideals,
Birkh\"auser Verlag, Basel, 2008.

\bibitem{PP}
\textit{A. Polishchuk, L. Positselski},
Quadratic algebras,
AMS, Providence, RI, 2005.

\bibitem{S}
\textit{R.~P.~Stanley},
Catalan Numbers,
Cambridge University Press, Cambridge, 2015.

\bibitem{Z}
\textit{G.~W.~Zinbiel},
Encyclopedia of types of algebras 2010,
in: Operads and Universal Algebra, 
World Sci. Publ., Hackensack, NJ, 2012. 217--297.

\end{thebibliography}
\end{document}